\title[{}]{Duality  via convolution of $\W$-algebras}
\author{Thomas Creutzig}
\address[T.C.]{Department Mathematik, FAU Erlangen–Nürnberg, Cauerstraße 11, 91058, Erlangen, Germany}
\email{thomas.creutzig@fau.de}
\author{Andrew R. Linshaw}
\address[A.L.]{Department of Mathematics, University of Denver}
\email{andrew.linshaw@du.edu}
\author{Shigenori Nakatsuka}
\address[S.N.]{Department Mathematik, FAU Erlangen–Nürnberg, Cauerstraße 11, 91058, Erlangen, Germany}
\email{shigenori.nakatsuka@fau.de}
\author{Ryo Sato}
\address[R.S.]{Center for General Education, Aichi Institute of Technology,
Yakusa-cho, Toyota 470-0392, Japan}
\email{rsato@aitech.ac.jp}
\definecolor{rouge}{rgb}{0.85,0.1,.4}
\definecolor{bleu}{rgb}{0.1,0.2,0.9}
\definecolor{violet}{rgb}{0.7,0,0.8}
\newtheorem{definition}{Definition}[section]
\newtheorem{proposition}[definition]{Proposition}
\newtheorem{theorem}[definition]{Theorem}
\newtheorem*{maintheorem}{Main Theorem}
\newtheorem{corollary}[definition]{Corollary}
\newtheorem{lemma}[definition]{Lemma}
\newtheorem{remark}[definition]{Remark}
\numberwithin{equation}{section}
\newcommand{\Z}{\mathbb{Z}}
\newcommand{\C}{\mathbb{C}}
\newcommand{\ag}{\mathfrak{a}}
\newcommand{\bg}{\mathfrak{b}}
\newcommand{\End}{\operatorname{End}}
\newcommand{\Hom}{\operatorname{Hom}}
\newcommand{\Span}{\operatorname{Span}}
\newcommand{\Com}{\operatorname{Com}}
\newcommand{\id}{\operatorname{id}}
\newcommand{\Ker}{\operatorname{Ker}}
\newcommand{\Coker}{\operatorname{Coker}}
\newcommand{\tr}{\operatorname{tr}}
\newcommand{\str}{\operatorname{str}}
\newcommand{\pd}{\partial}
\newcommand{\gr}{\operatorname{gr}}
\newcommand{\lbr}[2]{ {[} {#1} {}_\lambda {#2} {]}}
\newcommand{\lm}[1]{\lambda^{(#1)}}
\newcommand{\oo}[1]{o(\lambda^{#1})}
\newcommand{\oom}[1]{o(\mu^{#1})}
\newcommand{\Q}{\mathbb{Q}}
\newcommand{\g}{\mathfrak{g}}
\newcommand{\fd}{\mathfrak{d}}
\newcommand{\gl}{\mathfrak{gl}}
\newcommand{\sll}{\mathfrak{sl}}
\newcommand{\osp}{\mathfrak{osp}}
\newcommand{\symp}{\mathfrak{sp}}
\newcommand{\so}{\mathfrak{so}}
\newcommand{\op}[1]{\breve{#1}}
\newcommand{\tp}{{\scalebox{0.5}{$+$}}}
\newcommand{\tn}{{\scalebox{0.5}{$-$}}}
\newcommand{\tpn}{{\scalebox{0.5}{$\pm$}}}
\newcommand{\tnp}{{\scalebox{0.5}{$\mp$}}}
\newcommand{\A}{\mathfrak{A}^n}
\newcommand{\tran}{\hspace{0mm}^t}
\newcommand{\W}{\mathcal{W}}
\newcommand{\KVA}[1]{\mathfrak{A}^{{#1}}}
\newcommand{\cdo}{\mathcal{D}_{G,k}^{\mathrm{ch}}}
\newcommand{\NO}[1]{:\!#1\!:}
\newcommand{\ssqrt}[1]{\operatorname{\sqrt{\smash[b]{#1}}}}
\newcommand{\rel}[1]{H^{\frac{\infty}{2}+#1}_{\mathrm{rel}}}
\newcommand{\KL}{\mathbf{KL}}
\newcommand{\lp}{\operatorname{L}}
\newcommand{\semi}{\bigwedge\hspace{-0.5mm}^{\frac{\infty}{2}+\bullet}(\g)}
\newcommand{\semiinf}[2]{\bigwedge\hspace{-0.5mm}^{\frac{\infty}{2}+{#1}}(\g)_{#2}}
\newcommand{\relsemi}{\bigwedge\hspace{-0.5mm}^{\frac{\infty}{2}+\bullet}_{\hspace{1mm}\operatorname{rel}}}
\newcommand{\semicoh}[1]{H^{\frac{\infty}{2}+#1}}
\newcommand{\relsemicoh}[1]{H^{\frac{\infty}{2}+#1}_{\operatorname{rel}}}
\newcommand{\weyl}{\mathbb{V}}
\newcommand{\lpp}[1]{\mathrm{L}^{\!\overset{#1}{\hspace{2mm}}\!}}
\newcommand{\odd}[1]{\psi_{\scalebox{0.6}{$#1$}}}
\newcommand{\sign}[1]{\scalebox{0.6}{$#1$}}
\newcommand{\arxiv}[2]{\href{https://arxiv.org/abs/#1}{#2}}
\newcommand\doi[2]{\href{http://dx.doi.org/#1}{#2}}
\begin{document}
\maketitle

\begin{abstract}
Feigin-Frenkel duality is the isomorphism between the principal $\W$-algebras of a simple Lie algebra $\mathfrak{g}$ and its Langlands dual Lie algebra ${}^L\g$. A generalization of this duality to a larger family of $\W$-algebras called {\it hook-type} was recently conjectured by Gaiotto and Rap\v{c}\'ak and proved by the first two authors. It says that the affine cosets of two different hook-type $\W$-(super)algebras are isomorphic. A natural question is whether the duality between the affine cosets can be enhanced to reconstruct one side of the $\W$-algebra from the other. There is a convolution operation that maps a hook-type $\W$-algebra $\W$ to a certain relative semi-infinite cohomology of $\W$ tensored with a suitable kernel VOA. The first two authors conjectured previously that this cohomology is isomorphic to the Feigin-Frenkel dual hook-type $\W$-algebra. Our main result is a proof of this conjecture.
\end{abstract}

\section{Introduction}

Let $\g$ be a simple Lie algebra, or more generally, a basic-classical simple Lie superalgebra, and $V^k(\g)$ be the affine vertex superalgebra associated with $\g$ at level $k$. 
To any even nilpotent element $f$ in $\g$ one constructs from $V^k(\g)$ a corresponding $\W$-superalgebra, denoted by $\W^k(\g, f)$, via the quantum Drinfeld-Sokolov reduction associated with $f$ \cite{FF90b, KRW}. 
When $f$ is principal, the corresponding $\W$-superalgebras are commonly denoted by $\W^k(\g)$. The principal $\W$-superalgebras for Lie algebras $\g$ enjoy Feigin-Frenkel duality \cite{FF1}, that is, the isomorphisms
\[
\W^k(\g) \simeq \W^\ell({}^L\g), \qquad r^\vee(k+h^\vee)(\ell + {}^Lh^\vee) =1.
\]
Here, ${}^L\g$ is the Langlands dual of $\g$, $r^\vee$ the lacing number of $\g$, and $h^\vee$ (resp.\ ${}^Lh^\vee$) the dual Coxeter number of $\g$ (resp.\ ${}^L\g$). 
In the super case, a similar duality is known for $\g = \osp_{1|2n}$ with ${}^L\g=\osp_{1|2n}$ and $r^\vee=4$ \cite{CL2, G}. 
{Feigin-Frenkel duality plays a fundamental role in the quantum geometric Langlands program as the $\W$-algebra $\W^k(\g)$ describes the center of the affine Lie algebra $\widehat{\g}$ at the critical level, relating the center to local systems appearing on the other side, which is deformed for all levels at the same time \cite{Fr0}.}

{The hook-type $\W$-superalgebras are $\W$-superalgebras studied in \cite{CL1,CL2, GR} and will be denoted by 
$$\W^k_{A^\pm}(n,m),\quad \W^k_{B^\pm}(n,m),\quad \W^k_{C^\pm}(n,m),\quad \W^k_{D^\pm}(n,m),\quad \W^k_{O^\pm}(n,m)$$
in this paper where $n\geq1,m\geq0$ for $X=A$ and $n\geq0,m\geq0$ for the other cases, 
see Table \ref{tab: hopok-type Walg-intro}.} (They appear as $\W^k(\g, f_\ag)$ in \cite{CL1,CL2}, but we use a different notation to present our result uniformly for all types.) 
{In type $A$, for example, they can be regarded as a family of $\W$-algebras interpolating the principal $\W$-algebras as $\W^k_{A^+}(n,0)$ and the affine vertex algebras as $\W^k_{A^+}(1,m)$.
The name ``hook-type" comes from the shape of the Young diagrams associated with the partitions for the nilpotent elements (e.g.\ $[n,1^m]$ for $\W^k_{A^+}(n,m)$).}
These $\W$-superalgebras are parameterized by embeddings $\ag\oplus\bg\subset \g$ with nilpotent element taken to be principal inside $\ag$ so that 
$$\g\simeq
\begin{cases}
(\ag\oplus \bg)\oplus \rho_\ag\otimes \rho^\dagger_\bg\oplus \rho^\dagger_\ag\otimes \rho_\bg & \text{if}\ \g=\sll_{N|M},\\
(\ag\oplus \bg)\oplus \rho_\ag\otimes \rho_\bg& \text{if}\ \g=\osp_{N|2M},
\end{cases}$$
holds where $\rho_\ag$ and $\rho^\dagger_\ag$ (resp.\ $\rho_\bg$ and $\rho^\dagger_\bg$) are the natural representation of $\ag$ (resp.\ $\bg$) and its dual. 
Note that $\bg$ is the centralizer of $\ag\subset \g$ in all cases, and $\bg$ is of type $A, B, C, D, O$ with $m$ representing the rank in our notation. It generates a (usually maximal) affine vertex subalgebra $V^{k_\bg}(\bg)$ at a certain level $k_\bg$ in the $\W$-superalgebra.
\renewcommand{\arraystretch}{1.2}
\begin{table}[h]
    \caption{Hook-type $\W$-superalgebras}
    \label{tab: hopok-type Walg-intro}
    \centering
\begin{tabular}{|cccc|}
\hline
Name for $\W^k(\g, f_\ag)$ & $\g$ & $\ag$ & $\bg$  \\ \hline  
$\W^k_{A^+}(n,m)$ & $\sll_{n+m}$ & $\sll_n$ & $\gl_m$  \\ 
$\W^k_{B^+}(n,m)$ & $\so_{2(n+m+1)}$          & $\so_{2n+1}$            & $\so_{2m+1}$    \\
$\W^k_{C^+}(n,m)$ &$\symp_{2(n+m)}$&$\symp_{2n}$ & $\symp_{2m}$ \\ 
$\W^k_{D^+}(n,m)$ & $\so_{2(n+m)+1}$&$\so_{2n+1}$&  $\so_{2m}$ \\ 
$\W^k_{O^+}(n,m)$ & $\osp_{1|2(n+m)}$ & $\symp_{2n}$ & $\osp_{1|2m}$  \\ \hline 
$\W^k_{A^-}(n,m)$ & $\sll_{n+m|m}$ &$\sll_{n+m}$ &$\gl_m$    \\
$\W^k_{B^-}(n,m)$ &$\osp_{2m+1|2(n+m)}$ &$\symp_{2(n+m)}$ &$\so_{2m+1}$ \\
$\W^k_{C^-}(n,m)$ & $\osp_{2(n+m)+1|2m}$     & $\so_{2(n+m)+1}$    &$\symp_{2m}$  \\ 
$\W^k_{D^-}(n,m)$ & $\osp_{2m|2(n+m)}$& $\symp_{2(n+m)}$& $\so_{2m}$ \\
$\W^k_{O^-}(n,m)$ & $\osp_{2(n+m)+2|2m}$ & $\so_{2(n+m)+1}$ & $\osp_{1|2m}$  \\ \hline
\end{tabular}
\end{table}
\renewcommand{\arraystretch}{1}

Davide Gaiotto and Miroslav Rap\v{c}\'ak studied vertex algebras at the two-dimensional corners in four-dimensional quantum field theories \cite{GR}. 
The vertex algebras appearing in their work are called vertex algebras at the corner.
{They are the subalgebras of $\W^k_{X^\pm}(n,m)$ consisting of elements commuting with $V^{k_\bg}(\bg)$:
$$C^k_{X^\pm}(n,m)=\Com(V^{k_\bg}(\bg),\W^k_{X^\pm}(n,m)).$$
The $S_3$-symmetry of the theories led them to conjecture a remarkable family of {\it trialities} among these vertex algebras, i.e., isomorphisms of three such algebras. These isomorphisms fall into two types. One type generalizes Feigin-Frenkel duality, namely 
\begin{align}\label{gen FF duality}
    C^k_{X^+}(n,m)\simeq C^\ell_{Y^-}(n,m)
\end{align}
for the pairs $(X, Y)=(A, A), (B, O), (C, C), (D, D), (O, B)$ and the levels $(k,\ell)$ satisfying the relation in \eqref{duality rel}.}

The other type generalizes the following coset realizations of the principal $\W$-algebras $\W_{X^+}^k(n,0)$ \cite{ACL, CL2}
\begin{align*}
{\W_{X^+}^k(n,0)} &\simeq \Com\left(  V^\ell(\g), V^{\ell-1}(\g) \otimes L_1(\g)\right)\quad (X=A,D),\quad  & \frac{1}{k+h^\vee_1} + \frac{1}{\ell + h^\vee_2} = 1,  \\ 
{\W_{C^+}^k(n,0)} &\simeq \Com\left(  V^\ell(\symp_{2n}), V^{\ell}(\osp_{1|2n}) \right), & \frac{1}{k+h^\vee_1} + \frac{1}{\ell + h^\vee_2} = 2, \\
{\W_{O^+}^k(n,0) }&\simeq \Com\left(  V^\ell(\so_{2n+1}), V^{\ell-1}(\so_{2n+1}) \otimes F^{2n+1} \right), & \frac{1}{k+h^\vee_2} + \frac{2}{\ell + h^\vee_2} = 2.
\end{align*}
Here, $L_1(\g)$ in the first isomorphism is the simple quotient of $V^{1}(\g)$, $F^{2n+1}$ in the last is the vertex superalgebra of $2n+1$ free fermions, and $h^\vee_\mathrm{1}$ (resp.\ $h^\vee_2$) denotes the dual Coxeter number of the Lie (super)algebra appearing in the left-hand side (resp.\ the right-hand side), namely $\g$, $\symp_{2n}$, $\osp_{1|2n}$ (resp.\ $\g$, $\osp_{1|2n}$, $\so_{2n+1}$). {Note that the ``missing" $\W_{B^+}^k(n,0)$ actually appears in the second isomorphism by Feigin-Frenkel duality. }
 
The triality conjecture was recently proven by the first two authors in \cite{CL1, CL2} up to taking $\Z_2$-orbifolds by outer automorphisms depending on the pairs $(X, Y)$. 
{As a next step, one might ask what the generalized Feigin-Frenkel duality \eqref{gen FF duality} can tell for the representation theory of the $\W$-superalgebras themselves.
It is already a non-trivial question beyond the original case as the duality \eqref{gen FF duality} is not for the $\W$-superalgebras themselves. Indeed, one is asked to enhance the duality \eqref{gen FF duality} to some reconstruction of the $\W$-superalgebras from one to the other. 
Such an example is established by two of the authors with N. Genra \cite{CGN}
for the pairs $\W^k_{X^+}(n,1)$ and $\W^\ell_{X^-}(n,1)$ ($X=A, D$), which led to a block-wise equivalence of their representation categories \cite{CGNS, FN23, FS}.
In this case, the reconstruction is realized as the coset construction  
\begin{align}\label{KS coset}
\W^k_{X^+}(n,1)\simeq \Com(\pi, \W^\ell_{X^-}(n,1)\otimes V_{\ssqrt{-1}\Z}),\quad \W^\ell_{X^-}(n,1)\simeq \Com(\pi, \W^k_{X^+}(n,1)\otimes V_{\Z})
\end{align}
by using the lattice vertex superalgebras $V_{\ssqrt{\pm1}\Z}$ and a Heisenberg vertex algebra $\pi$.}

In this paper, we establish the reconstruction theorem of the hook-type $\W$-superalgebras in the higher-rank cases as conjectured in \cite[Conjecture 1.2]{CL2}. 
In this generality, we need to reformulate \eqref{KS coset} since the naive coset construction does not work in the higher-rank case. 
A correct reformulation is the relative semi-infinite cohomology \cite{Fe, FGZ} against certain gluing objects.
(Interestingly, similar ideas appear in other situations; see e.g. \cite{Ara, CFL}.) 
{The basic idea here is to apply the $T$-duality \cite{CG, FG} for $\W^k_{X^+}(n,m)$ viewed as a vertex (super)algebra object in the Kazhdan--Lusztig category for the affine subalgebras $V^{k_\bg}(\bg)$ to obtain a new one, which will be identified with $\W^\ell_{Y^-}(n,m)$ sitting in the other side of the category by utilizing a certain uniqueness on the vertex algebraic structure.}
More concretely, we use, as gluing objects, vertex superalgebra extensions of two affine vertex algebras by Weyl modules of the form
\begin{align}\nonumber 
\KVA{N}[\g,k]=\bigoplus_{\lambda\in R}\weyl_\g^k(\lambda)\otimes \weyl_{\op{\g}}^{\op{k}}(\op{\lambda}^\dagger)\supset V^k(\g)\otimes V^{\op{k}}(\op{\g})
\end{align}
where levels $(k,\op{k})$ are related by the label $N\in \Z$; see Section \ref{sec Kernel VOA} for notation and details.
They are called the quantum geometric Langlands kernel vertex superalgebras in \cite[Section 7.3]{CG}, where their existence was conjectured. Later, they were constructed for irrational levels $k$ by Yuto Moriwaki \cite{M}. 
When $N=0$ and $\g=\op{\g}$, they are realized as the algebras of chiral differential operators over the corresponding reductive groups \cite{AG, GMS1, GMS2} and thus $\KVA{N}[\g,k]$ are regarded as ``twisted" versions of them.
Depending on $(X, Y)=(A, A), (C, C), (D, D)$, we consider the cases $\g= \gl_m, \so_{2m}, \mathfrak{sp}_{2m}$. 
For the remaining cases $(X,Y)=(B,O)$, $(O,B)$, we need a variant of $\KVA{N}[\g,k]$ extending $V^k(\so_{2m+1}) \otimes V^{\op{k}}(\osp_{1|2m})$, whose existence is known only for the case $m=1$ (see Section \ref{sec Kernel VOA}).
In this paper, we \emph{assume} the existence in these cases since the proof of the Main theorem below applies uniformly. 
To reconstruct the hook-type $\W$-superalgebras, we use the relative semi-infinite cohomology \cite{Fe, FGZ}, which satisfies the property
\begin{align*}
    \relsemicoh{m}\left( \g, \weyl_{\g}^k(\lambda)\otimes \weyl_{\g}^{\op{k}}(\mu^\dagger)\right)\simeq \delta_{m,0}\delta_{\lambda,\mu}
\end{align*}
for levels $(k,\op{k})$ satisfying $k+\op{k}=-2h^\vee$, see also Section \ref{Relative semi-infinite cohomology}.

Now, suppose $V$ is a vertex (super)algebra extension of $V^{k}(\g)$ at level $k$ lying in the Kazhdan--Lusztig category. 
By using the gluing object $\KVA{N}[\g,\op{k}]$, one obtains a new vertex superalgebra
\begin{align*}
   \check{V}= \relsemicoh{m}\left( \g, V\otimes \KVA{N}[\g,k]\right),
\end{align*}
which we call the convolution operation for $V$ with respect to $\KVA{N}[\g,k]$.
{The new superalgebra $\check{V}$ is slightly different from $V$.
Indeed, the affine subalgebra $V^{k}(\g)$ is replaced with $V^{\Breve{k}}(\op{\g})$, which sometimes changes the super-structure, that is, $\check{V}$ might be a vertex superalgebra even if $V$ is a vertex algebra. 
However, it is natural to expect that $V$ and $\check{V}$ share a similar algebraic structure as well as their representation categories.
The main theorem of the paper asserts that the pairs of hook-type $\W$-superalgebras satisfying the duality \eqref{gen FF duality} are indeed in such a situation.}

\begin{maintheorem} \textup{(Theorem \ref{Main results})}
For $k,\ell \in \C\backslash \Q$ under $r_X(k+h^\vee_{X^+})(\ell+h^\vee_{Y^-})=1$, there exist isomorphisms of vertex superalgebras
\begin{align}
\nonumber  &\W^\ell_{Y^-}(n,m)\simeq \relsemicoh{0}\left(\bg, \W^k_{X^+}(n,m)\otimes \KVA{1}[\bg,\alpha_\tp]\right),\\
 \nonumber &\W^k_{X^+}(n,m)\simeq \relsemicoh{0}\left(\bg, \W^\ell_{Y^-}(n,m)\otimes \KVA{-1}[\bg,\alpha_\tn]\right),
\end{align}
where $(\alpha_\tp,\alpha_\tn)$ are given by \eqref{alpha-index}. 
\end{maintheorem}
\noindent
{As an immediate application, we recover the duality \eqref{gen FF duality} by taking the affine cosets on both sides, which improves the previous restriction of taking $\Z_2$-orbifolds (Corollary \ref{duality theorem without orbifold}).}

One of the advantages of the convolution operation is the existence of the following obvious functor 
\begin{align*}
    V \text{-mod} \times \KVA{N}[\g,k]\text{-mod} \rightarrow \check{V} \text{-mod},\quad (M,N) \mapsto  \relsemicoh{0}(\g , M \otimes N).
\end{align*}
on the level of representation category. One might expect this functor to induce an equivalence of categories when restricted to appropriate full subcategories.
{For the $\W$-superalgebras appearing in the Main Theorem, the most fundamental functors are
\begin{align*}
    \mathbb{H}^\tp(M):= \relsemicoh{0}\left(\bg, M\otimes \KVA{1}[\bg,\alpha_\tp]\right),\quad \mathbb{H}^\tn(N):= \relsemicoh{0}\left(\bg, N \otimes \KVA{-1}[\bg,\alpha_\tn]\right).
\end{align*}
Then, one expects that these functors induce an equivalence of categories 
\begin{align}\label{equivalence}
   \mathbb{H}^\tp \colon \KL\left(\W^k_{X^+}(n,m)\right)  \simeq \KL\left(\W^\ell_{Y^-}(n,m)\right) \colon \mathbb{H}^\tn
\end{align}
between the category of $\W^k_{X^+}(n,m)$-modules which are in the Kazhdan-Lusztig category as $V^{k_\bg}(\bg)$-modules and the category of $\W^\ell_{Y^-}(n,m)$-modules similarly defined. 
Indeed, the block-wise equivalence between the representation categories for $\W^k_{X^+}(n,1)$ and $\W^\ell_{Y^-}(n,1)$ in \eqref{KS coset} was obtained in this way (``the block-wise" means that we may relax the conditions in \eqref{equivalence} to obtain more equivalences) \cite{CGNS, FN23}.
Moreover, one may compare further structures on the categories, such as the braided tensor structure, by establishing functorial isomorphisms on the space of intertwining operators.
This implies that the above equivalence \eqref{equivalence} should be a braided tensor equivalence if the intertwining operators induce the braided tensor structure in the sense of Huang--Lepowsky--Zhang (see \cite{M} and references therein). We refer \cite{C2} for a recent work in this direction.
After the first version of the present article was submitted, we were informed by G. Dhillon in a private communication that the equivalence \eqref{equivalence} can be interpreted as a vertex algebraic version of the mirabolic Satake correspondence \cite{BFGT, BFT1, BFT2}, which asserts an equivalence between the category of (certain) equivariant twisted $\mathscr{D}$-modules over the affine Grassmaian (appearing in the left-hand side) and the Kazhdan--Lusztig category of affine vertex superalgebra (appearing in the right-hand side).}
From the point of view of physics, it is natural to further improve the equivalence in terms of the conformal blocks or correlation functions in the spirit of the quantum geometric Langlands correspondence; see \cite{CHS, CH1, CH2, CH3} for such evidence based on the path integral approach for conformal field theory.
We aim to pursue this direction further and are currently studying the behavior of characters under our convolution operation.  

\noindent{\bf Acknowledgements} We thank Gurbir Dhillon, Naoki Genra and Yuto Moriwaki for very helpful discussions. We thank the anonymous referee for careful reading of the draft.
T.C. is supported by NSERC Grant Number RES0048511. A.L. is supported by Simons Foundation Grant 635650 and NSF Grant DMS-2001484. 
S.N. is supported by JSPS KAKENHI Grant Number 20J10147. 
This work was supported by World Premier International Research Center Initiative (WPI Initiative), MEXT, Japan.

\section{Kernel vertex superalgebras}
\subsection{Affine vertex superalgebras}\label{affine vertex algebra}
Let $\g=\g_{\bar{0}}\oplus \g_{\bar{1}}$ be a finite-dimensional Lie superalgebra equipped with an even supersymmetric invariant bilinear form $\kappa=(\cdot| \cdot)$. 
The affine vertex superalgebra $\widehat{\g}_\kappa$ is the central extension 
$$0\rightarrow \C K \rightarrow \widehat{\g}_\kappa \rightarrow \g[t^{\pm1}]\rightarrow 0 $$
of the loop superalgebra $\g[t^{\pm1}]=\g\otimes \C[t^{\pm1}]$ with Lie superbracket 
$$[X_n,Y_m]=[X,Y]_{n+m}+n(X|Y)\delta_{n+m,0}K$$
where we set $X_n:=Xt^n$ ($X \in \g, n \in \Z$).  
Consider the $\widehat{\g}$-module $V^\kappa(\g):=\mathrm{Ind}^{\widehat{\g}_\kappa}_{\g[t]\oplus \C K}\C$ induced from the one-dimensional representation $\C$, on which $\g[t]$ acts by zero and $K$ by identity.
It has a structure of vertex superalgebra, called the universal affine vertex superalgebra associated with $\g$ at level $\kappa$. As a vertex superalgebra, it is generated by the fields $Y(X_{-1}\mathbf{1},z):=X(z)=\sum_{n\in \Z}X_{n}z^{-n-1}$ ($X\in \g$) which satisfy the OPEs
$$X(z)Y(w)\sim \frac{[X,Y](w)}{(z-w)}+\frac{(X|Y)}{(z-w)^2}, \quad (X,Y\in \g),$$
or equivalently the $\lambda$-bracket \cite{DK}
\begin{align}\label{example of lambda bracket}
[X_\lambda Y]=[X,Y] +(X|Y)\lambda.
\end{align}
The vertex superalgebra $V^\kappa(\g)$ is $\Z_{\geq0}$-graded by setting $\Delta(X^1_{n_1}\cdots X^m_{n_m}\mathbf{1})=-\sum_{p=1}^m n_p$, ($X^i\in \g$). 
We denote by $\mathbf{KL}_\kappa(\g)$ the Kazhdan--Lusztig category of $V^\kappa(\g)$-modules, that is, the category consisting of all the ($\Z_2$-graded) positively-graded $V^\kappa(\g)$-modules which are integrable as $\g$-modules.

\subsection{Tensor representations}\label{tensor representation}
Here, we recollect and fix the notation of tensor representations of Lie superalgebras 
$\g=\gl_{m}, \sll_{m}, \so_{m}, \symp_{2m}, \osp_{1|2m}.$

We first consider the case when $\g$ is simple. Let $\g=\mathfrak{n}_+\oplus \mathfrak{h}\oplus \mathfrak{n}_-$ be the triangular decomposition, $\{\alpha_i\}_{i=1}^{\mathrm{rank}\g}\subset \mathfrak{h}^*$ the set of simple roots, and $\{\varpi_i\}_{i=1}^{\mathrm{rank}\g}$ the set of fundamental weights. 
We set $P=\oplus_i \Z\varpi_i$ (resp.\ $Q=\oplus_i \Z \alpha_i$) the weight lattice (resp.\ the root lattice). Then we denote by $P_+=\oplus_i\Z_{\geq0}\varpi_i$ the set of dominant integral weights and by $L_\lambda$, ($\lambda\in P_+$), the simple $\g$-module generated by highest weight vector of \emph{even parity} and of highest weight $\lambda$. In particular, $L_{\varpi_1}$ corresponds to the natural representation $\rho_\g$, which is $\C^m$ (resp.\ $\C^m$, $\C^{2m}$, $\C^{1|2m}$) when $\g=\sll_m$ (resp.\ $\so_m$, $\symp_{2m}$, $\osp_{1|2m}$). For $\lambda\in P_+$, let $\lambda^\dagger$ denote the highest weight for the dual representation $\Hom_{\C}(L_\lambda,\C^{1|0})$. By using the Weyl group $W$ of $\g$ and its longest element $w_{0}$, we have $\lambda^\dagger=-w_0\lambda$.
\begin{lemma}
For $\lambda\in P_+$, $\g$-module isomorphisms $L_{\lambda^\dagger}\simeq \Hom_\C(L_\lambda,\C^{1|0})$ are even.
\end{lemma}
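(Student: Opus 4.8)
The plan is to reduce everything to a single parity computation, namely that the lowest weight vector of $L_\lambda$ is even. First I would record the standard structural fact that in a highest weight module over $\g$ with a fixed triangular decomposition, a weight space $L_\lambda[\mu]$ is homogeneous of parity $\sum_{i:\,\alpha_i\in\g_{\bar 1}} d_i \bmod 2$, where $\lambda-\mu=\sum_i d_i\alpha_i$; this holds because every weight-$\mu$ vector is a sum of terms, each a product of negative simple root vectors $f_i$ with the multiplicities of $f_i$ summing to $d_i$, and $f_i$ has parity $|\alpha_i|$. Applying this at the lowest weight $\mu=w_0\lambda$: for the ordinary Lie algebras $\gl_m,\sll_m,\so_m,\symp_{2m}$ there are no odd simple roots, so all of $L_\lambda$ is even and there is nothing to prove. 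For $\g=\osp_{1|2m}$ the unique odd simple root is $\alpha_m=\epsilon_m$ and $w_0=-\id$, so $\lambda-w_0\lambda=2\lambda$; writing the dominant weight as $\lambda=\sum_j\lambda_j\epsilon_j$ with $\lambda_j\in\Z_{\geq 0}$ and using $\epsilon_j=\alpha_j+\alpha_{j+1}+\dots+\alpha_m$, the coefficient of $\alpha_m$ in $2\lambda$ equals $2\sum_j\lambda_j$, which is even. Hence the lowest weight vector $v_-\in L_\lambda$ is even in all cases.

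Next I would pass to the dual. The module $L_\lambda^*:=\Hom_\C(L_\lambda,\C^{1|0})$ is simple with one-dimensional highest weight space of weight $-w_0\lambda=\lambda^\dagger$, spanned by the functional $\xi$ dual to $v_-$ (so $\xi(v_-)=1$ and $\xi$ vanishes on the other weight spaces). Since $\C^{1|0}$ is purely even, $|\xi|=|v_-|=\bar 0$, so $\xi$ is an even highest weight vector of $L_\lambda^*$; therefore the $\g$-module homomorphism $L_{\lambda^\dagger}\to L_\lambda^*$ determined by sending the (even, by our convention) highest weight vector $v_{\lambda^\dagger}$ to $\xi$ is well defined, bijective, and even. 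This exhibits \emph{an} even isomorphism.

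Finally I would upgrade this to \emph{every} isomorphism. Because $\mathfrak h\subset\g_{\bar 0}$, any $\g$-module homomorphism $\Phi\colon L_{\lambda^\dagger}\to L_\lambda^*$ preserves weight spaces, so $\Phi$ carries the one-dimensional, purely even space $(L_{\lambda^\dagger})_{\lambda^\dagger}=\C v_{\lambda^\dagger}$ into the one-dimensional, purely even space $(L_\lambda^*)_{\lambda^\dagger}=\C\xi$; an odd $\Phi$ would then send the even vector $v_{\lambda^\dagger}$ into the odd part of $\C\xi$, which is $0$, forcing $\Phi$ to vanish on a generator of the simple module $L_{\lambda^\dagger}$, hence $\Phi=0$. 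Thus there is no nonzero odd $\g$-homomorphism $L_{\lambda^\dagger}\to L_\lambda^*$; combined with Schur's lemma this shows the space of $\g$-homomorphisms is one-dimensional and purely even, so every isomorphism is a nonzero scalar multiple of the even one constructed above, hence even. The only step carrying genuine content is the parity count for $\osp_{1|2m}$, and the one subtlety there is to use that the dominant weights of $\osp_{1|2m}$ are the integer (not half-integer) partitions $\lambda_1\geq\dots\geq\lambda_m\geq 0$ — unlike $\mathfrak{so}_{2m+1}$ — which is exactly what makes the $\alpha_m$-coefficient of $2\lambda$ come out even.
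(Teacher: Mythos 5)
Your proof is correct, and its overall strategy coincides with the paper's: everything reduces to showing that the lowest weight vector of $L_\lambda$ over $\osp_{1|2m}$ is even (the purely even Lie algebras being trivial). Where you diverge is in how that single parity is established. The paper branches $L_\lambda$ to the even subalgebra $\symp_{2m}$, observes that every constituent has highest weight dominated by $\lambda$, and concludes that the lowest weight of $L_\lambda$ lives in the top constituent $L_\lambda(\symp_{2m})$, which is generated from the even highest weight vector by the even subalgebra and is therefore even. You instead use the $\Z_2$-grading of weight spaces by the $\alpha_m$-height of $\lambda-\mu$ and compute directly that the $\alpha_m$-coefficient of $\lambda-w_0\lambda=2\lambda$ equals $2\sum_j\lambda_j$, which is even; this is a more elementary, purely combinatorial route that correctly isolates the one place where the integrality of $\osp_{1|2m}$-dominant weights (as opposed to the half-integer weights allowed for $\so_{2m+1}$) enters. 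You also supply two things the paper leaves implicit: the identification of the dual functional $\xi$ as an \emph{even} highest weight vector of $\Hom_\C(L_\lambda,\C^{1|0})$, and the Schur-type argument showing that \emph{every} isomorphism, not merely the one you construct, is even because the $\lambda^\dagger$-weight spaces on both sides are one-dimensional and purely even, so a nonzero odd homomorphism cannot exist. Both approaches are sound; yours trades the representation-theoretic branching input for an explicit root computation and is somewhat more self-contained.
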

\begin{proof}
It suffices to prove the case $\g=\osp_{1|2m}$. Decompose $L_\lambda$ as a $\symp_{2m}$-module: $L_\lambda|_{\symp_{2m}}\simeq \bigoplus_\mu n_{\lambda,\mu}L_\mu(\symp_{2m})$. Since the Cartan subalgebras of $\osp_{1|2m}$ and $\symp_{2m}$ coincide, $n_{\lambda,\mu}$ is non-zero only if $\lambda$ dominates $\mu$. Hence, the lowest weight of $L_\lambda$ coincides with that of $L_\lambda(\symp_{2m})$.
It follows that the lowest weight subspace, say $W$, is even, as the highest weight subspace is even by definition. {Now, the highest weight subspace of $\Hom_\C(L_\lambda,\C^{1|0})$ as a $\g$-module is $\Hom_\C(W,\C^{1|0})$ and thus of even parity. This completes the proof.}
\end{proof}
\noindent
Tensor representations of $\g$ are, by definition, modules appearing as submodules in the tensor algebra $T^\bullet (L_{\varpi_1}\oplus L_{\varpi_{1}^\dagger})$. We denote by $R\subset P_+$ the set of their highest weights. The condition is given in Table \ref{tab: cong condition} below. 
Note that when $\g\neq \sll_m$:  $L_{\lambda^\dagger}\simeq L_{\lambda}$ holds for all $\lambda\in R$ since $\varpi_1^\dagger=\varpi_1$ and that we have the following one-to-one correspondence 
\begin{align}\label{BO identification}
    R(\osp_{1|2m})\xrightarrow{\simeq} R(\so_{2m+1}),\quad \lambda=\sum_{i=1}^m\lambda_i \varpi_i\mapsto \op{\lambda}:= \sum_{i=1}^{m-1}\lambda_i \varpi_i+2\lambda_m \varpi_m
\end{align}
of highest weights under which the characters are identified, and thus, the branchings of tensor products coincide \cite{C, RS}.
By abuse of notation, we also denote the inverse of \eqref{BO identification} by $\lambda \mapsto \op{\lambda}$.

Next, we consider the case $\g=\gl_m$. We decompose $\gl_m=\sll_m\oplus \C \mathrm{Id}$ as usual and set the set of fundamental weights $\{\varpi_i\}_{i=1}^{m}$ so that $\{\varpi_i\}_{i=1}^{m-1}$ corresponds to that of $\sll_m$ and $\varpi_m$ to the dual of $\mathrm{Id}$.
Then we set $P_+=\oplus_{i=1}^{m-1}\Z_{\geq0}\varpi_i\oplus \Z \varpi_m$ and use the same notation as before.  
In particular, the natural representation $\rho_{\gl_m}=\C^m$ and its dual $\rho_{\gl_m}^\dagger=\overline{\C}^m$ correspond to $L_{\varpi_1+\varpi_m}$ and  $L_{\varpi_{m-1}-\varpi_m}$ respectively.

In Table \ref{tab: cong condition} below, we describe the set $R$ in terms of the coefficients $\lambda_i$ of the weights $\lambda= \sum \lambda_i \varpi_i$, {which is well-known for Lie algebras $\g$, e.g.\ \cite{FH}.
The condition for $\g=\osp_{1|2m}$ is obtained by using \eqref{BO identification}}.

\renewcommand{\arraystretch}{1.2}
\begin{table}[h]
    \caption{Condition for $R$}
    \label{tab: cong condition}
    \centering
    \begin{tabular}{|cc||cc| }
\hline
$\g$ & Condition &\emph{Cont'd} &  \\ \hline
$\gl_m$ &  $\lambda_{m}\equiv\sum_{i=1}^{m-1}i\lambda_i$ $\mod m$ & $\symp_{2m}$ & $\varnothing$\\ 
$\sll_{m}$ &$\varnothing$ &$\so_{2m}$ & $\lambda_{m-1}+\lambda_m\equiv 0$ $\mod 2$\\
$\so_{2m+1}$ & $\lambda_m\equiv 0$ $\mod 2$& $\osp_{1|2m}$ & $\varnothing$ \\ \hline
    \end{tabular}
\end{table}
\renewcommand{\arraystretch}{1}
Below, we will consider the following pairs $(\g, \op{\g})$ of Lie superalgebras 
$$(\gl_m,\gl_m),\quad (\so_{2m+1},\osp_{1|2m}),\quad (\symp_{2m},\symp_{2m}),\quad (\so_{2m},\so_{2m}),\quad (\osp_{1|2m}, \so_{2m+1}).$$
In the below, we will use the notation $\lambda\leftrightarrow \op{\lambda}$ in \eqref{BO identification} for all pairs by setting $\op{\lambda}=\lambda$ for the remaining cases.
 
\subsection{Kernel vertex superalgebras}\label{sec Kernel VOA}
Let us consider $\g$ as in \S 2.2, set $\kappa=k \kappa_0$ in terms of the normalized invariant bilinear form $\kappa_0$ on $\g$, and use $k$ instead of $\kappa$, for convenience, to express the level, say, $V^k(\g)$ instead of $V^{\kappa}(\g)$.
From now on, we assume $k\in \C\backslash \Q$ so that the Kazhdan--Lusztig category $\mathbf{KL}_k(\g)$ is semisimple. The simple modules are provided by the Weyl modules 
$$\mathbb{V}^k_{\g}(\lambda)=\mathrm{Ind}_{\g[t]\oplus \C K}^{\widehat{\g}_k}L_\lambda,\quad (\lambda\in P_+),$$ 
which will be denoted by $\mathbb{V}^k_\lambda$ when $\g$ is clear.
{As mentioned above in the Introduction, a key role will be played by
the following ind-object in the Deligne product $\mathbf{KL}_k(\g)\boxtimes \mathbf{KL}_{\op{k}}(\op{\g})$
\begin{align}\label{fake CDO}
    \KVA{n}[\g,k]=\bigoplus_{\lambda\in R}\weyl_\g^k(\lambda)\otimes \weyl_{\op{\g}}^{\op{k}}(\op{\lambda}^\dagger),\qquad (n\in \Z),
\end{align}
in the proof of the Main theorem.
Here, the levels $(k,\op{k})$ are related by the formula  
\begin{align}\label{final relations}
    \frac{1}{a(k+h^\vee_\g)}+\frac{1}{b(\op{k}+ h^\vee_{\op{\g}})}=c n
\end{align}
where $h^\vee_\g$ (resp.\ $h^\vee_{\op{\g}})$ is the dual Coxeter number of $\g$ (resp.\ $\op{\g}$) and $(a,b,c)$ are as in Table \ref{tab: abc} below.}
\renewcommand{\arraystretch}{1.2}
\begin{table}[h]
    \caption{}
    \label{tab: abc}
    \centering
    \begin{tabular}{|cccc|}
\hline
$\g$ & $(a,b,c)$ & Parity  & $\Delta_K$ \\ \hline
$\gl_m$ & $(1,1,1)$ & {\small $\Pi^{n} (\C^m\otimes \overline{\C}^m)$, $\Pi^{n} (\overline{\C}^m\otimes \C^m)$} &$\frac{nm}{2}$ \\
$\so_{2m+1}$ &  $(1,2,1)$& {\small $\C^{2m+1}\otimes \C^{1|2m}$} &$nm$\\
$\symp_{2m}$ & $(1,1,2)$ & {\small $\Pi^{n}(\C^{2m}\otimes \C^{2m})$} & $n(m+\frac{1}{2})$\\ 
$\so_{2m}$ &  $(1,1,1)$ &{\small $\Pi^{n}(\C^{2m}\otimes \C^{2m})$} & $n(m-\frac{1}{2})$\\
$\osp_{1|2m}$& $(2,1,1)$ &{\small $\C^{1|2m} \otimes \C^{2m+1}$} &$nm$  \\ \hline
    \end{tabular}
\end{table}
\renewcommand{\arraystretch}{1}

It was conjectured in \cite{CG, FG} that $\KVA{n}[\g,k]$ has the structure of a simple vertex superalgebra and was called the geometric Langlands kernel (super)algebra. 
The super-structure depends on the value $n$ so that the subspace
$$ \rho_\g\otimes \op{\rho}_{\op{\g}}\subset \weyl_\g^k(\rho_\g)\otimes \weyl_{\op{\g}}^{\op{k}}(\op{\rho}_{\op{\g}})
$$ (and it dual for $\g=\gl_m$) has the parity as described in Table \ref{tab: abc} where $\Pi$ denotes the odd isomorphism for arbitrary vector superspaces which interchanges the parity. The value $\Delta_K$ in Table \ref{tab: abc} is the conformal weight of the subspace $\rho_\g\otimes \op{\rho}_{\op{\g}}$ in terms of the conformal vector by Segal--Sugawara construction for $V^k(\g)\otimes V^{\op{k}}(\op{\g})$.

The construction of $\KVA{n}[\g,k]$ as vertex superalgebras was established in the case $\g=\op{\g}$ by Moriwaki \cite{M}, see also \cite[Corollary 1.4]{CKM} for some special cases. 
The construction uses the vertex algebras known as chiral differential operators \cite{AG, GMS1, GMS2}
$$\cdo:=\mathrm{Ind}_{\g[t]\oplus \C K}^{\widehat{\g}_k}\C[J_\infty G]$$
for the algebraic groups $G=SL_m, SO_{m}, Sp_{2m}$ whose Lie algebras we denote by $\g=\mathrm{Lie}(G)$ by abuse of notation. Here, $J_\infty G$ is the infinite jet scheme of $G$ on which  $\g[t]$ acts by the left-invariant vector fields. 
{As a vertex algebra, $\cdo$ is generated by the affine subalgebra $V^k(\g)$ and the commutative vertex algebra $\C[J_\infty G]$, which satisfy the OPEs
\begin{align*}
    Y(X,z) Y(f,w)\sim \frac{Y(X.f,w)}{(z-w)},\qquad (X\in \g, f \in \C[G]),
\end{align*}
where $X.f$ denotes the $\g$-action on $\C[G]$ as the left-invariant vector fields.}
Recall that $\C[G]$ is a $(\g,\g)$-bimodule by the left and right invariant vector fields, which provides the decomposition  
$\C[G]\simeq \bigoplus_{\lambda\in R}L_\lambda\otimes L_{\lambda^\dagger}$
known as the Peter--Weyl theorem. 
Similarly, $\cdo$ contains the affine vertex algebra $V^{\op{k}}(\g)$ chiralizing the right-invariant vector fields whose level $\op{k}$ is determined by the relation 
\begin{align}\label{duality rel for kernel VOA}
\frac{1}{(k+h^\vee_\g)}+\frac{1}{(\op{k}+h^\vee_\g)}=0.
\end{align}
As a $(V^k(\g), V^{\op{k}}(\g))$-bimodule, $\cdo$ decomposes into the right-hand side of \eqref{fake CDO} by setting $\op{\g}=\g$ and $\op{\lambda}=\lambda$. Note that the relation \eqref{duality rel for kernel VOA} is also equivalent to \eqref{final relations} for $\g=\symp_{2m}, \so_{2m}$ by setting $n=0$.
Therefore, $\KVA{n}[\g,k]$ can be regarded essentially as twisted versions of the chiral differential operators $\cdo$.

The coordinate ring $\C[G]$ has a $q$-deformation $O_q(G)$, called the quantum coordinate ring. It is a Hopf algebra dual to the quantum enveloping algebra $U_q(\g)$ and is a commutative algebra (ind-)object of the Deligne product $(U_q(\g), R_{\rho})\text{-mod}\boxtimes (U_{q^{-1}}(\g), R_{-\rho})\text{-mod}$. Here $(U_q(\g), R_{\rho})\text{-mod}$ is the braided tensor category of finite dimensional $U_q(\g)$-modules of type 1 equipped with a braiding defined by the R-matrix $R_{\rho}$ depending on $\rho\in \C$ such that $\mathrm{exp}(\pi \ssqrt{-1}\rho)=q$, see e.g. \cite{KS}.
In \cite{M}, by using $O_q(G)$ and twists of the braided tensor category $(U_q(\g), R_{\rho})\text{-mod}$ by abelian 3-cocycles, Moriwaki has constructed various kinds of simple vertex superalgebras which are extensions of affine vertex algebras and lattice vertex superalgebras including $\KVA{n}[\g,k]$ in \eqref{fake CDO} for $\op{\g}=\g$ via the Kazhdan--Lusztig correspondence 
$$\mathbf{KL}_{k}(\g) \xrightarrow{\simeq}\left(U_q(\g),R_{\rho}\right)\text{-mod}$$
where $\rho=1/r^\vee(k+h^\vee_\g)$ and $q=\mathrm{exp}(\pi \ssqrt{-1}/r^\vee(k+h^\vee_\g))$ with $r^\vee$ the lacing number of $\g$.
These twistings modify the chiral differential operators $\cdo$ so that the relation \eqref{duality rel for kernel VOA} is replaced by new ones
\begin{align}\label{new duality rel for kernel VOA}
\frac{1}{(k+h^\vee_\g)}+\frac{1}{(\op{k}+h^\vee_\g)}=r^\vee \tau_{R}\ n
\end{align}
where $\tau_{R}$ is the level of the lattice $R$, which is $m$ (resp.\ 1) if $\g=\sll_m$ (resp.\ $\so_m$, $\symp_{2m}$).
When $\g=\sll_m$, the twistings allow constructions of more vertex superalgebras by adding appropriate lattice vertex superalgebras. 
Let $V_{\sqrt{nm}\Z}$ denote the lattice vertex superalgebra associated with the lattice $\sqrt{nm}\Z$, which is an extension of the rank one Heisenberg vertex algebra $\pi$. This superalgebra has simple modules $V_{\frac{a}{\sqrt{nm}}+\sqrt{nm\Z}}$ $(a\in \Z_{nm})$.
Then, there exists a simple vertex superalgebra structure on 
\begin{align*}
\mathscr{V}^{n}:=\bigoplus_{\lambda\in R} \weyl_{\sll_m}^k(\lambda)\otimes \weyl_{\sll_m}^{\op{k}}(\lambda^\dagger)\otimes V_{\frac{n [\lambda]}{\sqrt{nm}}+\sqrt{nm}\Z}
\end{align*}
by \cite{M} where the level $\op{k}$ is determined by \eqref{new duality rel for kernel VOA} by setting $r^\vee=1$ and $\tau_{R}=1$. 
Here, $[\lambda]$ is the image of $\lambda$ under the projection $P\twoheadrightarrow P/Q\xrightarrow{\simeq} \Z_m, (\varpi_i\mapsto i)$. 
Note that the decomposition for $\mathscr{V}^n$ in the right-hand side is compatible with the Peter--Weyl theorem for $G=GL_m$ since weights appearing in the lattice part are regarded as one-dimensional representations for $\C\mathrm{Id}\subset \gl_m$.
It is straightforward to find that $\mathscr{V}^{n}\otimes \pi$ decomposes into the right-hand side of $\KVA{n}[\g,k]$ in \eqref{fake CDO}.
Now, to summarize, \cite[Main Theorem B]{M} specializes to the following statement:
\begin{theorem}[\cite{M}]\label{Kernel VOA of the first kind}
For $k\in \C\backslash \Q$, $\KVA{n}[\g,k]$ with $\g=\gl_m, \so_m$, or $\symp_{2m}$ has the structure of a simple vertex superalgebra satisfying the conditions in Table \ref{tab: abc}.
\end{theorem}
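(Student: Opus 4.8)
The plan is to obtain Theorem~\ref{Kernel VOA of the first kind} as a specialization of \cite[Main Theorem B]{M}, so the task reduces to exhibiting $\mathfrak{A}^n[\g,k]$ as one of the algebra objects handled there and checking its hypotheses. First I would fix the ambient braided tensor category. Since $k\in\C\backslash\Q$, the category $\mathbf{KL}_k(\g)$ is semisimple with simple objects the Weyl modules $\weyl^k_\lambda$, and the Kazhdan--Lusztig equivalence identifies it with $(U_q(\g),R_{1/r^\vee(k+h^\vee)})\Mod$. The level $\ell$ defined by \eqref{duality rel for kernel VOA} is again irrational, since the relation equates $1/r^\vee(\ell+h^\vee)$ with a rational number minus an irrational one; hence $\mathbf{KL}_\ell(\g)\simeq(U_{\tilde q}(\g),R_{1/r^\vee(\ell+h^\vee)})\Mod$ is likewise semisimple, and \eqref{duality rel for kernel VOA} says precisely that $\tilde q=q^{-1}$ up to the root-of-unity ambiguity measured by the rational number $nN_{\g,R}$. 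In the language of \cite{M} this ambiguity is exactly the datum of an abelian $3$-cocycle on the fusion group (which is $\Z_m$ for $\sll_m$ and is $\Z_2$ or trivial in the orthosymplectic cases) whose associated quadratic form has level governed by $nN_{\g,R}$.

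Second, I would identify the underlying object. The Peter--Weyl theorem gives $\C[G]\simeq\bigoplus_{\lambda\in R}L_\lambda\otimes L_{\lambda^\dagger}$, and its $q$-deformation $O_q(G)$ is a commutative algebra (ind-)object in $(U_q(\g),R_\rho)\Mod\boxtimes(U_{q^{-1}}(\g),R_{-\rho})\Mod$. Transporting $O_q(G)$, twisted by the abelian $3$-cocycle from the first step, through the inverse Kazhdan--Lusztig functors produces an object of $\mathbf{KL}_k(\g)\boxtimes\mathbf{KL}_\ell(\g)$ whose isotypic summands are $\weyl^k_\lambda\otimes\weyl^\ell_{\lambda^\dagger}$, i.e.\ exactly \eqref{Kernel VOA}; Moriwaki's theorem then endows such a twisted commutative algebra object with a (simple) vertex superalgebra structure. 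The nontrivial point to check is that the twist does not destroy commutativity, which by the ribbon/balancing identity and the standard dictionary reduces to a congruence modulo $\Z$ (resp.\ $\tfrac12\Z$) on the sums of conformal weights $\Delta^k_\lambda+\Delta^\ell_{\lambda^\dagger}$. This congruence is again exactly \eqref{duality rel for kernel VOA}, and tracking the $\Z_2$-grading shows that the weights are integral when $n$ is even and land in $\tfrac12\Z$ when $n$ is odd---the origin of the even/odd dichotomy. For $\g=\gl_m$ I would run the same argument with $G=GL_m$, splitting the Peter--Weyl data as in $\C[GL_m]\cong\C[SL_m]\otimes\C[\C^\times]$; the torus factor contributes the rank-one lattice vertex superalgebra $V_{\sqrt{nm}\Z}$ together with its simple modules, and matching one-dimensional $\C\,\mathrm{Id}$-characters with the projection $\pi\colon P\twoheadrightarrow P/Q\xrightarrow{\sim}\Z_m$ recovers \eqref{Kernel VOA for gl}.

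Finally, simplicity is inherited: $O_q(G)$, and its $GL_m$ variant, is a connected \'etale---indeed simple-current-type---commutative algebra object in the relevant semisimple braided category, and this property is preserved both by the $3$-cocycle twist and by the Kazhdan--Lusztig equivalence, so Moriwaki's simplicity criterion applies. The step I expect to be the main obstacle is the verification, in the second paragraph, that the abelian $3$-cocycle twist leaves $O_q(G)$ a \emph{commutative} (super)algebra object of the correct parity: this is a uniform computation of ribbon twists, equivalently of the conformal weights $\Delta^k_\lambda+\Delta^\ell_{\lambda^\dagger}$ modulo $\Z$ or $\tfrac12\Z$, carried out over all $\lambda\in R$, and it is here that both the precise form of the level relation \eqref{duality rel for kernel VOA} and the parity of $n$ enter essentially; the orthosymplectic fusion groups and the $\gl_m$ lattice contribution each demand their own bookkeeping. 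One could instead try to build the vertex operator structure on \eqref{Kernel VOA} by hand from operator product expansions of intertwining operators among the $\weyl^k_\lambda$ and checking locality and associativity directly, but this would essentially reprove Moriwaki's categorical machinery, so I would not pursue it.
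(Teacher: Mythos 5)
Your proposal follows exactly the route the paper takes: the theorem is obtained as a specialization of Moriwaki's Main Theorem B, with the object identified as the quantum coordinate ring $O_q(G)$ (or its $GL_m$ variant with the lattice factor) twisted by an abelian $3$-cocycle and transported through the Kazhdan--Lusztig equivalence $(U_q(\g),R_{1/r^\vee(k+h^\vee)})\text{-mod}\simeq\mathbf{KL}_k(\g)$, which is precisely the setup the paper describes before stating the theorem. Your expanded discussion of the commutativity/parity check via conformal weights and of the inheritance of simplicity is a reasonable unpacking of what that citation entails, and I see no gap in it.
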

In \S \ref{Main section}, we will use the simple vertex superalgebras $\KVA{\pm1}[\g,k]$ for $\g=\gl_m$, $\so_{2m}$, $\symp_{2m}$ and also $\so_{2m+1}$, $\osp_{1|2m}$. 
However, the construction for the latter case is not known except for $m=1$.
The construction for $m=1$ is reviewed in the Remark \ref{lower rank kernels}.
We will treat the cases $\g=\so_{2m+1},\osp_{1|2m}$ under the assumption of their existence.

\begin{remark}\label{lower rank kernels}
\textup{The vertex superalgebras $\KVA{1}[\g,k]$ can be realized explicitly in lower rank cases by using the exceptional Lie superalgebra $\mathfrak{s}=\mathfrak{d}(2, 1; \alpha)$ ($\alpha \in \C$). Let $L_k(\mathfrak{s})$ denote the simple quotient of the associated affine vertex superalgebra $V^k(\mathfrak{s})$ and $\W_{k}(\mathfrak{s}, f_{\text{min}})$ the simple minimal $\W$-superalgebra at level $k$ \cite{KRW}. Then, we have the following realizations:
\begin{itemize}
\item $\KVA{1}[\gl_2,k]= L_1(\fd(2,1;-\frac{1}{k+1}))\otimes \pi$, see \cite[Theorem 9.1]{CG};
\item $\KVA{1}[\so_3,k]= L_1(\fd(2,1;-2k))$, see \cite[Equation (9.6)]{CG}, \cite[Theorem 3.2]{CGL2};
\item $\KVA{1}[\symp_2,k]= \W_{\frac{1}{2}}(\fd(2,1;-2k-3), f_{\text{min}})$, see \cite[Corollary 2.6]{CGL};
\item $\KVA{1}[\so_4,k]= \Hom_{\mathcal C({A_1^2})} (L_1(\fd(2,1;-\tfrac{1}{k+1}))^{\otimes 2}, V_{\mathbb Z^2})$, see \cite[Theorem 9.1]{CG};
\item $\KVA{1}[\osp_{1|2},k]=L_1(\fd(2,1;-\tfrac{1}{k+1}))$, see \cite[Equation (9.6)]{CG}, \cite[Theorem 3.2]{CGL2}.
\end{itemize}
\noindent 
In the above, $\mathcal C(L)$ denotes the category of modules of the lattice vertex algebra $V_L$ of the lattice $L$. {Then, $\KVA{-1}[\so_3,k](=\KVA{-1}[\osp_{1|2},\op{k}])$ is realized through the relative semi-infinite cohomology in \S \ref{Relative semi-infinite cohomology} as
$$\KVA{-1}[\so_3,k]\simeq \relsemicoh{0}\left(\g,\KVA{-2}[\so_3,k]\otimes \KVA{1}[\so_3,\ell]\right)$$
where $\ell$ is determined by $\op{k}+\ell=-2h^\vee_{\so_3}$.
(This construction also works for the other cases.)}
} 
\end{remark}

\begin{lemma}\label{tentative}
For $k\in \C\backslash \Q$, the bilinear form 
$$\langle\cdot|\cdot \rangle\colon  \Big(\rho_\g\otimes \rho_{\op{\g}}^\dagger\Big)\otimes \Big(\rho_\g^\dagger\otimes \rho_{\op{\g}}\Big)\rightarrow \C,\quad (x,y)\mapsto x_{(2\Delta_K-1)}y$$
is non-degenerate.
\end{lemma}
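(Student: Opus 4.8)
The plan is to reduce the non-degeneracy of $\langle\cdot|\cdot\rangle$ to a statement about the kernel vertex superalgebra $A^n[\g,k]$ being simple, together with the explicit knowledge of the conformal weight $\Delta_K$ from Table \ref{tab: abc}. First I would observe that the pairing $(x,y)\mapsto x_{(2\Delta_K-1)}y$ lands in the degree-$0$ component of $A^n[\g,k]$, which is $\C\mathbf{1}$, since $x$ and $y$ both have conformal weight $\Delta_K$ and the product $x_{(2\Delta_K-1)}y$ has weight $2\Delta_K-(2\Delta_K-1)-1=0$. Thus the bilinear form is well defined and $(\g,{}^s\g)$-equivariant (it is invariant for the diagonal action, since the modes $X_{(0)}$ act as derivations). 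By Schur's lemma applied to the $(\g\times{}^s\g)$-module $(\rho_\g\otimes\rho_{{}^s\g}^\dagger)\otimes(\rho_\g^\dagger\otimes\rho_{{}^s\g})$ — which contains the trivial module exactly once, since $\rho_\g\otimes\rho_\g^\dagger$ and $\rho_{{}^s\g}^\dagger\otimes\rho_{{}^s\g}$ each contain it once — the form is either identically zero or non-degenerate. So it suffices to exhibit a single pair $(x,y)$ with $x_{(2\Delta_K-1)}y\neq 0$.

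Next I would argue that vanishing of the form is impossible because it would contradict simplicity of $A^n[\g,k]$ (Theorem \ref{Kernel VOA of the first kind} for $\g=\gl_m,\so_m,\symp_{2m}$, and the corresponding hypothesis for the $\so_{2m+1}/\osp_{1|2m}$ case). Indeed, let $M=\weyl_\g^k(\varpi_1)\otimes\weyl_{{}^s\g}^\ell(\varpi_1^\dagger)$ (suitably interpreted/parity-shifted as in the table) be the summand of $A^n[\g,k]$ whose lowest-weight space is $\rho_\g\otimes\rho_{{}^s\g}^\dagger$, and similarly $M^\dagger$ the dual summand with lowest-weight space $\rho_\g^\dagger\otimes\rho_{{}^s\g}$. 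If the top pairing $\langle\cdot|\cdot\rangle$ were zero, then for top-level vectors $x\in M$, $y\in M^\dagger$ all products $x_{(j)}y$ with $j\geq 2\Delta_K-1$ would vanish (the higher modes kill the top by weight reasons, and $j=2\Delta_K-1$ is the claimed pairing); hence the vertex-algebra ideal generated by $M^\dagger$ would not contain $\mathbf{1}$, in fact the subspace $\bigoplus_{\lambda\neq 0}$ of summands would generate a proper ideal, contradicting simplicity. Making this last step precise is the main obstacle: I need to check that $x_{(j)}y=0$ for \emph{all} $j$ when $x,y$ range over these two specific top levels, not merely for the single critical mode. For $j>2\Delta_K-1$ this is automatic by conformal weight; for $j<2\Delta_K-1$ one uses that $x_{(j)}y$ lies in a positive-weight space of a direct sum of $\weyl_\g^k(\mu)\otimes\weyl_{{}^s\g}^\ell({}^s\mu)$ with $\mu$ ranging over weights in $\varpi_1\otimes\varpi_1^\dagger$, and one must rule out a nonzero singular-type vector there — this follows from the $\g\times{}^s\g$-equivariance of all the modes together with the multiplicity-one structure, i.e. the only place the diagonal invariants can survive is at weight $0$.

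An alternative, more hands-on route for the non-super cases is to realize $A^0[\g,k]$ inside $\mathcal D^{\mathrm{ch}}_{G,\kappa}=\mathrm{Ind}\,\C[J_\infty G]$ and compute the pairing directly from the Peter–Weyl decomposition $\C[G]\simeq\bigoplus_{\lambda\in R}L_\lambda\otimes L_{\lambda^\dagger}$, where the matrix-coefficient functions give an explicit nonzero pairing between $L_{\varpi_1}\otimes L_{\varpi_1^\dagger}$ and its dual at the level of $\C[G]$ (the canonical evaluation), and then propagate this to generic $n$ by the deformation argument of Moriwaki via $O_q(G)$; for $\g=\osp_{1|2m}$ one instead invokes the appendix's extension of \cite{FGZ}-type arguments. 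I expect the Schur-lemma reduction plus simplicity to be the cleanest; the genuine work is confirming that the critical mode $2\Delta_K-1$ is the \emph{only} surviving one and pinning down the normalization (hence nonvanishing) of the resulting scalar, which I would extract from the OPE $\rho(z)\rho^\dagger(w)$ inside $A^n[\g,k]$ whose most singular term is controlled by the level and the form $\kappa$.
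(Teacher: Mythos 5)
Your first reduction is exactly the paper's: the form is $(\g,{}^s\g)$-invariant and the trivial representation occurs once in $(\rho_\g\otimes\rho_{{}^s\g}^\dagger)\otimes(\rho_\g^\dagger\otimes\rho_{{}^s\g})$, so non-degeneracy is equivalent to non-vanishing. Where you diverge is the non-vanishing step. The paper disposes of it in one sentence: for $\g=\gl_m,\so_{2m},\symp_{2m}$ the kernel VOA is built by Moriwaki from the quantum coordinate ring $O_q(G)$, whose Hopf-algebra structure (multiplication followed by the counit) realizes the pairing of the matrix-coefficient blocks $L_{\varpi_1}\otimes L_{\varpi_1^\dagger}$ with their duals as the manifestly nonzero evaluation pairing; for $\so_3,\osp_{1|2}$ one reads the nonzero bilinear form off the explicit $L_1(\fd(2,1;\alpha))$ construction of [CG]. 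Your route instead tries to deduce non-vanishing abstractly from simplicity of $A^n[\g,k]$. That is a genuinely different and in principle viable argument, and it would have the virtue of applying uniformly to any simple VOA with this decomposition (including the conjectural $\so_{2m+1}/\osp_{1|2m}$ kernels, where both approaches are conditional anyway); the paper's argument instead buys an immediate, computation-free answer at the cost of invoking the specific $O_q(G)$ construction.

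However, your version is not complete, and you say so yourself. The contradiction with simplicity requires showing that if the single critical mode $x_{(2\Delta_K-1)}y$ vanishes on top levels, then the \emph{entire} projection of $\mathfrak{M}_{\varpi_1}\,{}_{(j)}\,\mathfrak{M}_{\varpi_1^\dagger}$ onto the vacuum summand $\weyl_0^k\otimes\weyl_0^\ell$ vanishes for all $j$ and all (not just top-level) vectors; otherwise the ideal generated by $\mathfrak{M}_{\varpi_1^\dagger}$ could still hit a positive-weight vector of the simple vacuum module $\weyl^k_0\otimes\weyl^\ell_0$ and hence reach $\mathbf{1}$. The missing lemma is the standard generic-level determinacy statement: the $\mu=0$ component of the multiplication is an intertwining operator of type $\binom{\weyl_0\otimes\weyl_0}{\weyl_{\varpi_1}\otimes\weyl_{\varpi_1^\dagger},\ \weyl_{\varpi_1^\dagger}\otimes\weyl_{\varpi_1}}$, the source modules are generated by their top levels, the target is simple and self-contragredient with top level $\C\mathbf{1}$, and therefore the operator is determined by (and vanishes with) the induced map on top levels, which is precisely $\langle\cdot|\cdot\rangle$. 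With that lemma supplied, your argument closes; without it, the claim that ``the ideal generated by $M^\dagger$ would not contain $\mathbf{1}$'' is unsupported. You would also need to be careful that one application of modes suffices to generate the ideal (true here since $\mathfrak{M}_{\varpi_1^\dagger}$ is a $T$-stable $V^k(\g)\otimes V^\ell({}^s\g)$-submodule), and that only the $\lambda=\varpi_1$ block can map into the vacuum summand against $\mathfrak{M}_{\varpi_1^\dagger}$, which your equivariance remark does cover.
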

\proof
{It suffices to show that $\langle\cdot|\cdot \rangle$ is non-zero since $\langle\cdot|\cdot \rangle$ is a $(\g,\op{\g})$-invariant bilinear form.
Set $\A_\lambda=\weyl_\g^k(\lambda)\otimes \weyl_{\op{\g}}^{\op{k}}(\op{\lambda}^\dagger)$ so that $\A[\g,k]=\bigoplus_{R}\A_\lambda$. Let 
\begin{align*}
    Y_{\lambda, \mu}^\nu(\cdot, z)\colon \A_\lambda\otimes \A_\mu\rightarrow \A_\nu (\!(z)\!) 
\end{align*}
denote the intertwining operator among modules over $\A_0=V^k(\g)\otimes V^{\op{k}}(\op{\g})$ obtained as the restriction of the structure map $Y(\cdot,z)\colon \A[\g,k]\otimes \A[\g,k]\rightarrow \A[\g,k](\!(z)\!)$. Since $k\in \C\backslash \Q$, the space of all the intertwining operators is naturally identified as 
\begin{align}\label{intertwining op idneitification}
    I_{\A_0}\binom{\A_\nu}{\A_\lambda\ \A_\mu} \simeq \Hom_{(\g,\op{\g})}\big(L_{(\lambda, \op{\lambda})} \otimes L_{(\mu, \op{\mu})}, L_{(\nu, \op{\nu})} \big)
\end{align}
by \cite{FZ, Li} where we set $L_{(\lambda, \op{\lambda})}=L_\lambda\otimes L_{\op{\lambda}^\dagger}$ and so on. Recall that the isomorphism \eqref{intertwining op idneitification} is obtained by restricting the intertwining operators on the top spaces 
\begin{align*}
    \begin{array}{ccc}
      \A_\lambda\otimes \A_\mu &\rightarrow &\A_\nu \{\!\{z\}\!\}\\
      \cup && \cup\\
      L_{(\lambda, \op{\lambda})}\otimes L_{(\mu, \op{\mu})} & \rightarrow &L_{(\nu, \op{\nu})}\otimes \C z^\alpha
    \end{array}
\end{align*}
where $\A_\nu \{\!\{z\}\!\}$ denotes the $\A_\nu$-valued $z$-series with arbitrary complex powers $z^c$ ($c\in \C$) and $\alpha$ is a certain complex number.}

{Now, let $\lambda_o$ denote the highest weight corresponding  to $\rho_\g\otimes \rho_{\op{\g}}^\dagger$. Then $\langle\cdot|\cdot \rangle$ is the homomorphism in the right-hand side of \eqref{intertwining op idneitification} for $\lambda=\lambda_o$, $\mu=\lambda_o^\dagger$ and $\nu=0$. 
Hence, it suffices to show $Y_{\lambda_o, \lambda_o^\dagger}^0\neq 0$ to obtain $\langle\cdot|\cdot \rangle\neq0$. 
Since $\A[\g,k]$ is simple, the ideal $\mathscr{I}$ generated by $\A_{\lambda_o}$ is $\A[\g,k]$. 
On the other hand, the ideal $\mathscr{I}$ itself is described as 
\begin{align*}
    \mathscr{I}=\Span\{\A_{\mu}{}_{(N)}\A_{\lambda_o}\mid \mu \in P_+,\ N\in\Z\}
\end{align*}
by \cite[Proposition 4.5.6]{LL}. Since 
\begin{align*}
   \A_0=\A_0 \cap \mathscr{I}=\Span\{\A_{\lambda_o^\dagger}{}_{(N)}\A_{\lambda_o}\mid N\in\Z\},
\end{align*}
we have $Y_{\lambda_o, \lambda_o^\dagger}^0\neq 0$. This completes the proof.}
\endproof

\section{Relative semi-infinite cohomology}

\subsection{Relative semi-infinite cohomology}\label{Relative semi-infinite cohomology}
Let $\g$ be as in \S \ref{tensor representation} and take a parity-homogeneous basis $\{x_i\}_{i=1}^{\mathrm{dim}\ \g}$ with structure constants $c_{i,j}^k$, i.e. $[x_i,x_j]=\sum_kc_{i,j}^kx_k$. 

Introduce a vertex superalgebra $\semi$ generated by the fields 
$$\varphi_i^*(z)=\sum_{n\in \Z}\varphi_{i,n}^*z^{-n},\quad \varphi_i(z)=\sum_{n\in \Z}\varphi_{i,n}z^{-n-1},\quad (i=1,\dots, \mathrm{dim}\ \g)$$
which have the parity opposite to $x_i$ and satisfy the OPEs
$$\varphi_i(z)\varphi_j^*(w)\sim \frac{\delta_{i,j}}{z-w},\quad \varphi_i(z)\varphi_j(w)\sim 0\sim \varphi_i^*(z)\varphi_j^*(w).$$
{We introduce two gradings on $\semi$ as follows:
\begin{itemize}
    \item (Cohomological grading) $\semi=\bigoplus_{n\in \Z}\semiinf{n}{}$ so that 
    $$\deg\partial^n\varphi_i^*=1,\quad \deg\partial^n\varphi_i=-1,\quad \deg(ab)=\deg(a)+\deg(b),$$
    \item (Conformal grading) $\semi=\bigoplus_{\Delta\geq0}\semiinf{}{\Delta}$ so that 
    $$\Delta(\pd^n\varphi^*_i)=n,\quad \Delta(\pd^n\varphi_i)=n+1,\quad \Delta(ab)=\Delta(a)+\Delta(b).$$
\end{itemize}
Note that these gradings are compatible
\begin{align*}
    \semi=\bigoplus_{n,\Delta} \semiinf{n}{\Delta},\quad \semiinf{n}{\Delta}=\semiinf{n}{}\cap \semiinf{\bullet}{\Delta}
\end{align*}
so that $\semiinf{n}{\Delta}$ is finite-dimensional.}

{On the other hand, we have a homomorphism of vertex superalgebras 
\begin{align*}
    V^{\kappa_\g}(\g)\rightarrow \semi,\quad x_i(z)\mapsto \sum_{j,k=1}^{\dim{\g}}(-1)^{\bar{x}_j}c_{i,j}^k\NO{\varphi_k(z)\varphi_j^*(z)}
\end{align*}
where $\kappa_\g$ is the Killing form on $\g$.}
It preserves the cohomological grading by setting the trivial grading on $V^{\kappa_\g}(\g)$ and also the conformal grading by setting the standard one (i.e., $\Delta(x_i)=1$) on $V^{\kappa_\g}(\g)$. 
In particular, $\semi$ is a positive energy representation over $V^{\kappa_\g}(\g)$, and moreover, is an object in $\mathbf{KL}_{\kappa_{\g}}(\g)$ as each subspace $\semiinf{n}{\Delta}$ is a finite-dimensional $\g$-module.

For an object $M$ in $\KL_{-\kappa_\g}$, we introduce 
$$C^{\frac{\infty}{2}+\bullet}(\g,M):=M\otimes \semi,$$
which is a $V^0(\g)$-modules with respect to the diagonal action.
It is a complex by setting an (odd) differential 
$$d=\int Q(z)dz$$
with 
$$Q(z)= \sum_i (-1)^{\bar{x}_i}\NO{x_i(z)\varphi_i^*(z)}-\frac{1}{2}\sum_{i,j,k} (-1)^{\bar{x}_i \bar{x}_k}c_{i,j}^k\NO{\varphi_i^*(z)\varphi_j^*(z)\varphi_k(z)}$$
and a grading given by that of $\semi$.
The cohomology $\semicoh{\bullet}(\g,M)$ is called the semi-infinite cohomology of $\widehat{\g}$ with coefficients in $M$ \cite{Fe, FGZ}. 
In the below, we will consider a relative version, called \emph{the relative semi-infinite cohomology},  which assigns the cohomology of the following subcomplex: 
\begin{align}\label{rel semi complex}
C^{\frac{\infty}{2}+\bullet}_{\operatorname{rel}}(\g,M):=\left(M\otimes \relsemi(\g) \right)^\g.
\end{align}
Here $\relsemi(\g)\subset \semi$ is the subalgebra generated by $\{\partial \varphi_i^*(z), \varphi_i(z)\}_{i=1}^{\mathrm{dim}\ \g}$ and $(?)^\g$ denotes the $\g$-invariant subspace by the diagonal $\g$-action.
By associating the relative semi-infinite cohomology $\rel{\bullet}(\g,M)$ for each $M$, we obtain a functor 
$$\rel{\bullet}(\g,\cdot)\colon \KL_{-\kappa_\g} \rightarrow \mathrm{SVect}^\Z,$$
where $\mathrm{SVect}^\Z$ denotes the category of $(\Z_2\times\Z)$-graded vector spaces over $\C$.

\begin{remark}\label{decomposition of complexes}
\textup{{
Since $d$ is of conformal degree 0, the decompositions 
$$C^{\frac{\infty}{2}+\bullet}(\g,M)=\bigoplus C^{\frac{\infty}{2}+\bullet}(\g,M)_\Delta\supset C^{\frac{\infty}{2}+\bullet}_{\operatorname{rel}}(\g,M)=\bigoplus C^{\frac{\infty}{2}+\bullet}_{\operatorname{rel}}(\g,M)_\Delta$$
by the conformal grading are decompositions into subcomplexes. Moreover, the subcomplexes consist of finite dimensional terms 
\begin{align*}
    \dim C^{\frac{\infty}{2}+n}(\g,M)_\Delta,\quad \dim C^{\frac{\infty}{2}+n}_{\operatorname{rel}}(\g,M)_{\Delta}<\infty. 
\end{align*}
For the latter, we have $\dim C^{\frac{\infty}{2}+\bullet}_{\operatorname{rel}}(\g,M)_\Delta< \infty$ more strongly.}}
\end{remark}

For a finite dimensional $\g$-module $M$, we have an even natural isomorphism
\begin{align*}
    \Hom_\C(M,\C^{1|0})\otimes M\xrightarrow{\simeq}\End(M),\quad f\otimes v\mapsto [w\mapsto f(w)v].
\end{align*}
Let $\mathrm{str}_{M}$ denote the pull-back of $\id_M$ under this isomorphism. When $M=L_\lambda$, we also write $\str_M=\str_\lambda$.

\begin{lemma}\label{multi of trivial rep}
For $\lambda,\mu \in P_+$, we have an even isomorphism of vector superspaces
$$\Hom_\g(\C^{1|0}, L_\lambda\otimes L_\mu)\simeq \delta_{\lambda,\mu^\dagger}\C\mathrm{str}_\mu.$$
\end{lemma}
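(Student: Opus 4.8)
The plan is to reduce the statement to the classical fact that, for finite-dimensional simple modules over the relevant (super)algebra, $\Hom_\g(L_\lambda, L_\mu)\simeq\delta_{\lambda,\mu}\C$, together with the standard duality identification $\Hom_\g(\C^{1|0},L_\lambda\otimes L_\mu)\simeq\Hom_\g(L_\lambda^\vee,L_\mu)\simeq\Hom_\g(L_{\lambda^\dagger},L_\mu)$. The first isomorphism here uses the even natural pairing $\Hom_\C(M,\C^{1|0})\otimes N\simeq\Hom_\C(M,N)$ recalled just above the statement, applied to $M=L_\lambda$, $N=L_\mu$; its $\g$-invariants on the left give $\Hom_\g(\C^{1|0},\Hom_\C(L_\lambda,\C^{1|0})\otimes L_\mu)$, and identifying $\Hom_\C(L_\lambda,\C^{1|0})\simeq L_{\lambda^\dagger}$ via the Lemma on page preceding (the isomorphism $L_{\lambda^\dagger}\simeq\Hom_\C(L_\lambda,\C^{1|0})$ is even), we land in $\Hom_\g(\C^{1|0},L_{\lambda^\dagger}\otimes L_\mu)\simeq\Hom_\g(L_{\lambda^\dagger}^{\,\vee},L_\mu)\simeq\Hom_\g(L_\lambda,L_\mu)$, where the last step uses $(\lambda^\dagger)^\dagger=\lambda$. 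Since all these identifications are even, we only need to track that the distinguished generator maps correctly.

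Concretely, I would argue as follows. First, by complete reducibility of finite-dimensional modules over $\g\in\{\gl_m,\sll_m,\so_m,\symp_{2m},\osp_{1|2m}\}$ (for $\osp_{1|2m}$ this is classical, and in any case tensor representations decompose into the $L_\nu$), Schur's lemma in the super setting gives $\Hom_\g(L_\lambda,L_\mu)=\delta_{\lambda,\mu}\C\id_{L_\mu}$, with the generator even. Next, transport this along the chain of even isomorphisms above: the condition $\lambda=\mu$ on the $\Hom(L_\lambda,L_\mu)$ side translates to $\lambda^\dagger=\mu$, i.e. $\lambda=\mu^\dagger$, so $\Hom_\g(\C^{1|0},L_\lambda\otimes L_\mu)$ is one-dimensional precisely when $\lambda=\mu^\dagger$ and zero otherwise. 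Finally I must check that the image of $\id_{L_\mu}$ under the composite is exactly $\str_\mu$ (up to the convention chosen in the definition of $\str_M$): unwinding the definitions, $\str_\mu\in\Hom_\C(L_\mu,\C^{1|0})\otimes L_\mu\hookrightarrow L_{\mu^\dagger}\otimes L_\mu$ is by construction the preimage of $\id_{L_\mu}$ under $\Hom_\C(L_\mu,\C^{1|0})\otimes L_\mu\xrightarrow{\sim}\End(L_\mu)$, so when $\lambda=\mu^\dagger$ the invariant line in $L_\lambda\otimes L_\mu=L_{\mu^\dagger}\otimes L_\mu$ is exactly $\C\str_\mu$. This pins down the generator and gives the stated isomorphism with the correct normalization.

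The only genuine subtlety — and the step I expect to be the main obstacle — is the parity bookkeeping in the super case $\g=\osp_{1|2m}$: one must be sure that $\id_{L_\mu}$ is even, that the identification $\End(L_\mu)\simeq\Hom_\C(L_\mu,\C^{1|0})\otimes L_\mu$ is even (this is asserted in the excerpt), and that the identification $L_{\mu^\dagger}\simeq\Hom_\C(L_\mu,\C^{1|0})$ is even. The last point is exactly the content of the Lemma proved just before the statement of Lemma~\ref{tentative}'s neighbourhood (the one showing the duality isomorphism $L_{\lambda^\dagger}\simeq\Hom_\C(L_\lambda,\C^{1|0})$ is even, via the coincidence of Cartan subalgebras of $\osp_{1|2m}$ and $\symp_{2m}$ and the resulting statement about lowest weights). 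Granting that input, every arrow in the chain is even and the proof is a formal composition; without it one would have a potential parity ambiguity in the generator $\str_\mu$. So the write-up amounts to: invoke Schur/complete reducibility, invoke the even duality isomorphism $L_{\lambda^\dagger}\simeq L_\lambda^*$, compose the even natural isomorphisms, and identify the resulting generator with $\str_\mu$ by unwinding the definition of $\str$.
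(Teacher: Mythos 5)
Your argument is correct and is essentially the paper's own proof: the paper also reduces via $\Hom_\g(\C^{1|0},L_\lambda\otimes L_\mu)\simeq\Hom_\g(L_{\lambda^\dagger},L_\mu)\simeq\delta_{\lambda^\dagger,\mu}\C\,\id_{L_\mu}$ and invokes Schur's lemma, with the evenness of $L_{\lambda^\dagger}\simeq\Hom_\C(L_\lambda,\C^{1|0})$ supplied by the preceding lemma. Your extra care in tracking the generator to $\str_\mu$ and the parity bookkeeping is a more detailed write-up of the same one-line argument.
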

\proof
It is immediate from Schur's lemma: 
$$\notag\Hom_\g(\C^{1|0}, L_\lambda\otimes L_\mu)\simeq \Hom_\g(L_{\lambda^\dagger}, L_\mu)\simeq \delta_{\lambda^\dagger,\mu}\C\mathrm{id}_{L_\mu}.$$
\endproof
The relative semi-infinite cohomology satisfies the following property.
\begin{theorem}[\cite{CFL,FGZ}]\label{main theorem of relcoh}
For $\lambda,\mu \in P_+$, 
$$\relsemicoh{n}\left(\g,\mathbb{V}^{\kappa_1}_\lambda\otimes \mathbb{V}^{\kappa_2}_\mu\right)\simeq \delta_{n,0}\delta_{\lambda,\mu^\dagger}\C [\mathrm{str}_{\mu}]$$
holds for {the levels $(\kappa_1,\kappa_2)$ such that $\kappa_1+\kappa_2=-\kappa_\g$} and the restriction of $\kappa_i$ to the semisimple part of $\g_i$ is an irrational level and to the abelian part is nonzero.
\end{theorem}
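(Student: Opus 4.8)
The plan is to reduce the statement to a purely Lie-algebraic computation of relative semi-infinite cohomology together with a vanishing argument that controls how the conformal grading interacts with the cohomological grading. First I would use Remark \ref{decomposition of complexes}: since the differential $d$ preserves conformal weight, the complex $C^{\frac{\infty}{2}+\bullet}_{\operatorname{rel}}(\g,\mathbb{V}^{\kappa_1}_\lambda\otimes \mathbb{V}^{\kappa_2}_\mu)$ splits into a direct sum over $\Delta\in\frac{1}{2}\Z_{\geq 0}$ of finite-dimensional subcomplexes. The $\Delta=0$ piece is exactly $\left(L_\lambda\otimes L_\mu\otimes \bigwedge^{\frac{\infty}{2}+\bullet}_{\operatorname{rel}}(\g)_0\right)^\g$, and $\bigwedge^{\frac{\infty}{2}+\bullet}_{\operatorname{rel}}(\g)_0$ is the classical relative Chevalley--Eilenberg complex $\bigwedge^{\bullet}(\g^*)^{?}$ modulo the piece generated by $\g$ itself; combined with Lemma \ref{multi of trivial rep} the bottom-weight contribution gives precisely $\delta_{\lambda,\mu^\dagger}\C[\operatorname{str}_\mu]$ in degree $0$. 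So the real content is that \emph{all} subcomplexes with $\Delta>0$ are acyclic.

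The standard way to prove that vanishing, following \cite{FGZ}, is a spectral-sequence / Euler-type argument using a contracting homotopy built from the Sugawara-type operator. Concretely, on each positive-weight subcomplex one produces an operator $K$ (a zero-mode of a suitable field built from the Heisenberg/affine action together with $\varphi_i,\varphi_i^*$) with $[d,K]$ acting as an invertible scalar on that weight space; invertibility is exactly where the hypothesis on $\kappa_i$ enters — the scalar is a linear function of the levels that is nonzero precisely because the level restricted to each simple factor is irrational (hence never a resonant rational value) and the level on each abelian factor is nonzero. This is the Lie-algebra-cohomology analogue of the ``no cohomology away from the critical weight'' phenomenon, and for simple Lie algebras it is precisely Theorem~2.1 of \cite{FGZ} (reproved in \cite{CFL}).

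The genuinely new case, and the main obstacle, is $\g=\osp_{1|2n}$: one must check that the homotopy argument of \cite{FGZ} survives in the super setting. The subtleties are signs in the Clifford-type relations among the $\varphi_i,\varphi_i^*$ (now the $\varphi$'s attached to odd $x_i$ are \emph{bosonic} ghosts, so $\bigwedge^{\frac{\infty}{2}+\bullet}_{\operatorname{rel}}(\g)$ is a mixed $bc$--$\beta\gamma$ system and the weight spaces, while still finite-dimensional by Remark \ref{decomposition of complexes}, are infinite-rank as modules before taking $\g$-invariants) and the need for the super-trace $\operatorname{str}$ rather than the trace to pick out the surviving class. I would handle this by: (i) invoking the appendix of the present paper, where the authors extend \cite{FGZ} to $\osp_{1|2n}$, to get the abstract vanishing; (ii) identifying the surviving degree-$0$ class via Lemma \ref{multi of trivial rep}, which was stated for general $\g$ including the superalgebra case and already outputs $\operatorname{str}_\mu$ with the correct even parity; and (iii) checking compatibility of the $\dagger$-duality ($\lambda^\dagger=-w_0\lambda$, together with the fact that $L_{\lambda^\dagger}\simeq\Hom_\C(L_\lambda,\C^{1|0})$ is \emph{even} by the Lemma preceding Lemma \ref{multi of trivial rep}) so that the Kronecker delta $\delta_{\lambda,\mu^\dagger}$ is the right one and no parity shift sneaks in. Everything else — the reduction to $\Delta$-graded pieces, the classical $\Delta=0$ computation, the semisimplicity of $\mathbf{KL}_k(\g)$ used implicitly to know these Weyl modules behave well — is routine once the super-formality input from the appendix is in hand.
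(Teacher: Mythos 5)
Your outline of the overall structure is reasonable --- split the relative complex by conformal weight, note that the bottom graded piece is $(L_\lambda\otimes L_\mu)^\g$ in cohomological degree $0$ with zero differential (so Lemma \ref{multi of trivial rep} gives $\delta_{\lambda,\mu^\dagger}\C[\operatorname{str}_\mu]$ there), and reduce everything to acyclicity of the higher-weight subcomplexes --- but the proposal does not actually prove that acyclicity, and that is the entire content of the theorem. Two concrete gaps. First, the contracting homotopy $K$ is never constructed: you would need a ghost-number $-1$ operator with $[d,K]$ equal to the conformal weight operator of the complex (equivalently, the $d$-exactness of $T^{\mathrm{Sug}}_{\kappa_1}+T^{\mathrm{Sug}}_{\kappa_2}+T^{\mathrm{gh}}$ up to the bottom-weight shift), together with the observation that this operator is invertible on the weight-$(\Delta_\lambda+\Delta_\mu+j)$ piece for $j>0$ because $k\notin\Q$. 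Writing down $K$ and verifying the identity --- with the correct signs for the \emph{bosonic} ghosts attached to the odd part of $\osp_{1|2n}$ --- is precisely the computation you are omitting, and it is not a quotable consequence of \cite{FGZ}. Second, and decisively, for $\g=\osp_{1|2n}$ (which you yourself identify as the only genuinely new case) your step (i) is to ``invoke the appendix of the present paper.'' That is circular: the appendix \emph{is} the proof of this theorem, so the one case that actually needs an argument receives none.

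For the record, the paper's proof takes a different route that avoids any homotopy operator. It introduces two filtrations on the (relative) complex whose associated spectral sequences have $E_1$-pages $\left(H^\bullet(\lpp{+}\g,M)\otimes H_\bullet(\lpp{-}\g,\C)\right)^\g$ and $\left(H_\bullet(\lpp{-}\g,M)\otimes H^\bullet(\lpp{+}\g,\C)\right)^\g$ (Proposition \ref{spectral seq for relcoh}); the homology $H_n(\lpp{-}\g,\weyl^{\kappa_1}_\lambda\otimes\weyl^{\kappa_2}_\mu)$ vanishes for $n>0$ because the module is free over $U(\lpp{-}\g)$, and the cohomology $H^n(\lpp{+}\g,\cdot)$ vanishes via an explicit pairing between Lie superalgebra cohomology and homology built from the contravariant form on Weyl modules --- non-degenerate exactly because Weyl modules are simple at irrational level, which is where the hypothesis on $\kappa_i$ enters. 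The two spectral sequences together kill $\relsemicoh{n}$ for all $n\neq 0$, and the degree-$0$ part is then pinned down by the Euler--Poincar\'e principle plus Lemma \ref{multi of trivial rep}. To salvage your route you would have to either construct $K$ explicitly in the super setting or replace that step by an argument of this type.
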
 
\noindent
This theorem is proven in \cite{CFL, FGZ} for simple Lie algebras and in 
 \cite[Proposition 6.1]{CGNS} for abelian Lie algebras. 
We give a detailed proof in general, including the super case $\g=\osp_{1|2m}$ in Appendix \ref{proof of formality of relcoh}, by following \cite{FGZ}.
\begin{remark}\textup{
Thanks to the restriction \eqref{rel semi complex}, Theorem \ref{main theorem of relcoh} holds also for Wakimoto modules $\mathbb{W}_\lambda^k$ \cite{FF2,Fr}:
$$\relsemicoh{n}\left(\g,\mathbb{W}^k_\lambda\otimes \mathbb{W}^\ell_\mu\right)\simeq \relsemicoh{n}\left(\g,\mathbb{W}^k_\lambda\otimes \mathbb{V}^\ell_\mu\right) \simeq \delta_{n,0}\delta_{\lambda,\mu^\dagger}\C [\mathrm{str}_{\mu}]$$
{for Lie algebras $\g$ and levels $k, \ell\in \C\backslash \Q$ satisfying $k+\ell=-2h^\vee$}
since $\mathbb{W}_\lambda^k \simeq \mathrm{Ind}_{\g[t]\oplus \C K}^{\widehat{\g}_k}M^\vee_\lambda$ for $k\in \C\backslash \Q$. Here, $M^\vee_\lambda$ is the contragredient Verma module over $\g$ with highest weight $\lambda$.}
\end{remark}
\subsection{Bilinear form}
Here, we prove an easy but important fact on a pairing on vertex superalgebras.

Let $V$ be an arbitrary vertex superalgebra and suppose that we have parity-homogeneous elements $a_{i}$, $b_i$ ($i=1,2$) of $V$ satisfying the OPEs
\begin{align}\label{assumption of the lambda}
  { Y(a_1,z)Y(a_2,w)\sim \frac{\alpha}{(z-w)^{N+1}}+\cdots,\quad Y(b_1,z)Y(b_2,w)\sim \frac{\beta}{(z-w)^{M+1}}+\cdots,\quad Y(a_i,z)Y(b_j,w)\sim 0}
\end{align}
for some $\alpha,\beta\in \C$ and $N,M\geq0$. Here, we mean by ``$\cdots$'' the terms of lower order poles with respect to $(z-w)$. 

\begin{lemma}\label{inner form}
Under the assumption \eqref{assumption of the lambda}, we have 
\begin{align*}
    Y(a_1b_1,z)Y(a_2b_2,w)\sim \frac{(-1)^{\bar{b}_1\bar{a}_2}\alpha\beta}{(z-w)^{N+M+2}}+\cdots.
\end{align*} 
\end{lemma}
\proof {
For the convenience, we show the formula by using the $\lambda$-brackets $[u_\lambda v]=\sum_{n\geq0}u_{(n)}v\lambda^{(n)}$ \cite{DK} instead of the OPEs where $\lambda^{(n)}=\lambda^n/n!$. Then, We have
\begin{align*}
    \lbr{a_1}{a_2}= \alpha \lm{N}+o(\lambda^N),\quad \lbr{b_1}{b_2}=\beta \lm{M} +o(\lambda^M),\quad \lbr{a_i}{b_j}=0,
\end{align*}
which are equivalent to \eqref{assumption of the lambda} and show
\begin{align}\label{formula we want}
\lbr{a_1b_1}{a_2b_2}=(-1)^{\bar{b}_1\bar{a}_2}\alpha\beta \lm{N+M+1}+o(\lambda^{N+M+1}).
\end{align}
Recall that the $\lambda$-brackets satisfy the following formulae \cite{DK}
\begin{align*}
    &\lbr{\pd u}{v}=-\lambda \lbr{u}{v},\\
    &\lbr{u}{vw}=\lbr{u}{v}w+(-1)^{\bar{u}\bar{v}}v\lbr{u}{w}+\int^\lambda_0[\lbr{u}{v}_\mu w]d \mu,\\
    &\lbr{uv}{w}=e^{\pd_u\pd_\lambda}u \lbr{v}{w}+(-1)^{\bar{u}\bar{v}} \left(e^{\pd_v\pd_\lambda}v\lbr{u}{w}+\int_0^\lambda[v_\mu[u_{\lambda-\mu}w]]d \mu\right),\\
    &\lbr{u}{[v{}_\mu w]}-(-1)^{\bar{u}\bar{v}}[v{}_\mu \lbr{u}{w}]=[\lbr{u}{v}{}_{\lambda+\mu}w],
\end{align*}
where $e^{x}=\sum_{n=0}^{\infty}x^n/n!$ and $\pd_u$ denotes the differential acting only on the term $u$.
Then, we have
\begin{align*}
\lbr{(a_1b_1)}{(a_2b_2)}
&=\lbr{(a_1b_1)}{a_2}b_2+(-1)^{(\bar{a}_1+\bar{b}_1)\bar{a}_2}a_2\lbr{(a_1b_1)}{b_2}+\int^\lambda_0[\lbr{(a_1b_1)}{a_2}{}_\mu b_2]d \mu.
\end{align*}
On the other hand, we have  
\begin{align*}
&\lbr{(a_1b_1)}{a_2}b_2=(-1)^{\bar{a}_1\bar{b}_1}\left(e^{\pd_{b_1}\pd_\lambda}b_1 \lbr{a_1}{a_2} \right)b_2=\oo{N+1},\\
&a_2\lbr{(a_1b_1)}{b_2}=(-1)^{(\bar{a}_1+\bar{b}_1)\bar{a}_2}a_2e^{\pd_{a_1}\pd_\lambda}a_1\lbr{b_1}{b_2}=\oo{M+1},
\end{align*}
and 
\begin{align*}
\int^\lambda_0[ \lbr{(a_1b_1)}{a_2}{}_\mu b_2]d \mu&=(-1)^{\bar{a}_1\bar{b}_1}\int_0^\lambda \left[e^{\pd_{b_1}\pd_\lambda}b_1 \lbr{a_1}{a_2}{}_\mu b_2\right] d \mu \\
&=(-1)^{\bar{a}_1\bar{b}_1}\int_0^\lambda \left[e^{\pd\pd_\lambda}b_1 e^{-\pd\pd_\lambda}\lbr{a_1}{a_2}{}_\mu b_2\right] d \mu \\
&=(-1)^{\bar{b}_1\bar{a}_2}\int_0^\lambda e^{-\pd\pd_\lambda}\left( \lbr{a_1}{a_2}\right)  e^{-\mu\pd_\lambda}[b_{1\mu} b_2] d \mu\\
&=(-1)^{\bar{b}_1\bar{a}_2}\int_0^\lambda e^{-\pd\pd_\lambda}\left(\alpha\lm{N}+\oo{N}\right) e^{-\mu\pd_\lambda}\left(\beta\mu^{(M)}+\oom{M}\right) d \mu\\
&=(-1)^{\bar{b}_1\bar{a}_2}\alpha\beta \int_0^\lambda (\lambda-\mu)^{(N)}\mu^{(M)}d \mu+\oo{N+M+1}\\
&=(-1)^{\bar{b}_1\bar{a}_2}\alpha\beta\lm{N+M+1}+\oo{N+M+1}.
\end{align*}
Hence, we obtain the formula \eqref{formula we want} as 
\begin{align*}
\lbr{(a_1b_1)}{(a_2b_2)}
&=\oo{N+1}+\oo{M+1}+(-1)^{\bar{b}_1\bar{a}_2}\alpha\beta\lm{N+M+1}+\oo{N+M+1}\\
&=(-1)^{\bar{b}_1\bar{a}_2}\alpha\beta\lm{N+M+1}+\oo{N+M+1}.
\end{align*}}
\endproof
Now, let $\g$ and $\kappa_i$ be as in Theorem \ref{main theorem of relcoh} and suppose that we have $\frac{1}{2}\Z$-graded vertex superalgebras
\begin{align*}
\mathscr{A}_i=\bigoplus_{\lambda\in R}\mathscr{C}_\lambda^i\otimes \weyl_\lambda^{\kappa_i}(\g),\quad (i=1,2)
\end{align*}
with $R\subset P_+$  
such that 
\begin{itemize}
    \item[(A1)] the affine cosets $\mathscr{C}_0^i\subset \mathscr{A}_i$ are $\frac{1}{2}\Z_{\geq0}$-graded vertex superalgebras of CFT type:
    $$\mathscr{C}_0^i=\bigoplus_{\Delta \in \frac{1}{2}\Z_{\geq0}} \mathscr{C}_{0,\Delta}^i,\qquad \mathscr{C}_{0,0}^i=\C \mathbf{1},$$
    \item[(A2)] $\mathscr{C}^i_{\lambda}$ are all positive energy representations of $\mathscr{C}_0^i$.
\end{itemize}
{By (A2), one may take the lowest conformal weight subspace $\mathsf{C}_{\lambda}^i\subset \mathscr{C}_\lambda^i$.
For each $\lambda$ and $i$, we have a unique integer, say, $K$ such that the structure map $\mathscr{A}_i\otimes \mathscr{A}_i\rightarrow \mathscr{A}_i(\!(z)\!)$ induces 
\begin{align*}
    \left({\sf C}_\lambda^i\otimes L_\lambda \right)\otimes\left({\sf C}_{\lambda^\dagger}^i\otimes L_{\lambda^\dagger}\right)\rightarrow \C \mathbf{1},\quad (u,v)\mapsto u_{(K)}v,
\end{align*}
thanks to (A1). By identifying $\C \mathbf{1}$ with $\C$, it defines a bilinear form 
\begin{align*}
\langle\cdot|\cdot \rangle_{\lambda}^i\colon \left({\sf C}_\lambda^i\otimes L_\lambda \right)\otimes\left({\sf C}_{\lambda^\dagger}^i\otimes L_{\lambda^\dagger}\right)\rightarrow \C,\quad  (i=1,2).
\end{align*}
Note that it is $\g$-invariant as 
\begin{align*}
    \langle xu| v\rangle_{\lambda}^i+ (-1)^{\bar{u}\bar{x}} \langle u| xv\rangle_{\lambda}^i
    &= (x_{(0)}u)_{(K)}v+(-1)^{\bar{u}\bar{x}} u_{(K)}(x_{(0)}v)\\
    &=x_{(0)}(u_{(K)}v)\\
    &=0
\end{align*}
holds for $x\in \g$. Here we have used $x_{(0)}\mathbf{1}=0$. }
Similarly, we consider the relative semi-infinite cohomology
$$\relsemicoh{0}\left(\g, \mathscr{A}_1\otimes \mathscr{A}_2\right)\simeq \bigoplus_{\lambda\in R}\mathscr{C}_\lambda^1\otimes \mathscr{C}_{\lambda^\dagger}^2,$$
and obtain bilinear forms
$$\langle\cdot|\cdot \rangle_{\lambda}\colon \left({\sf C}_{\lambda}^1\otimes {\sf C}^2_{\lambda^\dagger}\right)\otimes \left({\sf C}^1_{\lambda^\dagger}\otimes {\sf C}^2_{\lambda}\right)\rightarrow \C,\quad (\lambda \in R).$$

\begin{corollary}\label{nondegeneracy under relcoh}
If $\langle\cdot|\cdot \rangle_{\lambda}^i$ $(i=1,2)$ are non-degenerate, then so is $\langle\cdot|\cdot \rangle_{\lambda}$.
\end{corollary}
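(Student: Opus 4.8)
The plan is to identify the pairing $\langle\cdot|\cdot\rangle_\lambda$ on $\relsemicoh{0}(\g,\mathscr{A}_1\otimes\mathscr{A}_2)$ explicitly as a composite built out of the two pairings $\langle\cdot|\cdot\rangle_\lambda^i$ together with the canonical perfect pairing on the $\g$-modules $L_\lambda$, and then reduce the nondegeneracy claim to the fact that a tensor product of perfect pairings is perfect. Concretely, Theorem \ref{main theorem of relcoh} gives the isomorphism $\relsemicoh{0}(\g,\weyl_\lambda^{\kappa_1}\otimes\weyl_{\lambda^\dagger}^{\kappa_2})\simeq \C[\str_\lambda]$, and by Remark \ref{decomposition of complexes} the whole cohomology decomposes as $\bigoplus_{\lambda\in P_+}\mathscr{C}_\lambda^1\otimes\mathscr{C}_{\lambda^\dagger}^2$, with the class of $\str_\lambda\in\Hom_\g(\C^{1|0},L_\lambda\otimes L_{\lambda^\dagger})$ (Lemma \ref{multi of trivial rep}) furnishing the canonical generator. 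First I would trace through how the OPE formula \eqref{formula} of Lemma \ref{inner form} descends to cohomology: the field realizing the pairing on $\mathscr{A}_1\otimes\mathscr{A}_2$ is $Y(a_{1(-1)}b_1,z)$ with $a_i$ drawn from the affine part $\weyl_\bullet^{\kappa_i}(\g)$ and $b_i$ from the coset part $\mathscr{C}_\bullet^i$, so the resulting top-pole coefficient factors, up to the sign $(-1)^{\bar a_2\bar b_1}$, into an "affine" contribution and a "coset" contribution.

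The key computation is then the statement that, after passing to $\g$-invariants and cohomology, the affine contribution is exactly the canonical evaluation pairing $L_\lambda\otimes L_{\lambda^\dagger}\to\C$ induced by $\str_\lambda$ — which is perfect essentially tautologically — while the coset contribution on the lowest-weight spaces is precisely $\langle\cdot|\cdot\rangle_\lambda^i$. Granting this, one has on $\mathsf{C}_\lambda^1\otimes\mathsf{C}_{\lambda^\dagger}^2$ (tensored against the appropriate $L$'s and then cut down to invariants) the identity
\begin{align*}
\langle u_1\otimes u_2 \,|\, v_1\otimes v_2\rangle_\lambda \;=\; \pm\,\langle u_1 \,|\, v_1\rangle_{\lambda}^{1}\cdot\langle u_2 \,|\, v_2\rangle_{\lambda^\dagger}^{2},
\end{align*}
where the two factors are the $i=1$ and $i=2$ forms (the second evaluated at weight $\lambda^\dagger$, which is non-degenerate iff the first is, since $\langle\cdot|\cdot\rangle^i_\mu$ and $\langle\cdot|\cdot\rangle^i_{\mu^\dagger}$ are transposes of each other up to the perfect $L$-pairing). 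A non-zero scalar times a tensor product of two non-degenerate bilinear forms on finite-dimensional superspaces is non-degenerate, so $\langle\cdot|\cdot\rangle_\lambda$ is non-degenerate, as claimed.

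The main obstacle is the bookkeeping in that middle step: one must check that the $\g$-invariance projection does not destroy the factorization, i.e. that restricting the externally-defined form $\langle\cdot|\cdot\rangle_\lambda^i$ on $\mathsf{C}_\lambda^i\otimes L_\lambda$ to the invariant subspace of $\mathscr{C}_\lambda^1\otimes L_\lambda\otimes\mathscr{C}_{\lambda^\dagger}^2\otimes L_{\lambda^\dagger}$ indeed pairs with its partner to recover $\langle\cdot|\cdot\rangle_\lambda$, with the correct sign coming from the Koszul rule in Lemma \ref{inner form}. This is where the identification $\relsemicoh{0}(\g,\weyl_\lambda^{\kappa_1}\otimes\weyl_{\lambda^\dagger}^{\kappa_2})\simeq\C[\str_\lambda]$ is used in an essential way: it guarantees that the affine degrees of freedom contribute exactly a one-dimensional space on which the induced pairing is the canonical one, so no information is lost and the coset pairings survive intact on cohomology. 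Once the sign and the one-dimensionality are pinned down, the conclusion is immediate.
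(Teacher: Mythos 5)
Your proposal follows essentially the same route as the paper: both identify $\langle\cdot|\cdot\rangle_\lambda$ as the restriction of the tensored form $\langle\cdot|\cdot\rangle^1_\lambda\otimes\langle\cdot|\cdot\rangle^2_{\lambda^\dagger}$ (via Lemma \ref{inner form}) to the one-dimensional spaces of invariants spanned by $[\str_\lambda]$ and $[\str_{\lambda^\dagger}]$ inside $L_\lambda\otimes L_{\lambda^\dagger}$ and $L_{\lambda^\dagger}\otimes L_\lambda$, and both then argue that this restriction inherits non-degeneracy. The factorization $\langle\cdot|\cdot\rangle^i_\lambda=B^i\otimes(\text{evaluation pairing})$ that you use implicitly is legitimate, since $\Hom_\g(L_\lambda\otimes L_{\lambda^\dagger},\C^{1|0})$ is one-dimensional and the forms are $\g$-invariant.

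The one step you assert rather than prove is the crux: that the scalar $\bigl(\,ev\otimes ev\,\bigr)\bigl([\str_\lambda]\otimes[\str_{\lambda^\dagger}]\bigr)$ multiplying $B^1\otimes B^2$ is non-zero. Restricting a non-degenerate form to a subspace does not preserve non-degeneracy in general, so calling this ``essentially tautological'' hides the only real content of the corollary. Unwinding $\str_\lambda=\sum_j e_j\otimes e^j$ shows this scalar is, up to the Koszul signs you have not pinned down, a dimension or a superdimension of $L_\lambda$; in the purely even case this is clearly non-zero, but for $\bg=\osp_{1|2m}$ the super case requires an argument (superdimensions of modules over Lie superalgebras do vanish in general, even though they happen not to for $\osp_{1|2m}$). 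The paper closes this gap without any computation: since a $\g$-invariant pairing $L_\mu\otimes L_\nu\to\C$ vanishes unless $\mu=\nu^\dagger$, the non-degenerate $(\g,\g)$-invariant tensored form pairs the diagonal-$\g$-isotypic components of type $\mu$ only against those of type $\mu^\dagger$, hence restricts non-degenerately to each such block --- in particular to the trivial isotypic components, which are exactly the spaces \eqref{realization of coh class} carrying the cohomology classes. You should either adopt that isotypic-block argument or explicitly compute the sign and verify the non-vanishing of the resulting (super)dimension for every $\bg$ in Table \ref{tab: hopok-type Walg}.
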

\proof
By  Lemma \ref{inner form}, we obtain $(\g,\g)$-invariant bilinear forms on the complex 
\begin{align}\label{tensored form}
\langle\cdot|\cdot \rangle^1_\lambda\otimes \langle\cdot|\cdot \rangle^2_{\lambda^\dagger}\colon \left( {\sf C}^1_{\lambda}\otimes  L_{\lambda}\otimes L_{\lambda^\dagger}\otimes {\sf C}^2_{\lambda^\dagger}\right) \otimes \left({\sf C}^1_{\lambda^\dagger}\otimes L_{\lambda^\dagger}\otimes L_\lambda\otimes {\sf C}^2_{\lambda}\right) \rightarrow \C,\qquad (\lambda \in R).
\end{align}
By Theorem \ref{main theorem of relcoh}, the bilinear form $\langle\cdot|\cdot \rangle_{\lambda}$ is the restriction of $\langle\cdot|\cdot \rangle^1_\lambda\otimes \langle\cdot|\cdot \rangle^2_{\lambda^\dagger}$ to the subspaces
\begin{align}\label{realization of coh class}
\left({\sf C}_{\lambda}^1\otimes [\str_\lambda]\otimes{\sf C}^2_{\lambda^\dagger}\right)\otimes \left({\sf C}^1_{\lambda^\dagger}\otimes [\str_{\lambda^\dagger}]\otimes {\sf C}^2_{\lambda}\right)\rightarrow \C.
\end{align}
Since the subspaces are exactly the $\g$-invariant subspaces under the diagonal action.
Since arbitrary $\g$-invariant bilinear forms $L_\mu\otimes L_\nu\rightarrow \C$ are zero unless $\mu=\nu^\dagger$, the non-degeneracy of \eqref{tensored form} implies that of $\langle\cdot|\cdot \rangle_{\lambda}$ on \eqref{realization of coh class}.
\endproof

\section{Duality in hook-type $\W$-superalgebras}\label{Main section}
\subsection{Hook-type $\W$-superalgebras}\label{Hook-type}
Let $\g$ be a simple Lie algebra or, more generally, a basic-classical simple Lie superalgebra, and take $\kappa=k \kappa_0$ be an even supersymmetric invariant bilinear form on $\g$. We denote by $V^\kappa(\g)$ the associated affine vertex superalgebra as in \S \ref{affine vertex algebra}. For a pair of an even nilpotent element $f$ and a good grading $\Gamma: \g = \bigoplus_{j \in \frac{1}{2}\Z} \g_j$, we associated the $\W$-superalgebra $\W^\kappa(\g, f; \Gamma)$ obtained as the quantum Drinfeld-Sokolov reduction of $V^\kappa(\g)$ with respect to $(\g,\Gamma)$ \cite{FF1, KRW}.
We choose the Dynkin grading as a good grading $\Gamma$ for $f$, that is, the eigenspace decomposition of the adjoint action of $x$ in an $\sll_2$-triple $(e,h=2x,f)$ containing $f$. Thus, the $\W$-superalgebra will depend only on the nilpotent element $f$. 

The hook-type $\W$-superalgebras are $\W$-superalgebras studied in \cite{CL1,CL2, GR} and will be denoted by 
$$\W^k_{A^\pm}(n,m),\quad \W^k_{B^\pm}(n,m),\quad \W^k_{C^\pm}(n,m),\quad \W^k_{D^\pm}(n,m),\quad \W^k_{O^\pm}(n,m),$$
as listed in Table \ref{tab: hopok-type Walg-intro} in this paper. 
Here we express the level as $\kappa=k \kappa_0$ where $\kappa_0$ is the bilinear form in Table \ref{tab: hopok-type Walg-intro} and \emph{assume $k$ to be irrational, i.e. $k\in \C\backslash \Q$} otherwise stated.  
These $\W$-superalgebras are parameterized by embeddings $\ag\oplus\bg\subset \g$ with nilpotent element taken to be principal inside $\ag$ so that 
$$\g\simeq
\begin{cases}
(\ag\oplus \bg)\oplus \rho_\ag\otimes \rho^\dagger_\bg\oplus \rho^\dagger_\ag\otimes \rho_\bg & \text{if}\ \g=\sll_{N|M},\\
(\ag\oplus \bg)\oplus \rho_\ag\otimes \rho_\bg& \text{if}\ \g=\osp_{N|2M},
\end{cases}$$
holds where $\rho_\ag$ and $\rho^\dagger_\ag$ (resp.\ $\rho_\bg$ and $\rho^\dagger_\bg$) are the natural representation of $\ag$ (resp.\ $\bg$) and its dual. 
\renewcommand{\arraystretch}{1.2}
\begin{table}[h]
    \caption{Hook-type $\W$-superalgebras}
    \label{tab: hopok-type Walg}
    \centering
\begin{adjustwidth}{0cm}{}
\begin{tabular}{|cccccc|}
\hline
Label& $\W^k_{A^+}(n,m)$ &$\W^k_{B^+}(n,m)$ &$\W^k_{C^+}(n,m)$ &$\W^k_{D^+}(n,m)$ &$\W^k_{O^+}(n,m)$\\ \hline
$\g$       & $\sll_{n+m}$ &$\so_{2(n+m+1)}$                 &$\symp_{2(n+m)}$& $\so_{2(n+m)+1}$& $\osp_{1|2(n+m)}$\\ 
$\kappa_0$ &$\tr$          &$\frac{1}{2}\tr$                  &$\tr$ & $\frac{1}{2}\tr$& $-\str$\\
$h^\vee$ &$n+m$        &$2(n+m)$                        &$n+m+1$& $2(n+m)-1$& $n+m+\frac{1}{2}$\\
$\ag$      &$\sll_n$      &$\so_{2n+1}$                         &$\symp_{2n}$& $\so_{2n+1}$& $\symp_{2n}$\\ 
$\bg$      &$\gl_m$      &$\so_{2m+1}$                          &$\symp_{2m}$& $\so_{2m}$& $\osp_{1|2m}$\\
$k_{\bg}^+$  &{ $k+n-1$}            &    { $k+2n$}                  &    { $k+n-\frac{1}{2}$}      & {$k+2n$}       &{$k+n-\frac{1}{2}$}\\ \hline \hline
Label& $\W^k_{A^-}(n,m)$ &$\W^k_{B^-}(n,m)$ &$\W^k_{C^-}(n,m)$ &$\W^k_{D^-}(n,m)$ &$\W^k_{O^-}(n,m)$\\ \hline
$\g$       & $\sll_{n+m|m}$ &$\osp_{2m+1|2(n+m)}$             &$\osp_{2(n+m)+1|2m}$     & $\osp_{2m|2(n+m)}$& $\osp_{2(n+m)+2|2m}$\\ 
$\kappa_0$ &$\str$            &$-\str$                                &$\frac{1}{2}\str$                   & $-\str$& $\frac{1}{2}\str$\\
$h^\vee$ &$n$                &$n+\frac{1}{2}$                     &$2n-1$                          & $n+1$& $2n$\\
$\ag$      &$\sll_{n+m}$     &$\symp_{2(n+m)}$                    &$\so_{2(n+m)+1}$          & $\symp_{2(n+m)}$& $\so_{2(n+m)+1}$\\ 
$\bg$      &$\gl_m$          &$\so_{2m+1}$                   &$\symp_{2m}$            & $\so_{2m}$& $\osp_{1|2m}$\\
$k_{\bg}^-$  &{ $-(k+n+m)+1$}  &{ $-2(k+n+m)+1$ }       &{ $-(\frac{1}{2}k+n+m)$}        &   {$-2(k+n+m)+1$}     &{$-(\frac{1}{2}k+n+m)$}\\ \hline
\end{tabular}
\end{adjustwidth}
\end{table}
\renewcommand{\arraystretch}{1}
The $\W$-superalgebra $\W^k_{X^\pm}(n,m)$ contains $V^{k_\bg^\pm}(\bg)$ as a (usually maximal) affine vertex subalgebra. {The dual Coxeter number of $\g$ is denoted by $h^\vee_{X^\pm}$.} 
Let us denote by $C_{X^\pm}^k(n,m)$ the coset subalgebra
$$C_{X^\pm}^k(n,m)=\Com\left(V^{k_\bg^\pm}(\bg),\W^k_{X^\pm}(n,m)\right).$$
For $X=B$ (resp.\ $D,O$), the $\W$-superalgebra $\W^k_{X^\pm}(n,m)$ has an automorphism group $O_{2m+1}$ (resp.\ $O_{2m}$, $OSp_{1|2m}$),
which is induced by their adjoint action on $\g$. They give rise to an automorphism group $\Z_2\simeq O_{2m+1}/SO_{2m+1}$ (resp.\ $O_{2m}/SO_{2m}$, $OSp_{1|2m}/SOSp_{1|2m}$) on $C^k_{X^\pm}(n,m)$. We set 
\begin{align*}
    D^k_{X^\pm}(n,m)=\begin{cases}
    C^k_{X^\pm}(n,m),& \text{if}\ X=A,C,\\
    C^k_{X^\pm}(n,m)^{\Z_2},& \text{if}\ X=B,D,O.
    \end{cases}
\end{align*}
Then $\W^k_{X^\pm}(n,m)$ satisfies the following properties:
\begin{itemize}
    \item[(P1)] It has a free strong generating set of type
    \begin{align*} 
        \begin{cases}\W\left(1^{\dim \bg},e_1,e_2,\cdots,e_{\mathrm{rank}\ag},\left(\frac{\dim \rho_\ag+1}{2}\right)^{2\dim \rho_\bg} \right)& \text{if}\ X=A,\\
        \W\left(1^{\dim \bg},e_1,e_2,\cdots,e_{\mathrm{rank} \ag}, \left(\frac{\dim \rho_\ag+1}{2}\right)^{\dim \rho_\bg}\right)& \text{if}\ X=B,C,D,O.
        \end{cases}
    \end{align*}
    The first $\dim \bg$ elements of conformal dimension $\Delta=1$ correspond to a basis of $\bg$ and generate $V^{k_\bg^\pm}(\bg)$. The second set which we denote by $S_{\sf coset}=\{W_{X^\pm}^i\mid i=e_1,e_2,\cdots,e_{\mathrm{rank} \ag}\}$ are all even and have conformal weights $\Delta=e_1,e_2,\cdots,e_{\mathrm{rank} \ag}$, which are exponents of $\ag$ {plus one}. 
They form a part of a strong generating set of $C^k_{X^\pm}(n,m)$ and $W_{X^\pm}^2$ is the conformal vector of $C^k_{X^\pm}(n,m)$. The third set which we denote by $S_{\sf nat}\sqcup S_{\sf nat}^\dagger$ (resp. $S_{\sf nat}$) if $X=A$ (resp.\  if $X=B,C,D,O$), corresponds to a standard basis of $\rho_\bg\oplus \rho_\bg^\dagger$ (resp.\ $\rho_\bg$). 
    They span a vector subspace $V_{\sf nat}\oplus V_{\sf nat}^\dagger=\C S_{\sf nat}\oplus \C S_{\sf nat}^\dagger$ (resp.\ $V_{\sf nat}=\C S_{\sf nat}$) whose parity and conformal weight $\Delta_\rho$ are as in Table \ref{tab: primary fields} and generate Weyl modules $\weyl^{k_\bg^\pm}_{\rho_\bg}(\bg) \oplus \weyl^{k_\bg^\pm}_{\rho_\bg^\dagger}(\bg)$ (resp.\ $\weyl^{k_\bg^\pm}_{\rho_\bg}(\bg)$).
    \item[(P2)] For $k\in \C \backslash \Q$, we may normalize the elements of $S_{\sf nat}\sqcup S_{\sf nat}^\dagger$ (resp. $S_{\sf nat}$) so that the bilinear form 
    $$V_{\sf nat}\otimes V_{\sf nat}\rightarrow \C,\quad (a,b)\mapsto a_{(2\Delta_\rho-1)}b$$
coincides with the natural $\bg$-invariant bilinear form
\begin{align*}
    \begin{cases}
    \langle\cdot|\cdot\rangle\colon (\rho_\bg\oplus \rho_\bg^\dagger)\otimes (\rho_\bg\oplus \rho_\bg^\dagger)\rightarrow \C& \text{if}\ X=A,\\
    \langle\cdot|\cdot\rangle\colon \rho_\bg \otimes \rho_\bg\rightarrow \C& \text{if}\ X=B,C,D,O,
    \end{cases} 
\end{align*}
{where we use $\rho_\bg \simeq \rho_\bg^\dagger$ for the second case.}
    Then the elements in $S_{\sf nat}\sqcup S_{\sf nat}^\dagger$ (resp. $S_{\sf nat}$) satisfy the OPEs
\begin{align*}
    A(z)B(w)\sim \frac{\langle A| B\rangle}{(z-w)^{2\Delta_\rho}}+\cdots. 
\end{align*}
\end{itemize}
\renewcommand{\arraystretch}{1.3}
\begin{table}[h]
    \caption{Primary fields}
    \label{tab: primary fields}
    \centering
\begin{tabular}{|cccccc|}
\hline
Label& $A^+$ & $B^+$ &$C^+$ &$D^+$ &$O^+$\\ \hline
Parity & $\C^{m}\oplus \overline{\C}^m$ & $\C^{2m+1}$ &$\C^{2m}$ & $\C^{2m}$ & $\Pi\C^{1|2m}$\\ 
$\Delta_\rho$ &$\frac{n+1}{2}$ &$n+1$ &$n+\frac{1}{2}$ & $n+1$ & $n+\frac{1}{2}$\\ \hline \hline
Label& $A^-$ &$B^-$ &$C^-$ &$D^-$ &$O^-$\\ \hline
Parity & $\Pi \C^{m}\oplus \Pi \overline{\C}^m$ & $\Pi \C^{2m}$ & $\Pi\C^{2m}$ & $\Pi \C^{2m}$ & $\C^{1|2m}$\\
$\Delta_\rho$ &$\frac{n+m+1}{2}$ &$n+m+\frac{1}{2}$ &$n+m+1$ & $n+m+\frac{1}{2}$& $n+m+1$\\ \hline
\end{tabular}
\end{table}
\renewcommand{\arraystretch}{1}
\begin{theorem}[\cite{CL1,CL2}]\label{characterization}
If $k\in \C\backslash \Q$, then the vertex superalgebra $\W^k_{X^\pm}(n,m)$ is the unique vertex superalgebra extension of $D^k_{X^\pm}(n,m)\otimes V^{k_\bg^\pm}(\bg)$ satisfying the properties (P1) and (P2).
\end{theorem}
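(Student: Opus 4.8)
The plan is to establish uniqueness by showing that the entire OPE algebra of any vertex superalgebra extension $\mathcal{V}$ of $D^k_{X^\pm}(n,m)\otimes V^{k_\bg^\pm}(\bg)$ satisfying (P1)--(P2) is rigidly determined by that data together with the $\bg$-symmetry and associativity, and then to produce an isomorphism with $\W^k_{X^\pm}(n,m)$ by matching strong generators. Throughout one uses crucially that $k\in\C\backslash\Q$: then $\mathbf{KL}_{k_\bg^\pm}(\bg)$ is semisimple with the Weyl modules $\weyl^{k_\bg^\pm}_\lambda(\bg)$ as a complete set of simple objects, and the associated braided tensor category is rigid. (That $\W^k_{X^\pm}(n,m)$ itself satisfies (P1)--(P2) is the structural input, obtained by analysing its associated graded algebra; we take this for granted and concentrate on uniqueness.)

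First I would describe $\mathcal{V}$ as a module over the sub-vertex-superalgebra $D^k_{X^\pm}(n,m)\otimes V^{k_\bg^\pm}(\bg)$. By (P1), $\mathcal{V}$ is strongly generated by a copy of $V^{k_\bg^\pm}(\bg)$, by the coset fields $S_{\sf coset}$ (which generate $C^k_{X^\pm}(n,m)$, of which $D^k_{X^\pm}(n,m)$ is either the whole or the $\Z_2$-orbifold), and by the natural fields $S_{\sf nat}$ (together with $S_{\sf nat}^\dagger$ in type $A$), which generate the Weyl module $\weyl^{k_\bg^\pm}_{\rho_\bg}(\bg)$, respectively also $\weyl^{k_\bg^\pm}_{\rho_\bg^\dagger}(\bg)$. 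Since repeated OPEs of the natural fields produce precisely the tensor representations of $\bg$, semisimplicity of $\mathbf{KL}_{k_\bg^\pm}(\bg)$ forces a decomposition
\[
\mathcal{V}\cong\bigoplus_{\lambda\in R(\bg)}M_\lambda\otimes\weyl^{k_\bg^\pm}_\lambda(\bg)
\]
for certain $D^k_{X^\pm}(n,m)$-modules $M_\lambda$, with $M_0=D^k_{X^\pm}(n,m)$ and $M_{\rho_\bg}$ cyclic, generated over $D^k_{X^\pm}(n,m)$ by a one-dimensional lowest-weight space of conformal weight $\Delta_\rho$ as in Table \ref{tab: primary fields}. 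One then checks that each such $M_\lambda$ is uniquely determined by these properties, again using semisimplicity together with rigidity on the coset side.

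Next I would pin down the OPEs among the generators. The affine--affine OPEs are those of $V^{k_\bg^\pm}(\bg)$; the affine--coset OPEs vanish by the coset property; the coset--coset OPEs are those of $D^k_{X^\pm}(n,m)$, known from its own strong generating set. The affine--natural OPEs are forced by the requirement that $S_{\sf nat}$ span a highest-weight $\widehat{\bg}$-module, and the coset--natural OPEs are exactly the structure maps of the $D^k_{X^\pm}(n,m)$-module $M_{\rho_\bg}$, determined in the previous step. It remains to treat the natural--natural OPEs: by the conformal-weight grading, the singular part of $A(z)B(w)$ for $A,B\in S_{\sf nat}\sqcup S_{\sf nat}^\dagger$ consists of $\bg$-equivariant combinations of normal-ordered monomials in the affine and coset generators of strictly lower conformal weight, together with natural fields when the weights permit, whose coefficients are then fixed recursively. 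Each coefficient is a $\bg$-equivariant map out of $\rho_\bg\otimes\rho_\bg$ or $\rho_\bg\otimes\rho_\bg^\dagger$, so by Schur's lemma (Lemma \ref{multi of trivial rep}) the space of such maps into each $\bg$-isotype is at most one-dimensional; hence each coefficient is a single scalar times a canonical expression. Imposing the Jacobi identity against the affine and coset generators cuts these scalars down to a zero-dimensional set once the leading terms are normalised, and the leading-term normalisation (P2) — together with the residual freedom of rescaling $S_{\sf nat}$ and $S_{\sf nat}^\dagger$ by reciprocal nonzero constants, which preserves (P1)--(P2) — fixes them outright. Since $\W^k_{X^\pm}(n,m)$ is itself an extension of this type, its generators realise the same structure constants, so the generator-matching assignment extends to an isomorphism $\mathcal{V}\cong\W^k_{X^\pm}(n,m)$.

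The main obstacle is the last point: excluding a nontrivial family of vertex-superalgebra structures on the fixed module $\bigoplus_\lambda M_\lambda\otimes\weyl^{k_\bg^\pm}_\lambda(\bg)$ with the prescribed leading OPE terms. This is exactly where irrationality of $k$ is indispensable — it forces semisimplicity and rigidity, so the gluing data of the extension lives in one-dimensional Hom-spaces and the extra deformations available at rational levels are absent — and where one must exploit associativity of the OPE among the natural fields, in particular for its subleading terms, to eliminate the remaining scalar ambiguities. After the reduction via $\bg$-equivariance this should become a finite linear-algebra problem whose solution set is a single point modulo the allowed rescaling, which is what yields uniqueness.
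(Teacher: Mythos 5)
Your overall strategy --- decompose $\mathcal{V}$ over $D^k_{X^\pm}(n,m)\otimes V^{k_\bg^\pm}(\bg)$ using semisimplicity of $\mathbf{KL}_{k_\bg^\pm}(\bg)$, reduce the unknown OPE coefficients to scalars by Schur's lemma, normalize the leading terms by (P2), and fix the rest by Jacobi identities --- is indeed the strategy of the proofs this theorem is quoted from (the paper does not reprove it; it cites \cite[Theorem 9.1]{CL1} and \cite[Theorem 6.3]{CL2}). But the decisive step in your argument, namely that ``imposing the Jacobi identity against the affine and coset generators cuts these scalars down to a zero-dimensional set once the leading terms are normalised,'' is asserted rather than proved, and it is precisely the content of the cited theorems. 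There is no soft categorical reason for it: the actual proofs go through the identification of $D^k_{X^\pm}(n,m)$ with a quotient of the universal two-parameter algebra $\W(c,\lambda)$ (or $\W^{\mathrm{ev}}(c,\lambda)$) and explicit recursive OPE computations showing that the self-OPEs of the fields in $S_{\sf nat}$ are determined by their leading term. As written, your proof reduces the theorem to an unproved claim of essentially the same difficulty.

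A second, concrete omission: the paper states explicitly, immediately after the theorem, that the proof uses (P2) together with a consequence of (P1) that you never invoke --- that the low conformal weight subspaces $\bigoplus_{i=0}^{e_{\mathrm{rank}\,\ag}} M_{X^\pm,\Delta_\rho+i}$ of the module generated by $S_{\sf nat}$ admit a \emph{PBW basis} in the modes $W^2_{X^\pm(-n)}$, $W^i_{X^\pm(-n-1)}$. Strong generation only gives that the singular coefficients of $A(z)B(w)$ for $A,B\in S_{\sf nat}$ are \emph{spanned} by normally ordered monomials in the generators; the linear independence of those monomials is what makes your undetermined coefficients well-defined coordinates, so that the ``finite linear-algebra problem'' at the end has an unambiguous solution set and the recursion closes. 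Relatedly, the claim that ``each $M_\lambda$ is uniquely determined \dots\ using semisimplicity together with rigidity on the coset side'' does not stand as stated: the module category of $D^k_{X^\pm}(n,m)$ is not semisimple, and the higher $M_\lambda$ are determined only a posteriori, as iterated products of the generating Weyl modules once the generator OPEs have been fixed.
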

The proofs in \cite[Theorem 9.1]{CL1} for $X=A$ and \cite[Theorem 6.3]{CL2} for $X=B,C,D,O$ only use (P2) together with the following property implied by (P1):
\begin{itemize}
    \item[(P1')]
    $\W^k_{X^\pm}(n,m)$ is an extension of the subalgebra $D_{X^\pm}^k(n,m)\otimes V^{k_\bg^\pm}(\bg)$ by modules 
    \begin{align*}
        \begin{cases}
            M_{A^\pm}^+\otimes \weyl^{k_\bg^\pm}_{\rho_\bg}(\bg)\oplus M_{A^\pm}^-\otimes \weyl^{k_\bg^\pm}_{\rho_\bg^\dagger}(\bg)& \text{if}\ X=A,\\
            M_{X^\pm}\otimes \weyl^{k_\bg^\pm}_{\rho_\bg}(\bg)& \text{if}\ X=B,C,D,O,
        \end{cases}
    \end{align*}
    whose lowest conformal weight subspace coincides with $V_{\sf nat}$ of the same parity and conformal weight as in Table \ref{tab: primary fields}. Moreover, the lower conformal weight subspaces 
    $$\bigoplus_{i=0}^{e_{\mathrm{rank}\ \ag}}M_{A^\pm,\Delta_\rho+i}^\pm,\quad \bigoplus_{i=0}^{e_{\mathrm{rank}\ \ag}}M_{X^\pm,\Delta_\rho+i}$$
    ($X=B,C,D,O$) has a PBW basis in terms of $W^2_{X^\pm(-n)}, W^i_{X^\pm(-n-1)}$, for $n\geq 0$ and $i=e_2,e_3,\cdots,e_{\mathrm{rank}\ \ag}$.
\end{itemize}
Suppose $k\in \C\backslash \Q$. 
Since $\mathbf{KL}_{k_\bg^\pm}(\bg)$ is semisimple, $\W^k_{X^\pm}(n,m)$ decomposes into
\begin{align}\label{branching rule}
\W^k_{X^\pm}(n,m)\simeq \bigoplus_{\lambda\in P_+} \mathcal{C}_{X^\pm}^k(\lambda)\otimes \weyl_{\lambda}^{k_\bg^\pm}(\bg)
\end{align}
as $C_{X^\pm}^k(n,m)\otimes V^{k_\bg^\pm}(\bg)$-modules. 
Here $\mathcal{C}_{X^\pm}^k(\lambda)$ is the $C_{X^\pm}^k(n,m)$-module consisting of highest weight vectors for $V^{k_\bg^\pm}(\bg)$ of highest weight $\lambda$ appearing in $\W^k_{X^\pm}(n,m)$.
It follows from (P1) and the character computation that 
\begin{align}\label{appearing h.wt}
{\{\lambda\in P_+\mid \mathcal{C}_{X^+}^k(\lambda) \neq 0\}=\{\lambda\in P_+\mid \mathcal{C}_{X^-}^k(\lambda) \neq 0\}=R,}
\end{align}
see \cite[Theorem 4.11, Remark 4.1]{CL1} for a proof in the case $X=A$ which holds also for $X=B,C,D,O$. 
In the case $X=A$, we use the decomposition 
$$V^{k_\bg^\pm}(\gl_m)\simeq V^{k_{\bg}^\pm}(\sll_m)\otimes \pi^\pm$$
where $\pi^\pm$ is the Heisenberg vertex algebra generated by a field $h^\pm(z)$ which corresponds to $\mathrm{Id}$ in the decomposition $\gl_m=\sll_m\oplus \C \mathrm{Id}$. It satisfies the OPE
$$h^+(z)h^+(w)\sim \frac{-m+\frac{mn}{n+m}(k+h^\vee_{A^+})}{(z-w)^2},\quad h^-(z)h^-(w)\sim \frac{m-\frac{m(n+m)}{n}(k+h^\vee_{A^-})}{(z-w)^2},$$
respectively.
\begin{proposition}\label{properties for generic case}\hspace{0mm} If $\g$ is a Lie algebra (resp.\ Lie superalgebra), then the follow hold.\\
\textup{(1) (\cite[Theorem 3.6]{CL1})}
$\W^k_{X^\pm}(n,m)$ is simple for all (resp.\ generic) $k\in \C\backslash \Q$.\\
\textup{(2)} 
The coset $C^k_{X^\pm}(n,m)$ is simple for all (resp.\ generic) $k\in \C\backslash \Q$.\\
\textup{(3) (\cite[Theorem 4.11]{CL1})}
The multiplicity space $\mathcal{C}^k_{X^\pm}(\lambda)$, ($\lambda\in R$), is simple as a $C^k_{X^\pm}(n,m)$-module for generic $k\in \C\backslash \Q$.
\end{proposition}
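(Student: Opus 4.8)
Since the trivial weight $0$ always lies in $R$, we have $C^k_{X^\pm}(n,m)=\mathcal C^k_{X^\pm}(0)$, and $C^k_{X^\pm}(n,m)$ is a conformally graded vertex algebra with $(C^k_{X^\pm}(n,m))_0=\C\mathbf 1$ and finite-dimensional conformal weight spaces (by (P1)). Such a vertex (super)algebra is simple as soon as it carries a non-degenerate invariant bilinear form: if $I\subsetneq V$ is a nonzero proper ideal then $\mathbf 1\notin I$, hence $I_0=0$, hence $I$ is orthogonal to $\mathbf 1$, hence $\mathbf 1\in I^\perp$; but $I^\perp$ is again an ideal, so $I^\perp=V$ and therefore $I\subseteq I^{\perp\perp}=V^\perp=0$. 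So the plan is to produce a non-degenerate invariant form on $C^k_{X^\pm}(n,m)$ by restricting the one on $\W^k_{X^\pm}(n,m)$.

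By part (1), $\W:=\W^k_{X^\pm}(n,m)$ is simple for (resp.\ generic) $k\in\C\backslash\Q$, so it carries a non-degenerate invariant bilinear form $\langle\cdot|\cdot\rangle_\W$, which is in particular $\widehat{\bg}$-invariant. By Schur's lemma together with the self-duality $(\weyl_\lambda^{k_\bg^\pm})^{\vee}\cong\weyl_{\lambda^\dagger}^{k_\bg^\pm}$ of Weyl modules (cf.\ Lemma \ref{multi of trivial rep}), in the decomposition \eqref{branching rule} the form $\langle\cdot|\cdot\rangle_\W$ pairs the $\weyl_\lambda^{k_\bg^\pm}(\bg)$-isotypic summand with the $\weyl_{\lambda^\dagger}^{k_\bg^\pm}(\bg)$-isotypic one and annihilates every other pair of summands. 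As $\dagger$ fixes $0$, the vertex subalgebra $M_0:=\weyl_0^{k_\bg^\pm}(\bg)\otimes\mathcal C^k_{X^\pm}(0)\cong V^{k_\bg^\pm}(\bg)\otimes C^k_{X^\pm}(n,m)$ --- the identification using $V^{k_\bg^\pm}(\bg)\cap C^k_{X^\pm}(n,m)=\C\mathbf 1$ --- is an orthogonal direct summand of $\W$, so $\langle\cdot|\cdot\rangle_\W$ restricts to a non-degenerate invariant form on $M_0$. By uniqueness (up to scalar) of the invariant form on a conformally graded vertex algebra with one-dimensional degree-zero part, this restriction is a nonzero multiple --- nonzero because $\langle\mathbf 1|\mathbf 1\rangle_\W\ne 0$ --- of the tensor product of the invariant forms on $V^{k_\bg^\pm}(\bg)$ and on $C^k_{X^\pm}(n,m)$; alternatively it can be read off directly from the OPEs in (P2) via Lemma \ref{inner form}, as in the construction preceding Corollary \ref{nondegeneracy under relcoh}.

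It remains to note that $V^{k_\bg^\pm}(\bg)$ is itself simple, hence has a non-degenerate invariant form: for (resp.\ generic) $k\in\C\backslash\Q$ the level $k_\bg^\pm$ is irrational, and for $X=A$ one adds that the Heisenberg field $h^\pm$ has nonzero level, so $V^{k_\bg^\pm}(\gl_m)\cong V^{k_\bg^\pm}(\sll_m)\otimes\pi^\pm$ is simple. Since a scalar multiple of a tensor product of bilinear forms is non-degenerate if and only if both tensor factors are, the non-degeneracy of $\langle\cdot|\cdot\rangle_\W|_{M_0}$ forces the invariant form on $C^k_{X^\pm}(n,m)$ to be non-degenerate, whence $C^k_{X^\pm}(n,m)$ is simple by the first paragraph. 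I expect the only delicate step to be the factorization in the second paragraph --- identifying the restriction of $\langle\cdot|\cdot\rangle_\W$ to $M_0$ with a rescaled tensor product, as opposed to some a priori different invariant form --- which rests on the mutual orthogonality of distinct isotypic components and on uniqueness of invariant forms; the remaining steps are formal.
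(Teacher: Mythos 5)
Your reduction of simplicity to non-degeneracy of an invariant bilinear form is internally consistent, but it rests on an unjustified assertion right at the start: that simplicity of $\W^k_{X^\pm}(n,m)$ (and later of $V^{k_\bg^\pm}(\bg)$) produces a non-degenerate invariant bilinear form. By Li's theorem, for a $\Z_{\geq 0}$-graded vertex algebra $V$ with $V_0=\C\mathbf{1}$ the space of invariant bilinear forms is $(V_0/L_1V_1)^*$; so existence requires $L_1V_1=0$, i.e.\ that every weight-one field is primary, and this is not implied by simplicity. Here $\W^k_{X^\pm}(n,m)_1\supseteq \bg$, and for $X=A$ the subalgebra $\bg=\gl_m$ contains the Heisenberg current $h^{\pm}$; if that current had a nonzero third-order pole against the conformal vector (equivalently $L_1h^{\pm}\neq 0$), one would get $L_1\W_1=\C\mathbf{1}$ and hence \emph{no} nonzero invariant form at all --- this is exactly the phenomenon of a simple Heisenberg vertex algebra with a shifted conformal vector, which is simple but admits no invariant form in that grading. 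The same unverified hypothesis is used again when you invoke uniqueness of the form on $M_0$ to identify the restriction with a tensor product, and when you pass from simplicity of $V^{k_\bg^\pm}(\gl_m)$ to the existence of a non-degenerate form on it. To make the argument work you would need to verify from the structure theory of \cite{KRW,KW} that all $\Delta=1$ generators are primary for the conformal vector used here, and also adapt the invariant-form formalism to $\tfrac{1}{2}\Z_{\geq 0}$-graded vertex superalgebras (choice of parity/twist in the opposition operator), none of which is addressed.

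For comparison, the paper does not argue via bilinear forms at all: part (2) is deduced from part (1) by citing \cite[Proposition 5.4]{CGN}, whose proof is a direct ideal argument. A nonzero ideal $I$ of $C^k_{X^\pm}(n,m)=\mathcal{C}^k_{X^\pm}(0)$ generates a nonzero ideal of $\W^k_{X^\pm}(n,m)$, which equals the whole algebra by (1); using the semisimplicity of $\mathbf{KL}_{k_\bg^\pm}(\bg)$ and the isotypic decomposition \eqref{branching rule} one projects back to the $\lambda=0$ component and concludes $I=C^k_{X^\pm}(n,m)$. That route avoids every one of the issues above, so I would recommend either supplying the missing primarity/grading verifications or switching to the projection argument.
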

\noindent
We note that the second statement (2) follows from (1) by \cite[Proposition 5.4]{CGN}.

\subsection{Duality in $\W^k_{X\pm}(n,m)$}
Let $(X,Y)$ denote one of the pairs
\begin{align*}
(A,A),\quad  (B,O),\quad (C,C),\quad (D,D),\quad (O,B).
\end{align*}
The following statement, which naturally generalizes Feigin-Frenkel duality in the above types, was conjectured by Gaiotto--Rap\v{c}\'{a}k \cite{GR} in the physics literature and proven by two of the authors for generic levels.
\begin{theorem}[\cite{CL1,CL2}]\label{duality theorem}
For generic $k,\ell\in \C\backslash \Q$, there is an isomorphism of vertex algebras
\begin{align}\label{Gaiotto-rapcak duality}
     D^k_{X^+}(n,m)\simeq D^\ell_{Y^-}(n,m),
\end{align}
if the levels $(k,\ell)$ satisfy the relation
\begin{align}\label{duality rel}
    r_X(k+h^\vee_{X^+})(\ell+h^\vee_{Y^-})=1
\end{align}
with $r_X$ given in Table \ref{tab: lacing number}.
Moreover,  if $X^+=A^+$, $B^+$, $C^+$, $D^+$, then \eqref{Gaiotto-rapcak duality} holds for all $k\in \C\backslash \Q$. 
\end{theorem}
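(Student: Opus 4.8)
\textbf{Proof proposal for Theorem \ref{duality theorem}.}

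The plan is to reduce the isomorphism \eqref{Gaiotto-rapcak duality} to a characterization result: each side $D^k_{X^+}(n,m)$ and $D^\ell_{Y^-}(n,m)$ should be recognized as the unique vertex algebra, up to isomorphism, generated by fields of prescribed conformal weights $e_1,\dots,e_{\mathrm{rank}\ \ag}$ (the exponents of $\ag$ shifted by one), with a $\mathbb{Z}_2$-equivariant structure, whose OPEs are determined by finitely many structure constants that depend rationally (or algebraically) on the level. First I would invoke property (P1) from the previous subsection to pin down the strong generating types of $C^k_{X^+}(n,m)$ and $C^\ell_{Y^-}(n,m)$, observe that under the Langlands-type duality \eqref{duality rel} these generating types agree (this is where the choice of pairs $(A,A),(B,O),(C,C),(D,D),(O,B)$ and the lacing numbers $r_X$ in Table \ref{tab: lacing number} enters), and then pass to the $\mathbb{Z}_2$-invariants to obtain matching generating data for the $D$-cosets in the $B,D,O$ cases.

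The central step is an explicit computation of the OPE structure constants among the generators $W^i_{X^\pm}$ of $C^k_{X^\pm}(n,m)$, carried out with the deformable-family / coincidence-of-structure-constants technology (the approach of \cite{CL1,CL2} and the earlier work on $\W_\infty$-type algebras). Concretely I would work with a one-parameter family of vertex algebras over a localization of $\C[k]$, show that all OPE coefficients lie in this ring, and verify that they are uniquely determined by a bounded set of normalization and Jacobi-identity constraints; the same analysis applied to the $Y^-$ side produces structure constants that are related to those of the $X^+$ side by the substitution dictated by \eqref{duality rel}. Matching the two families then yields an isomorphism of the generic fibers, hence \eqref{Gaiotto-rapcak duality} for generic $k,\ell$. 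For the final clause --- that when $X^+\in\{A^+,B^+,C^+,D^+\}$ the isomorphism holds for \emph{all} $k\in\C\backslash\Q$ --- I would use Proposition \ref{properties for generic case}(1)--(2): in the Lie algebra (non-super) cases $\W^k_{X^+}(n,m)$ and its coset $C^k_{X^+}(n,m)$ are simple for \emph{all} $k\in\C\backslash\Q$, so the deformable family has no degenerate fibers away from rational $k$, and the isomorphism of generic fibers propagates to every such $k$ by a standard specialization argument (no poles of the structure constants occur in $\C\backslash\Q$).

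The main obstacle I anticipate is controlling the OPE structure constants in closed enough form to see the duality substitution transparently: the generators of conformal weight $>2$ have OPEs whose normally-ordered correction terms proliferate, and one must either find a clean inductive mechanism (e.g. realizing $C^k_{X^\pm}(n,m)$ inside a larger, better-understood algebra and transporting the computation there, as in the free-field or coset realizations used in \cite{ACL,CL2}) or else verify a uniqueness statement strong enough that only the weight-$\le 3$ data need be computed by hand. A secondary subtlety is the bookkeeping of the $\mathbb{Z}_2$-action and the passage $C\rightsquigarrow D=C^{\mathbb{Z}_2}$ in types $B,D,O$, since the duality naturally swaps the $\so$ / $\osp$ roles (reflected in the pairing of $B$ with $O$), and one must check that the outer automorphism on one side corresponds to the outer automorphism on the other under the generic-fiber isomorphism before taking invariants.
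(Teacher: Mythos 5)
Your strategy for the generic statement is essentially the one the paper (following \cite{CL1,CL2}) uses: both $D^k_{X^+}(n,m)$ and $D^\ell_{Y^-}(n,m)$ are realized as quotients of the universal two-parameter algebra $\W(c,\lambda)$ of \cite{L} (type $A$) or the even-spin algebra $\W^{\mathrm{ev}}(c,\lambda)$ of \cite{KL} (types $B,C,D,O$), and one checks that the relation \eqref{duality rel} produces the same specialization of $(c,\lambda)$ on both sides. The ``larger, better-understood algebra'' you anticipate needing is exactly this universal object, and the uniqueness statement that lets one avoid computing high-weight OPEs is built into its construction. So the generic case is in order (and is in any case only cited, not reproved, in this paper).

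The gap is in your deduction of the final clause. ``No poles of the structure constants in $\C\backslash\Q$'' together with simplicity on the $X^+$ side does not by itself propagate the isomorphism: the obstruction at a non-generic irrational level is not a pole but the possibility that one of the two quotient maps from the universal algebra has a strictly larger kernel than the other, and on the $Y^-$ (super) side simplicity is \emph{not} available in advance for all irrational $\ell$ --- indeed Corollary \ref{simplicity at irrational levels} derives it from the main theorem, so it cannot be an input here. The paper's argument runs as follows: since both cosets are quotients of the same $\W(c,\lambda)$ (resp.\ $\W^{\mathrm{ev}}(c,\lambda)$) at the same point, their simple quotients coincide; simplicity of $C^k_{X^+}(n,m)$ for all irrational $k$ (Proposition \ref{properties for generic case}(2)) together with \cite{DLM} gives simplicity of $D^k_{X^+}(n,m)$, hence a surjection $D^\ell_{Y^-}(n,m)\twoheadrightarrow D^k_{X^+}(n,m)$; and this surjection is an isomorphism because the $q$-characters of the two sides agree for \emph{all} irrational levels, by the free strong generation in (P1). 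You invoke the correct simplicity input but use it only to assert ``no degenerate fibers'' (which is false to hope for on the $Y^-$ side a priori), and you omit the character comparison that actually forces injectivity; as written, your argument does not exclude that $D^\ell_{Y^-}(n,m)$ properly surjects onto $D^k_{X^+}(n,m)$ at some special irrational $\ell$.
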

\proof
{Since the second statement is essentially obtained in the proof of \cite{CL1} when $X=A$ (resp.\  \cite{CL2} when $X=B, C, D$) for the generic case, we only sketch the proof.
By the same proof in {\it loc.\ cit}}, $D^k_{X^+}(n,m)$ and $D^\ell_{Y^-}(n,m)$ are quotients of the two-parameter $\W_\infty$-algebra $\W(c,\lambda)$ constructed in \cite{L} (resp.\ the even spin $\W_\infty$-algebra $\W^{\mathrm{ev}}(c,\lambda)$ in \cite{KL}) if $X=A$ (resp.\ $X=B,C,D,O$) with the same specialization for $(c,\lambda)$.
Then it follows that the simple quotients of $D^k_{X^+}(n,m)$ and $D^\ell_{Y^-}(n,m)$ are isomorphic as vertex algebras. 
On the other hand, by Proposition \ref{properties for generic case} (2), $C^k_{X^+}(n,m)$ is simple as a vertex algebra and thus so is $D^k_{X^+}(n,m)$ by \cite{DLM}. 
Therefore, we have a surjective homomorphism $D^\ell_{Y^-}(n,m)\twoheadrightarrow D^k_{X^+}(n,m)$ of vertex algebras, which is an isomorphism for generic $k\in \C\backslash \Q$. Since their $q$-characters are the same for all $k,\ell\in \C\backslash \Q$, $D^\ell_{Y^-}(n,m)\xrightarrow{\simeq} D^k_{X^+}(n,m)$ holds for all $k\in \C\backslash \Q$.
\endproof
\renewcommand{\arraystretch}{1.2}
\begin{table}[h]
    \caption{}
    \label{tab: lacing number}
    \centering
    \begin{tabular}{|cccccc|}
\hline
$X$ & $A$ & $B$ & $C$ & $D$ & $O$ \\ \hline
$r_X$ & 1 &1& 2&2&4\\ \hline
    \end{tabular}
\end{table}
\renewcommand{\arraystretch}{1}
In the remainder of this subsection, the statements for type $X=B, O$ with $m>1$ are understood under the assumption for the existence of the simple vertex superalgebras $\KVA{\pm1}[\so_{2m+1},k]$ (and thus $\KVA{\pm1}[\osp_{1|2m},\ell]$).
The main results of this paper is the following theorem conjectured by two of the authors \cite{CL1, CL2}:
\begin{theorem}\label{Main results}
For $k,\ell \in \C\backslash \Q$ under \eqref{duality rel}, there exist isomorphisms of vertex superalgebras
\begin{align}
  \label{relpm}  &\W^\ell_{Y^-}(n,m)\simeq \relsemicoh{0}\left(\bg, \W^k_{X^+}(n,m)\otimes \KVA{1}[\bg,\alpha_\tp]\right),\\
  \label{relmp} &\W^k_{X^+}(n,m)\simeq \relsemicoh{0}\left(\bg, \W^\ell_{Y^-}(n,m)\otimes \KVA{-1}[\bg,\alpha_\tn]\right),
\end{align}
where $(\alpha_\tp,\alpha_\tn)$ are given by 
\begin{align}\label{alpha-index}
    \alpha_\tp=-p_\tp(k+h^\vee_{X^\tp})+q_{{\scalebox{0.5}{$+$}}}-h_{\bg}^\vee,\quad \alpha_\tn=p_\tn(\ell+h^\vee_{Y^\tn})-q_\tn-h_{\bg}^\vee
\end{align}
with $(p_\tpn,q_\tpn)$ as in Table \ref{tab: pq list} below.
\end{theorem}
\renewcommand{\arraystretch}{1.2}
\begin{table}[h]
    \caption{}
    \label{tab: pq list}
    \centering
    \begin{tabular}{|cccccc|}
\hline
$X$ & $A$ & $B$ & $C$ & $D$ & $O$ \\ \hline
$(p_\tp,q_\tp)$& $(1,1)$& $(1,1)$& $(1,\tfrac{1}{2})$ & $(1,1)$ &$(1,\tfrac{1}{2})$  \\
$(p_\tn,q_\tn)$ & $(1,1)$ & $(2,1)$ & $(\tfrac{1}{2},\tfrac{1}{2})$ &$(2,1)$ &$(\tfrac{1}{2},\tfrac{1}{2})$  \\ \hline
    \end{tabular}
\end{table}
\renewcommand{\arraystretch}{1}
We will give a proof of Theorem \ref{Main results} in \S\ref{sec: Proof of Main Theorem}.
By taking the affine cosets of both sides of \eqref{relpm}, we obtain the following improvement of Theorem \ref{duality theorem}:
\begin{corollary}\label{duality theorem without orbifold}
For $k,\ell\in \C\backslash \Q$ under \eqref{duality rel}, there exists an isomorphism of vertex algebras
\begin{align*}
     C^k_{X^+}(n,m)\simeq C^\ell_{Y^-}(n,m).
\end{align*}
\end{corollary}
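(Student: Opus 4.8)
My plan is to pass to the affine cosets on both sides of \eqref{relpm}. Write ${}^s\bg$ for the centralizer attached to $Y^-$ (it equals $\bg$ except when $(X,Y)\in\{(B,O),(O,B)\}$) and $V^{\ell'}({}^s\bg)$ for its maximal affine vertex subalgebra in $\W^\ell_{Y^-}(n,m)$, so that $C^\ell_{Y^-}(n,m)=\Com(V^{\ell'}({}^s\bg),\W^\ell_{Y^-}(n,m))$ by definition. On the right-hand side, the relative semi-infinite cohomology carries the affine vertex subalgebra $V^{\ell_\bg}({}^s\bg)$ coming from the second tensor factor of the kernel $A^1[\bg,\alpha_+]=\bigoplus_{\lambda\in R}\weyl_\bg^{\alpha_+}(\lambda)\otimes\weyl_{{}^s\bg}^{\ell_\bg}({}^s\lambda)$, which is untouched by the reduction. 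The isomorphism of Theorem \ref{Main results}, as built in \S\ref{sec: Proof of Main Theorem} by matching strong generating fields, identifies these two affine subalgebras; in particular it forces $\ell'=\ell_\bg$, a level identity one can also read off directly from \eqref{gluing condition}, the formula for $\alpha_+$, and \eqref{duality rel} using Tables \ref{tab: abc} and \ref{tab: pq list}. Since $\Com(-,-)$ is functorial under vertex superalgebra isomorphisms carrying one subalgebra onto another, it then suffices to compute the affine coset $\Com\big(V^{\ell_\bg}({}^s\bg),\,\relsemicoh{0}(\bg,\W^k_{X^+}(n,m)\otimes A^1[\bg,\alpha_+])\big)$.

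For this I would feed the decomposition \eqref{branching rule} and the decomposition of $A^1[\bg,\alpha_+]$ above into Theorem \ref{main theorem of relcoh}, applied to the diagonal $\bg$-complex: its level hypothesis holds because $\alpha_+$ is chosen precisely so that $k_\bg^++\alpha_+=-2h^\vee_\bg$, while the semisimple parts sit at irrational level and the abelian parts at nonzero level for $k\in\C\backslash\Q$. Only the summands whose $\bg$-weights are dual to each other survive, and only in cohomological degree $0$, so
$$\relsemicoh{m}\big(\bg,\W^k_{X^+}(n,m)\otimes A^1[\bg,\alpha_+]\big)\simeq\delta_{m,0}\bigoplus_{\lambda\in R}\mathcal{C}^k_{X^+}(\lambda)\otimes\weyl_{{}^s\bg}^{\ell_\bg}\big({}^s(\lambda^\dagger)\big)$$
as a $C^k_{X^+}(n,m)\otimes V^{\ell_\bg}({}^s\bg)$-module. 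A vector lies in the coset by $V^{\ell_\bg}({}^s\bg)$ iff it is killed by $\widehat{{}^s\bg}_{\geq0}$, and inside $\weyl_{{}^s\bg}^{\ell_\bg}(\nu)$ such vectors form the ${}^s\bg$-invariant subspace of the lowest-weight space $L_\nu$, which vanishes unless $\nu=0$. Since ${}^s(\lambda^\dagger)=0$ exactly for $\lambda=0$, the affine coset equals $\mathcal{C}^k_{X^+}(0)=C^k_{X^+}(n,m)$; together with the first paragraph this gives $C^k_{X^+}(n,m)\simeq C^\ell_{Y^-}(n,m)$.

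The only point that is not formal bookkeeping is the compatibility of the isomorphism of Theorem \ref{Main results} with the two maximal affine vertex subalgebras — equivalently the level identity $\ell'=\ell_\bg$. I expect this to be immediate from the construction in \S\ref{sec: Proof of Main Theorem}, where the affine currents of $\W^\ell_{Y^-}(n,m)$ are matched with the currents of the affine subalgebra of $A^1[\bg,\alpha_+]$; it is forced in any case once the vertex superalgebra isomorphism is in hand, since the level of an affine vertex subalgebra is an isomorphism invariant and the relevant subalgebra is intrinsically characterized. A useful consistency check is that running the same argument with \eqref{relmp} in place of \eqref{relpm} produces the same isomorphism $C^\ell_{Y^-}(n,m)\simeq C^k_{X^+}(n,m)$ from the other side. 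Note that the coset computation of the second paragraph is itself insensitive to the presence of any additional low-weight fields on either side of \eqref{relpm}, since it uses only the conformal weight decomposition and the induced-module structure of the Weyl modules.
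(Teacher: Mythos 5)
Your argument is correct, and it is a faithful fleshing-out of the one-line justification the paper itself gives for this corollary (``by taking the affine cosets of both hand sides of \eqref{relpm}''): you identify the maximal affine subalgebra of $\W^\ell_{Y^-}(n,m)$ with the untouched affine factor $V^{\ell_{{}^s\bg}^-}({}^s\bg)$ of the kernel inside the cohomology, and then compute the commutant of a Weyl module at irrational level to isolate the $\lambda=0$ multiplicity space $\mathcal{C}^k_{X^+}(0)=C^k_{X^+}(n,m)$. The one point worth flagging is that the paper's \emph{actual} derivation lives inside the proof of Theorem \ref{Main results} and runs in the opposite logical direction: there one first obtains only an injective homomorphism \eqref{hom from non to relcoh}, hence inclusions $\mathcal{C}^k_{X^+}(\lambda)\hookrightarrow\mathcal{C}^\ell_{Y^-}({}^s\lambda)$ and in particular $C^k_{X^+}(n,m)\hookrightarrow C^\ell_{Y^-}(n,m)$, and then upgrades this last inclusion to an isomorphism using Theorem \ref{duality theorem} --- immediately for $X=A,C$ where $C=D$, and for $X=B,D,O$ by observing that both cosets are order-two extensions of the common algebra $D^k_{X^+}(n,m)\simeq D^\ell_{Y^-}(n,m)$. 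That version is needed \emph{before} the full Theorem \ref{Main results} is established, since the isomorphism of cosets feeds into the character argument completing \eqref{relmp}; your version is a clean a posteriori deduction from the finished theorem, which is logically fine (no circularity) but could not replace the in-proof argument. Your auxiliary claims check out: the level identity $\ell'=\ell_{{}^s\bg}^-$ is exactly what the displayed decomposition in \S\ref{sec: Proof of Main Theorem} records, the isomorphism is constructed as one of $D^\ell_{Y^-}(n,m)\otimes V^{\ell_\bg^-}(\bg)$-modules so it does match the two affine subalgebras, and at irrational level the space of $\widehat{{}^s\bg}_{\geq 0}$-invariants of $\weyl^{\ell_\bg}_{{}^s\bg}(\nu)$ is indeed $\delta_{\nu,0}\C$ by simplicity of the Weyl module. (Minor bookkeeping: whether the surviving Weyl module is labelled by ${}^s\lambda$ or ${}^s(\lambda^\dagger)$ depends on the convention for the kernel decomposition; this does not affect the $\lambda=0$ term.)
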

\noindent
Actually, this corollary will be obtained in the proof of Theorem \ref{Main results}. {We remark that $C^k_{X^+}(n,m)$ and $C^\ell_{Y^-}(n,m)$ are simple current extensions of $D^k_{X^+}(n,m)$ and $D^\ell_{Y^-}(n,m)$, respectively, and are generated as modules over $D^k_{X^+}(n,m) \simeq D^\ell_{Y^-}(n,m)$ by a field whose conformal weight and parity is the same in both cases. Another approach to proving Corollary \ref{duality theorem without orbifold} is to prove the uniqueness of this simple current extension; see \cite[Remark 4.2]{CL2}, which outlines this approach in the case $X = D$ and $m=1$.}
\begin{corollary}\label{simplicity at irrational levels}
For  $Y=A,C,D,O$, $\W^\ell_{Y^-}(n,m)$ is simple for all $\ell\in \C\backslash \Q$.
\end{corollary}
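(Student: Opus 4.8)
The plan is to deduce simplicity of $\W^\ell_{Y^-}(n,m)$ from the isomorphism \eqref{relpm} together with the simplicity results already available for the plus-type $\W$-algebras. By Proposition \ref{properties for generic case}(1), when $\g$ is a Lie algebra, $\W^k_{X^+}(n,m)$ is simple for \emph{all} $k \in \C\backslash\Q$; for the pairs $(X,Y)$ with $Y \in \{A,C,D,O\}$ the partner $X$ lies in $\{A,C,D,B\}$, and in each of these cases the relevant $\g$ in Table \ref{tab: hopok-type Walg} (namely $\sll_{n+m}$, $\symp_{2(n+m)}$, $\so_{2(n+m)+1}$, $\so_{2(n+m+1)}$) is an ordinary Lie algebra. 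So $\W^k_{X^+}(n,m)$ is simple for all $k\in\C\backslash\Q$. One then wants to transport this simplicity across the convolution isomorphism of Theorem \ref{Main results}.

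Here is the sequence of steps I would carry out. First, fix $\ell \in \C\backslash\Q$ and let $k$ be the dual level determined by \eqref{duality rel}; note $k \in \C\backslash\Q$ as well, since \eqref{duality rel} is a rational (in fact M\"obius) relation over $\Q$. Second, invoke \eqref{relpm} to identify $\W^\ell_{Y^-}(n,m)$ with $\relsemicoh{0}(\bg,\W^k_{X^+}(n,m)\otimes A^1[\bg,\alpha_+])$. Third, use the branching \eqref{branching rule} of $\W^k_{X^+}(n,m)$ over $C^k_{X^+}(n,m)\otimes V^{k_\bg}(\bg)$ together with the computation of the cohomology (the formula in the introduction, resting on Theorem \ref{main theorem of relcoh}) to write $\W^\ell_{Y^-}(n,m) \cong \bigoplus_{\lambda\in R}\mathcal{C}^k_{X^+}(\lambda)\otimes \weyl^{\ell_\bg}_{{}^s\lambda}(\bg)$ as a $C^k_{X^+}(n,m)\otimes V^{\ell_\bg}(\bg) \cong C^\ell_{Y^-}(n,m)\otimes V^{\ell_\bg}(\bg)$-module, where the isomorphism of cosets is Corollary \ref{duality theorem without orbifold}. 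Fourth, I would argue that any nonzero ideal $I$ of $\W^\ell_{Y^-}(n,m)$ must, after intersecting with the $V^{\ell_\bg}(\bg)$-isotypic decomposition and using semisimplicity of $\KL_{\ell_\bg}(\bg)$, contain some nonzero $C^\ell_{Y^-}(n,m)$-submodule of some $\mathcal{C}^k_{X^+}(\lambda)\otimes\weyl^{\ell_\bg}_{{}^s\lambda}(\bg)$; combined with the fact that $\W^\ell_{Y^-}(n,m)$ is generated as a vertex algebra by its $\lambda=0$ summand together with the natural fields (property (P1)) and that these summands are linked by OPEs (the pairing of Lemma \ref{tentative}/Corollary \ref{nondegeneracy under relcoh} is nondegenerate), the ideal must be everything.

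The main obstacle is the fourth step: propagating ``$I$ hits one summand nontrivially'' to ``$I$ is the whole algebra.'' The cleanest route is probably not the direct module-theoretic argument sketched above but rather to run the convolution in reverse: apply $\relsemicoh{0}(\bg, - \otimes A^{-1}[\bg,\alpha_-])$ to the short exact sequence $0 \to I \to \W^\ell_{Y^-}(n,m) \to \W^\ell_{Y^-}(n,m)/I \to 0$, using that this functor is exact in the relevant range (all objects decompose into finite-dimensional subcomplexes by Remark \ref{decomposition of complexes}, and the cohomology is concentrated in degree $0$ by Theorem \ref{main theorem of relcoh}, so higher cohomology vanishes and exactness holds), to obtain an ideal $\widetilde{I}$ of $\relsemicoh{0}(\bg,\W^\ell_{Y^-}(n,m)\otimes A^{-1}[\bg,\alpha_-]) \cong \W^k_{X^+}(n,m)$. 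Since the latter is simple, $\widetilde I$ is $0$ or everything; one then needs the faithfulness statement that $\widetilde I = 0$ forces $I = 0$ and $\widetilde I$ everything forces $I$ everything. This in turn reduces to checking that the double convolution $\relsemicoh{0}(\bg, \relsemicoh{0}(\bg, - \otimes A^1[\bg,\alpha_+])\otimes A^{-1}[\bg,\alpha_-])$ recovers the identity functor on the relevant category of $\W^k_{X^+}(n,m)$-modules up to natural isomorphism — essentially the statement, implicit in \eqref{relpm}--\eqref{relmp}, that the two convolutions are mutually inverse. Granting that round-trip identity (which should follow by the same cohomology computation, using $A^1[\bg,\alpha_+]$ and $A^{-1}[\bg,\alpha_-]$ and the nondegeneracy of the pairing of Lemma \ref{tentative} to see that the composite kernel collapses to the trivial summand), simplicity of $\W^\ell_{Y^-}(n,m)$ is immediate. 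I expect the bookkeeping of conformal weights and parities in matching up $(\alpha_+,\alpha_-)$ so that the composite is genuinely the identity to be the only delicate point, and it should be handled exactly as in the proof of Theorem \ref{Main results} in \S\ref{sec: Proof of Main Theorem}.
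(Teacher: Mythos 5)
Your overall strategy is the paper's: transport a nonzero ideal $\mathscr{I}\subset \W^\ell_{Y^-}(n,m)$ through the reverse convolution $\relsemicoh{0}(\bg,\,?\otimes A^{-1}[\bg,\alpha_-])$ to obtain an ideal of $\W^k_{X^+}(n,m)$, and then invoke Proposition \ref{properties for generic case}(1), which applies for all $k\in\C\backslash\Q$ precisely because the partner types $X\in\{A,C,D,B\}$ have $\g$ an ordinary Lie algebra. Where you diverge is in how you close the faithfulness step, and there you leave a genuine gap: you reduce ``$\widetilde{I}=0\Rightarrow I=0$'' and ``$\widetilde{I}$ everything $\Rightarrow I$ everything'' to the claim that the double convolution is naturally isomorphic to the identity functor, and then you only ``grant'' that round-trip identity rather than prove it. That identity is not established in the paper and is not needed. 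The direct argument, which you already have all the ingredients for, is: since $\KL_{\ell_\bg^-}(\bg)$ is semisimple, the ideal itself decomposes under \eqref{branching rule} as $\mathscr{I}=\bigoplus_{\lambda\in R}\mathscr{I}_\lambda\otimes\weyl_\lambda^{\ell_\bg^-}(\bg)$ with $\mathscr{I}_\lambda\subset\mathcal{C}^\ell_{Y^-}(\lambda)$; a single application of Theorem \ref{main theorem of relcoh} then computes $\mathscr{J}=\relsemicoh{0}(\bg,\mathscr{I}\otimes A^{-1}[\bg,\alpha_-])\simeq\bigoplus_{\lambda\in R}\mathscr{I}_\lambda\otimes\weyl_{{}^s\lambda}^{k_\bg^+}({}^s\bg)$, i.e.\ the functor keeps every multiplicity space $\mathscr{I}_\lambda$ and merely swaps the Weyl-module factors. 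Hence $\mathscr{J}\neq0$ whenever $\mathscr{I}\neq0$, and $\mathscr{J}=\W^k_{X^+}(n,m)$ forces $\mathscr{I}_\lambda=\mathcal{C}^\ell_{Y^-}(\lambda)$ for all $\lambda$, so $\mathscr{I}=\W^\ell_{Y^-}(n,m)$. No exactness of the functor on short exact sequences and no composite-kernel collapse is required. Your first, purely module-theoretic route (propagating via OPEs and the nondegenerate pairing) is likewise superseded by this observation and can be dropped.
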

\proof
Take a nonzero ideal $\mathscr{I}$ of $\W^\ell_{Y^-}(n,m)$.
Since $\mathbf{KL}_{\ell_\bg^-}(\bg)$ is semisimple, $\mathscr{I}$ decomposes into 
$$\mathscr{I}=\bigoplus_{\lambda\in R}\mathscr{I}_\lambda\otimes \weyl_{\lambda}^{\ell_\bg^-}(\bg)$$
under \eqref{branching rule} for some $C_{Y^-}^\ell(n,m)$-submodules $\mathscr{I}_\lambda\subset \mathcal{C}_{Y^-}^\ell(\lambda)$. Applying the functor $\relsemicoh{0}\left(\bg, ? \otimes \KVA{-1}[\bg,\alpha_\tn]\right)$, we obtain a nonzero ideal 
$$\mathscr{J}=\relsemicoh{0}\left(\bg,\mathscr{I}\otimes \KVA{-1}[\bg,\alpha_\tn]\right)\simeq \bigoplus_{\lambda\in R}\mathscr{I}_\lambda\otimes \weyl_{\lambda}^{k_{\bg}^+}(\bg)$$
of $\W^k_{X^+}(n,m)$ by Theorem \ref{Main results}.
It follows from Proposition \ref{properties for generic case} (1) that $\mathscr{J}=\W^k_{X^+}(n,m)$ and thus $\mathscr{I}=\W^\ell_{Y^-}(n,m)$. 
\endproof
\subsection{Proof of Theorem \ref{Main results}}\label{sec: Proof of Main Theorem}
Let us write $\bg_\tp$ (resp.\ $\bg_\tn$) instead of $\bg$ for the $\W$-superalgebra $\W^k_{X^+}(n,m)$ (resp.\ $\W^\ell_{Y^-}(n,m)$) for clarity.
It is immediate from the equality
$${k_{\bg_\tp}^++\alpha_\tp=-2h^\vee_{\bg_\tp},\qquad \ell_{\bg_\tn}^-+\alpha_\tn=-2h^\vee_{\bg_\tn}}$$
that we may apply $\relsemicoh{0}(\bg_\tpn,?)$ in the right-hand side of \eqref{relpm} and \eqref{relmp} except for $\relsemicoh{0}(\gl_1,?)$ appearing in $\relsemicoh{0}(\gl_m,{?})$. The case $\relsemicoh{0}(\gl_1,?)$ follows from the isomorphism of the Heisenberg vertex algebras
\begin{align*}
\begin{array}{ccc}
\pi^{\ssqrt{-1}h^\pm}\otimes \pi^{h^\mp}&\simeq &\pi^{\alpha_{\ssqrt{\pm m}}}\otimes \pi^{\alpha_{\sqrt{\pm m}}}\\
\ssqrt{-1}h^{\pm}(z)\otimes 1&\mapsto & \alpha_{\ssqrt{\pm m}}(z)\otimes 1\mp u^{\pm}\otimes\alpha_{\ssqrt{\pm m}}(z)\\
1\otimes h^{\mp}(z)&\mapsto &-\alpha_{\ssqrt{\pm m}}(z)\otimes 1\mp u^{\mp}\otimes \alpha_{\ssqrt{\pm m}}(z),
\end{array}
\end{align*}
where $u=\sqrt{\tfrac{-n}{m+n}(k+h^\vee_{A^+})}$ and thus $u^{-1}=\sqrt{\tfrac{m+n}{-n}(\ell+h^\vee_{A^-})}$.
Here $\alpha_{\sqrt{\pm m}}(z)$ is the Heisenberg field satisfying the OPE
$\alpha_{\sqrt{\pm m}}(z)\alpha_{\sqrt{\pm m}}(w)\sim \pm m/(z-w)^2$.
It induces isomorphisms of their Fock modules
\begin{align*}
\pi^{\alpha_{\sqrt{\pm m}}}_a\otimes \pi^{\alpha_{\sqrt{\pm m}}} \simeq \pi^{\ssqrt{-1}h^{\pm}}_{a}\otimes \pi^{h^\mp}_{-a},\quad (a\in \C).
\end{align*}
{By using Theorem \ref{main theorem of relcoh}, we obtain, case by case, isomorphisms
\begin{align*}
\mathscr{W}^\ell_{Y^-}&:=\relsemicoh{0}\left(\bg_\tp, \W^k_{X^+}(n,m)\otimes \KVA{1}[\bg_\tp,\alpha_\tp]\right)
 \simeq \bigoplus_{\lambda\in R}\mathcal{C}_{X^+}^k(\lambda)\otimes \weyl_{\op{\lambda}}^{\ell^-_{\bg_\tn}}(\bg_\tn),\\
\mathscr{W}^k_{X^+}&:=\relsemicoh{0}\left(\bg_\tn, \W^\ell_{Y^-}(n,m)\otimes \KVA{-1}[\bg_\tn,\alpha_\tn]\right)
\simeq 
\bigoplus_{\lambda\in R}\mathcal{C}_{Y^-}^\ell(\lambda)\otimes \weyl_{\op{\lambda}}^{k^+_{\bg_\tp}}(\bg_\tp),
\end{align*}
as modules over $C^k_{X^+}(n,m)\otimes V^{\ell^-_{\bg_\tn}}(\bg_\tn)$ and $C^k_{Y^-}(n,m)\otimes V^{k^+_{\bg_\tp}}(\bg_\tp)$, respectively.
Here, we have used
\begin{align*}
    V^{\op{\alpha}_\tp}(\op{\bg}_\tp)\simeq V^{\ell^-_{\bg_\tn}}(\bg_\tn),
    \qquad V^{\op{\alpha}_\tn}(\op{\bg}_\tn)\simeq V^{k^+_{\bg_\tp}}(\bg_\tp),
\end{align*}
which can be checked by using the relations \eqref{final relations}, \eqref{duality rel} and \eqref{alpha-index}.}
By using the duality $D^k_{X^+}(n,m) \simeq D^\ell_{Y^-}(n,m)$ (Theorem \ref{duality theorem}), we regard the vertex superalgebra $\mathscr{W}^\ell_{Y^-}$ as a module over $D^\ell_{Y^-}(n,m)\otimes V^{\ell^-_{\bg_\tn}}(\bg_\tn)$ and $\mathscr{W}^k_{X^+}$ over $D^\ell_{X^+}(n,m)\otimes V^{k^+_{\bg_\tp}}(\bg_\tp)$, respectively. 

{We show that $\mathscr{W}^\ell_{Y^-}$ and $\mathscr{W}^k_{X^+}$ contain some vertex subalgebras which satisfy the characterizations of $\W_{Y^-}^\ell(n,m)$ and $\W_{X^+}^k(n,m)$, respectively, namely (P1') and (P2) by Theorem \ref{characterization} and the discussion after it.}
For this purpose, we construct the subspace $V_{\mathrm{nat}}$ in (P2) for both cases. By Theorem \ref{main theorem of relcoh}, we may associate with $A_\tnp\in \rho_{\bg_\tnp}$ or $\rho_{\bg_\tnp}^\dagger$ the following (nonzero) cohomology class
$$\Big[ \str_{\rho_{\bg_\tpn}}\otimes A_\tnp \Big],\quad \Big[ \str_{\rho_{\bg_\tpn}^\dagger}\otimes A_\tnp\Big],$$
respectively. Let $\widehat{A}_\tnp$ denote the above cohomology class.
By using Table \ref{tab: abc} and \ref{tab: primary fields}, we find case by case that the parity and the conformal dimension of $\widehat{A}_\tnp$ agree with those of $S_{\sf nat}$ and $S_{\sf nat}^\dagger$ for $\W_{Y^-}^\ell(n,m)$ and $\W_{X^+}^k(n,m)$, respectively.
Then it follows from Corollary \ref{nondegeneracy under relcoh} that the pairing for the cohomology classes $\widehat{A}_\tnp$, ($A_\tnp \in \rho_{\bg_\tnp}$, $\rho_{\bg_\tnp}^\dagger$), is non-degenerate by Theorem \ref{characterization} and Lemma \ref{tentative}. Hence, the subspace $[V_{\mathrm{nat}}]$ spanned by $A_\tnp$'s satisfies the condition (P2). {Now, let us introduce the following vertex subalgebras by specifying the generating subspaces:
\begin{align*}
    &\mathscr{D}^\ell_{Y^-}:=\langle D^\ell_{Y^-}(n,m)\otimes V^{\ell^-_{\bg_\tn}}(\bg_\tn), [V_{\mathrm{nat}}]\rangle \subset \mathscr{W}^\ell_{Y^-},\\
    &\mathscr{D}^k_{X^+}:=\langle D^k_{X^+}(n,m)\otimes V^{\ell^+_{\bg_\tp}}(\bg_\tp), [V_{\mathrm{nat}}]\rangle \subset \mathscr{W}^k_{X^+},
\end{align*}
which decompose into 
\begin{align*}
\mathscr{D}^\ell_{Y^-}
 \simeq \bigoplus_{\lambda\in R}\mathcal{D}^\ell_{Y^-}(\lambda)\otimes \weyl_{\op{\lambda}}^{\ell^-_{\bg_\tn}}(\bg_\tn),\quad 
\mathscr{D}^k_{X^+}
\simeq \bigoplus_{\lambda\in R}\mathcal{D}^k_{X^+}(\lambda)\otimes \weyl_{\op{\lambda}}^{k^+_{\bg_\tp}}(\bg_\tp)
\end{align*}
as modules over $D^\ell_{Y^-}(n,m)\otimes V^{\ell^-_{\bg_\tn}}(\bg_\tn)$ (resp.\ $D^k_{X^+}(n,m)\otimes V^{\ell^+_{\bg_\tp}}(\bg_\tp)$) for some $D^\ell_{Y^-}(n,m)$-submodules $\mathcal{D}_{Y^-}^\ell(\lambda)\subset \mathcal{C}_{X^+}^k(\lambda)$ (resp.\  $D^k_{X^+}(n,m)$-submodules $\mathcal{D}_{X^+}^k(\lambda)\subset \mathcal{C}_{X^+}^k(\lambda)$).}

{We first show the isomorphism $\mathscr{D}^k_{X^+}\simeq \W^k_{X^+}(n,m)$.
Note that it suffices to establish (P1)' in our setting. 
By construction, $\mathscr{D}^k_{X^+}$ is an extension of $D^k_{X^+}(n,m)\otimes V^{\ell^+_{\bg_\tp}}(\bg_\tp)$ by the module
\begin{align*}
\begin{cases}
     \mathcal{D}^k_{X^+}(\op{\rho}_{\bg_\tp})\otimes \weyl_{\rho_{\bg_\tp}}^{k^+_{\bg_\tp}}(\bg_\tp) \oplus \mathcal{D}^k_{X^+}(\op{\rho}_{\bg_\tp}^\dagger)\otimes \weyl_{\rho_{\bg_\tp}^\dagger}^{k^+_{\bg_\tp}}(\bg_\tp) & (X=A),\\
     \mathcal{D}^k_{X^+}(\op{\rho}_{\bg_\tp})\otimes \weyl_{\rho_{\bg_\tp}}^{k^+_{\bg_\tp}}(\bg_\tp) & (X=B,C,D,O).
\end{cases}
\end{align*}
Note that $\mathcal{D}^k_{X^+}(\op{\rho}_{\bg_\tp})$ (and $\mathcal{D}^k_{X^+}(\op{\rho}_{\bg_\tp}^\dagger)$ for $X=A$) satisfies (P1') for $\W^\ell_{Y^-}(n,m)$, that is, the PBW basis property with respect to $D^\ell_{Y^-}(n,m)$ up to the conformal weight $e_{\mathrm{rank}\ag}$. 
We identify the strong generators $W^i_{X^+}$ for $D^k_{X^+}(n,m)$ with $W^i_{Y^-}$ for $D^\ell_{Y^-}(n,m)$ through the duality $D^k_{X^+}(n,m) \simeq D^\ell_{Y^-}(n,m)$ by Theorem \ref{duality theorem}.
On the other hand, the number $e_{\mathrm{rank}\ \ag}$ of $\W^\ell_{Y^-}(n,m)$ is bigger or equal to that of $\W^k_{X^+}(n,m)$ by Table \ref{tab: hopok-type Walg}.
Then, it follows that $\mathcal{D}^k_{X^+}(\op{\rho}_{\bg_\tp})$ (and $\mathcal{D}^k_{X^+}(\op{\rho}_{\bg_\tp}^\dagger)$ for $X=A$) satisfies the PBW basis property (P1') with respect to $D^k_{X^+}(n,m)$, i.e., (P1') for $\W^k_{X^+}(n,m)$. Therefore, we have a surjection $\W^k_{X^+}(n,m)\twoheadrightarrow \mathscr{D}^k_{X^+}$, which is an isomorphism by Proposition \ref{properties for generic case} (1).} Hence, we obtain an embedding 
\begin{align}\label{hom from non to relcoh}
\W^k_{X^+}(n,m)\hookrightarrow \relsemicoh{0}\left(\bg_\tn, \W^\ell_{Y^-}(n,m)\otimes \KVA{-1}[\bg_\tn,\alpha_\tn]\right).
\end{align}
and thus embeddings $\mathcal{C}^k_{X^+}(\lambda)\hookrightarrow \mathcal{C}^\ell_{Y^-}(\op{\lambda})$ of $D^k_{X^+}(n,m)$-modules for all $\lambda\in R$. In particular, we have 
$C^k_{X^+}(n,m)\hookrightarrow C^\ell_{Y^-}(n,m)$. In the case $X=A,C$, this is indeed an isomorphism by Theorem \ref{duality theorem}. In the case $X=B,C,O$, these coset algebras are second-order extensions of the same algebra $D^k_{X^+}(n,m)\simeq D^\ell_{Y^-}(n,m)$ by Theorem \ref{duality theorem} and therefore $C^k_{X^+}(n,m)\simeq C^\ell_{Y^-}(n,m)$ as well.
Now, if $k\in \C\backslash \Q$ is generic, then it follows from Proposition \ref{properties for generic case} (3) that $\mathcal{C}^k_{X^+}(\lambda)\hookrightarrow \mathcal{C}^\ell_{Y^-}(\op{\lambda})$ is indeed an isomorphism of $C^k_{X^+}(n,m)$-modules.
Hence, the $q$-characters of $\mathcal{C}^k_{X^+}(\lambda)$ and $\mathcal{C}^\ell_{Y^-}(\op{\lambda})$ coincide.
Since the $q$-characters are all equal for $k,\ell\in \C\backslash \Q$, $\mathcal{C}^k_{X^+}(\lambda)\simeq \mathcal{C}^\ell_{Y^-}(\op{\lambda})$ holds for all $k\in \C\backslash \Q$. 
Therefore, \eqref{hom from non to relcoh} is an isomorphism of vertex algebras for all $k\in \C\backslash \Q$. This completes the proof of \eqref{relmp}. 
In particular, it follows that $\mathcal{D}^k_{X^+}(\rho_{\bg_\tp})$ (and $\mathcal{D}^k_{X^+}(\rho^\dagger_{\bg_\tp})$ for $X=A$) satisfies (P1') for $\W_{Y^-}^\ell(n,m)$.
As we have already shown the property (P2), we may apply Theorem \ref{characterization} and obtain a surjection  
\begin{align*}
\W^\ell_{Y^-}(n,m)\twoheadrightarrow \mathscr{D}^\ell_{Y^-}\subset\relsemicoh{0}\left(\bg_\tp, \W^k_{X^+}(n,m)\otimes \KVA{1}[\bg_\tp,\alpha_\tp]\right).
\end{align*}
It follows from \eqref{relmp} and Theorem \ref{main theorem of relcoh} that it is an isomorphism as $D^\ell_{Y^-}(n,m)\otimes V^{\ell_{\bg_\tn}^-}(\bg_\tn)$-modules and thus as vertex superalgebras. This completes the proof of \eqref{relpm}.

\appendix \label{appendix}
\section{Proof of Theorem \ref{main theorem of relcoh}}\label{proof of formality of relcoh}
In this appendix, we prove Theorem \ref{main theorem of relcoh} when $\g$ is simple. We set $\kappa_1=k \kappa_0$, $\kappa_2=\ell \kappa_0$ by using the normalized invariant bilinear form $\kappa_0$ and thus $k,\ell\in \C\backslash \Q$.  We will use the decompositions of the loop algebra $\mathrm{L}\g=\g[t^{\pm1}]=\lpp{+}\g\oplus\g[t^{-1}]=\g[t]\oplus \lpp{-}\g$ with $\lpp{\pm}\g=\g[t^{\pm1}]t^{\pm1}$.
\subsection{Spectral sequences}\label{sec: spectral sequences}
In order to prove Theorem \ref{main theorem of relcoh}, we consider spectral sequences in the general setting. 
Given an object $M=\bigoplus_{\Delta\in \Z}M_{\Delta}$ in $\KL_{-2h^\vee}$, let us write
$\mathscr{C}^\bullet(M)=C^{\frac{\infty}{2}+\bullet}(\g,M)$ (resp.\ $\mathscr{C}^\bullet_{\mathrm{rel}}(M)=C^{\frac{\infty}{2}+\bullet}_{\mathrm{rel}}(\g,M)$) and introduce on it a filtration
$F_\bullet \mathscr{C}^\bullet(M)$ in the following way:
we define a grading $\mathscr{C}^\bullet(M)=\bigoplus_{\Delta_F\in\Z}\mathscr{C}^\bullet_{\Delta_F}(M)$ by setting
\begin{align}\label{F-grading}
\Delta_F (X_{n})=n,\quad \Delta_F (\varphi_{i,n})=|n|,\quad \Delta_F (\varphi^*_{i,n})=-|n|,\quad \Delta_F|_{M_\Delta}=-\Delta
\end{align}
and then 
$$F_p \mathscr{C}^\bullet(M)=\bigoplus_{\Delta_F\leq -p}\mathscr{C}^\bullet_{\Delta_F}(M),\quad (p\in \Z).$$
The filtration $F_\bullet \mathscr{C}^\bullet(M)$ is decreasing, exhaustive and separated, i.e. satisfies 
$F_p \mathscr{C}^\bullet(M)\supset F_{p+1}\mathscr{C}^\bullet(M)$, 
$\cup_p F_p \mathscr{C}^\bullet(M)=\mathscr{C}^\bullet(M)$ and $\cap_pF_p \mathscr{C}^\bullet(M)=0$, respectively.
Since the differential $d$ preserves the filtration $d\colon F_p \mathscr{C}^\bullet(M)\rightarrow F_p \mathscr{C}^\bullet(M)$, we have the associated spectral sequence 
\begin{align}
E_1^F=H^\bullet(\gr_F\mathscr{C}^\bullet(M),\gr_Fd)\Rightarrow E_\infty^F=\gr_F \semicoh{\bullet}(\g,M).
\end{align}
Note that the spectral sequence converges by Remark \ref{decomposition of complexes}.
(Indeed, the filtrations are finite when restricted to the subcomplexes in Remark \ref{decomposition of complexes} thanks to $\Delta_F (\varphi^*_{i})=0$.)
The induced differential decomposes into $\gr_Fd=d_1^F+d_2^F$ with 
\begin{align}\label{F-graded d}
\begin{split}
&d_1^F=\sum_{n\geq0}(-1)^{\bar{x}_i}x_{i(n)}\varphi_{i,-n}^*-\frac{1}{2}\sum_{p,q\geq0}(-1)^{\bar{x}_i\bar{x}_k}c_{i,j}^k\varphi_{i,-p}^*\varphi_{j,-q}^*\frac{\partial}{\partial \varphi_{k,-p-q}^*},\\
&d_2^F=-\frac{1}{2}\sum_{p,q>0}(-1)^{\bar{x}_i\bar{x}_k+\bar{x}_i+\bar{x}_j}c_{i,j}^k\varphi_{k,-p-q}\frac{\partial}{\partial \varphi_{i,-p}}\frac{\partial}{\partial \varphi_{j,-q}}.
\end{split}
\end{align}
Then it follows that the complex $(\gr_F\mathscr{C}^\bullet(M),\gr_Fd)$ decomposes into the tensor product
$$(\gr_F\mathscr{C}^\bullet(M),\gr_Fd)\simeq \left(M\otimes \bigwedge\hspace{-0.5mm}_{\hspace{1mm}\partial^n\varphi_i^*},d_1^F\right)\otimes \left(\C\otimes \bigwedge\hspace{-0.5mm}_{\hspace{1mm}\partial^n\varphi_i},d_2^F\right).$$
and that the first component in the right-hand side is nothing but the Chevalley--Eilenberg cochain complex of $\g[t]$ with coefficients in $M$ and the second component the chain complex of $\lpp{-}\g$ with coefficients in $\C$. Therefore, $E_1^F\simeq H^\bullet(\g[t],M)\otimes H_\bullet(\lpp{-}\g,\C)$.

By restricting $F_\bullet \mathscr{C}^\bullet(M)$ to $\mathscr{C}^\bullet_{\mathrm{rel}}(M)$, namely, $F_p\mathscr{C}^\bullet_{\mathrm{rel}}(M)=\mathscr{C}^\bullet_{\mathrm{rel}}(M) \cap F_p\mathscr{C}^\bullet(M)$, we obtain a similar spectral sequence 
$$E_{1,\mathrm{rel}}^F=H^\bullet(\gr_F\mathscr{C}^\bullet_{\mathrm{rel}}(M),\gr_Fd)\Rightarrow E_{\infty,\mathrm{rel}}^F=\gr_F \relsemicoh{\bullet}(\g,M)$$
with $\gr_Fd=d_{1,\mathrm{rel}}^{F}+d_2^F$ where
$$d_{1,\mathrm{rel}}^F=\sum_{n>0}(-1)^{\bar{x}_i}x_{i,n}\varphi_{i,-n}^*-\frac{1}{2}\sum_{p,q>0}(-1)^{\bar{x}_i\bar{x}_k}c_{i,j}^k\varphi_{i,-p}^*\varphi_{j,-q}^*\frac{\partial}{\partial \varphi_{k,-p-q}^*}.$$
In this case, we have 
\begin{align*}
E_{1,\mathrm{rel}}^F
&=H^\bullet\left(\left(\left(M\otimes \bigwedge\hspace{-0.5mm}_{\hspace{1mm}\partial^{n+1}\varphi_i^*},d_{1,\mathrm{rel}}^F\right)\otimes \left(\C\otimes \bigwedge\hspace{-0.5mm}_{\hspace{1mm}\partial^n\varphi_i},d_2^F\right)\right)^\g\right)\\
&\simeq H^\bullet\left(\left(M\otimes \bigwedge\hspace{-0.5mm}_{\hspace{1mm}\partial^{n+1}\varphi_i^*},d_{1,\mathrm{rel}}^F\right)\otimes \left(\C\otimes \bigwedge\hspace{-0.5mm}_{\hspace{1mm}\partial^n\varphi_i},d_2^F\right)\right)^\g\\
&=\left(H^\bullet(\lpp{+}\g,M)\otimes H_\bullet(\lpp{-}\g,\C)\right)^\g.
\end{align*}
Here in the second line, we have used the commutativity of taking cohomology and $\g$-invariant subspace, which follows from the complete reducibility of integrable $\g$-modules.

Next, we introduce another filtration $G_\bullet \mathscr{C}^\bullet(M)$ on $\mathscr{C}^\bullet(M)$ in the following way: we define a grading $\Delta_G$ in the following way: we replace \eqref{F-grading} with 
$$\Delta_G (X_{n})=n,\quad \Delta_G (\varphi_{i,n})=-|n|,\quad \Delta_G (\varphi^*_{i,n})=|n|,\quad \Delta_G|_{M_\Delta}=-\Delta$$
and set $G_\bullet \mathscr{C}^\bullet(M)$ by 
$$G_p \mathscr{C}^\bullet(M)=\bigoplus_{\Delta_G\geq p}\mathscr{C}^\bullet_{\Delta_G}(M),\quad (p\in \Z).$$
Again, the filtration $G_\bullet \mathscr{C}^\bullet(M)$ is decreasing, exhaustive, and separated. Since $d$ preserves the filtration, we have the associated spectral sequence 
$$E_1^G=H^\bullet(\gr_G\mathscr{C}^\bullet(M),\gr_Gd)\Rightarrow E_\infty^G=\gr_G \semicoh{\bullet}(\g,M)$$
with $\gr_Gd=d_1^G+d_2^G$ where 
\begin{align}\label{G-graded d}
\begin{split}
&d_1^G=\sum_{n\geq0}(-1)^{\bar{x}_i}x_{i,-n}\varphi_{i,n}^*-\frac{1}{2}\sum_{p,q\geq0}(-1)^{\bar{x}_i\bar{x}_k}c_{i,j}^k\NO{\varphi_{i,p}^*\varphi_{j,q}^* \varphi_{k,-p-q}},\\
&d_2^G=-\frac{1}{2}\sum_{p,q>0}(-1)^{\bar{x}_i\bar{x}_k}c_{i,j}^k\varphi_{i,-p}^*\varphi_{j,-q}^* \frac{\partial}{\partial \varphi^*_{k,-p-q}}.
\end{split}
\end{align}
In this case, the complex $(\gr_G\mathscr{C}^\bullet(M),\gr_Gd)$ decomposes into
$$(\gr_G\mathscr{C}^\bullet(M),\gr_Gd)\simeq \left(M\otimes \bigwedge\hspace{-0.5mm}_{\hspace{1mm}\varphi_i^*,\partial^n\varphi_i},d_1^G\right)\otimes \left(\C\otimes \bigwedge\hspace{-0.5mm}_{\hspace{1mm}\partial^{n+1}\varphi_i^*},d_2^G\right)$$
and the second component in the right-hand side is the cochain complex calculating $H^\bullet(\lpp{+}\g,\C)$.
Although the first component lacks a clear meaning, by restricting to
$$\mathrm{gr}_G\mathscr{C}^\bullet_{\mathrm{rel}}(M)=\left(\left(M\otimes \bigwedge\hspace{-0.5mm}_{\hspace{1mm}\partial^{n+1}\varphi_i^*}\right)\otimes \left(\C\otimes \bigwedge\hspace{-0.5mm}_{\hspace{1mm}\partial^n\varphi_i}\right)\right)^\g$$
we find that the induced differential $d_1^G$ acts on the first component by
\begin{align*}
d_{1,\mathrm{rel}}^G
=\sum_{n>0}x_{i,-n}\frac{\partial}{\partial \varphi_{i,-n}}-\frac{1}{2}\sum_{p,q>0}(-1)^{\bar{x}_i\bar{x}_k+\bar{x}_i+\bar{x}_j}c_{i,j}^k  \varphi_{k,-p-q} \frac{\partial}{\partial \varphi_{i,-p}}\frac{\partial}{\partial \varphi_{j,-q}}.
\end{align*}
Thus, the first component in the right-hand side is the chain complex calculating $H_\bullet(\lp^-\g, M)$. Therefore, we have obtained a convergent spectral sequence with
\begin{align*}
E_{1,\mathrm{rel}}^G\simeq \left(H_\bullet(\lpp{-}\g,M)\otimes H^\bullet(\lpp{+}\g,\C)\right)^\g.
\end{align*}
To summarize, we have obtained the following proposition.
\begin{proposition}\label{spectral seq for relcoh}
For an object $M$ in $\KL_{-2h^\vee}$, we have convergent spectral sequences 
\begin{align*}
&E_{1,\mathrm{rel}}^F\simeq \left(H^\bullet(\lpp{+}\g,M)\otimes H_\bullet(\lpp{-}\g,\C)\right)^\g\Rightarrow E_\infty^F=\gr_F \relsemicoh{\bullet}(\g,M),\\
&E_{1,\mathrm{rel}}^G\simeq \left(H_\bullet(\lpp{-}\g,M)\otimes H^\bullet(\lpp{+}\g,\C)\right)^\g\Rightarrow E_\infty^G=\gr_G \relsemicoh{\bullet}(\g,M).
\end{align*}
In particular, if $H^n(\lpp{+}\g,M)=0$, $H_n(\lpp{-}\g,M)=0$, ($n>0$), then $\relsemicoh{n}(\g,M)=0$ holds for $n\neq0$.
\end{proposition}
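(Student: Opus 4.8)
The plan is to take the two spectral sequences constructed above as the main tool; what the proposition adds on top of that construction is just convergence together with the degree count behind the ``in particular'' clause, and I expect neither to pose a real obstacle. First I would record convergence: the filtrations $F_\bullet$ and $G_\bullet$ on $\mathscr{C}^\bullet_{\mathrm{rel}}(M)$ are decreasing, exhaustive, separated, and preserved by $d$, so each yields a spectral sequence abutting to $\gr_F\relsemicoh{\bullet}(\g,M)$, respectively $\gr_G\relsemicoh{\bullet}(\g,M)$; by Remark~\ref{decomposition of complexes} the complex $\mathscr{C}^\bullet_{\mathrm{rel}}(M)$ is a direct sum of finite-dimensional subcomplexes, and on each summand both filtrations are finite, so each spectral sequence degenerates at a finite page and converges. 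The $E_1$-page identifications are precisely the tensor factorization already performed: passing to $\gr$ splits the differential as $d_1+d_2$, the associated graded complex factors as the tensor product of a Chevalley--Eilenberg cochain complex (computing $H^\bullet(\lpp{+}\g,M)$ in the $F$-case and $H^\bullet(\lpp{+}\g,\C)$ in the $G$-case) with a Lie-algebra chain complex (computing $H_\bullet(\lpp{-}\g,\C)$, resp.\ $H_\bullet(\lpp{-}\g,M)$), and taking $\g$-invariants commutes with taking cohomology because integrable $\g$-modules are completely reducible. This is exactly the content of the two displayed formulas for $E_{1,\mathrm{rel}}^F$ and $E_{1,\mathrm{rel}}^G$.

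Next I would prove the vanishing assertion by a degree count in the cohomological grading of $\semi(\g)$. The key observation is that this grading is inherited by every page $E_r$, on which the differential $d_r$ raises it by one (just as $d$ and each graded piece $d_1^F, d_2^F, d_1^G, d_2^G$ does), so the abutment is graded by cohomological degree and it is enough to bound this degree already at $E_1$. In $E_{1,\mathrm{rel}}^F\simeq\bigl(H^\bullet(\lpp{+}\g,M)\otimes H_\bullet(\lpp{-}\g,\C)\bigr)^\g$ the first tensor factor contributes nonnegative cohomological degree and the second contributes nonpositive degree (a class in $H_q$ sits in cohomological degree $-q$); under the hypothesis $H^n(\lpp{+}\g,M)=0$ for $n>0$ the first factor is concentrated in degree $0$, so $E_{1,\mathrm{rel}}^F$, and therefore $\relsemicoh{\bullet}(\g,M)$, lives in cohomological degrees $\le 0$. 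Dually, from $E_{1,\mathrm{rel}}^G\simeq\bigl(H_\bullet(\lpp{-}\g,M)\otimes H^\bullet(\lpp{+}\g,\C)\bigr)^\g$ the hypothesis $H_n(\lpp{-}\g,M)=0$ for $n>0$ concentrates the first factor in degree $0$, forcing $\relsemicoh{\bullet}(\g,M)$ into cohomological degrees $\ge 0$. Intersecting the two one-sided bounds gives $\relsemicoh{n}(\g,M)=0$ for $n\ne 0$, which is the assertion.

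The places where care is needed, rather than any genuine difficulty, are: the commutation of $\g$-invariants with cohomology, which rests on semisimplicity of the relevant category of $\g$-modules and goes through uniformly, including $\g=\osp_{1|2n}$, once the signs in $d_1$ and $d_2$ are respected; the exhaustiveness and separatedness of both filtrations, which is what legitimizes identifying the abutment with the associated graded of $\relsemicoh{\bullet}(\g,M)$; and --- the actual content of the ``in particular'' --- that $F$ and $G$ weight $\varphi_{i,n}$ and $\varphi^*_{i,n}$ with opposite signs, so they give two genuinely independent one-sided bounds which only together pin the cohomology down to degree $0$. I do not anticipate a serious obstacle beyond keeping this bookkeeping consistent.
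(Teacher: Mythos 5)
Your proposal is correct and follows essentially the same route as the paper: the paper's ``proof'' is precisely the preceding construction of the filtrations $F_\bullet$ and $G_\bullet$, the splitting $\gr d = d_1 + d_2$, the tensor factorization of the associated graded complexes, and the commutation of $\g$-invariants with cohomology via complete reducibility, with convergence supplied by Remark~\ref{decomposition of complexes}. Your explicit degree count for the ``in particular'' clause --- the $F$-sequence bounding the cohomological degree above by $0$ and the $G$-sequence bounding it below --- is exactly the argument the paper leaves implicit.
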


\subsection{Pairings between cohomology and homology}\label{pairing}
We make a digression for pairings between Lie superalgebra cohomology and homology
\begin{align}\label{pairing at cohomology}
H^n(\g_1,M_1)\otimes H_n(\g_2,M_2)\rightarrow \C
\end{align}
for (different) Lie superalgebras $\g_i$ and their modules $M_i$ ($i=1,2$) under suitable conditions, which we will use Proposition \ref{cohomology vanishing} below.
Let $\g$ be an arbitrary Lie superalgebra and $M$ a $\g$-module.
The cochain complex of $\g$ with coefficients in $M$ is defined as
$$C^\bullet(\g,M)=\bigoplus_{n\in \Z_{\geq0}}C^n(\g,M),\quad C^n(\g,M)=\Hom_\C(\mathrm{Sym}^n(\Pi\g),M),$$
equipped with $d_n\colon C^n(\g,M)\rightarrow C^{n+1}(\g,M)$ given by 
\begin{align*}
&d_nf(x_1 x_2\cdots x_{n+1})
=\sum_{1\leq i\leq n+1}(-1)^{A_i}x_if(x_1\cdots\hat{x}_i \cdots x_{n+1})\\
&\hspace{4cm}+\sum_{1\leq i<j\leq n+1}(-1)^{A_{i,j}}f([x_i,x_j] x_1\cdots\hat{x}_i \cdots \hat{x}_j \cdots x_{n+1}),
\end{align*}
where
\begin{align*}
    &A_i=\bar{x}_i+(\bar{f}+k_{i-1})(\bar{x}_i+1),\\
    &A_{i,j}=\bar{f}+(k_{i-1}+1)(\bar{x}_i+1)+(k_{j-1}+\bar{x}_i+1)(\bar{x}_j+1),
\end{align*}
with
$k_i=\overline{x}_{1}+\cdots+\overline{x}_{i}+i$.
Similarly, the chain complex of $\g$ with coefficients in $M$ is defined as
$$C_\bullet(\g,M)=\bigoplus_{n\in \Z_{\geq0}}C_n(\g,M),\quad C_n(\g,M)=M\otimes \mathrm{Sym}(\Pi\g),$$
equipped with $\partial_n\colon C_n(\g;M)\rightarrow C_{n-1}(\g;M)$ given by 
\begin{align*}
&\partial_n(\alpha\otimes x_1 x_2\cdots x_n)
=\sum_{1\leq i\leq n}(-1)^{B_i}(x_i\alpha)\otimes x_1\cdots\hat{x}_i \cdots x_n\\
&\hspace{4cm}+\sum_{1\leq i<j\leq n}(-1)^{B_{i,j}}\alpha\otimes [x_i,x_j] x_1\cdots\hat{x}_i \cdots \hat{x}_j \cdots x_n
\end{align*}
where 
\begin{align*}
&B_i=(\bar{\alpha}+k_{i-1})(\bar{x}_i+1),\\ &B_{i,j}=1+\bar{\alpha}+(k_{i-1}+1)(\bar{x}_i+1)+(k_{j-1}+\bar{x}_i+1)(\bar{x}_j+1).
\end{align*}
\begin{remark}\textup{
The identification of the cochain (resp.\ chain) complex given above and the one in the vertex algebraic formulation appearing in \S \ref{sec: spectral sequences} is as follows: 
\begin{align*}
\begin{array}{ccl}
M\otimes \wedge_{\varphi_i}&\xrightarrow{\simeq} &\Hom_\C(\mathrm{Sym}(\Pi\g),M)\\
m\otimes \varphi_{i_1}\cdots \varphi_{i_n}&\mapsto& \left[x_{j_1}\cdots x_{j_n}\mapsto \sum_{\sigma\in S_n}(-1)^\bullet \delta_{(i_1,\cdots i_n),(j_1,\cdots, j_n)}m\right],\\
M\otimes \wedge_{\varphi_i}&\xrightarrow{\simeq}&M\otimes \mathrm{Sym}(\Pi\g)\\
m\otimes \varphi_{i_1}\cdots \varphi_{i_n}&\mapsto& m\otimes x_{i_1}\cdots x_{i_n}.
\end{array}
\end{align*}
Here $S_n$ is the $n$-th symmetric group and the sign $(-1)^\bullet$ is assigned as
$$\varphi_{i_1}\varphi_{i_2}\colon x_{j_1}x_{j_2}\mapsto \delta_{i_2,j_1}\delta_{i_1,j_2}+(-1)^{(\bar{x}_{i_2}+1)(\bar{x}_{j_1}+1)}\delta_{i_1,j_1}\delta_{i_2,j_2}.$$}
\end{remark}
Given even linear maps 
$$^t\colon \g_2\rightarrow \g_1,\quad \Psi\colon M_1\otimes M_2\rightarrow \C,$$
we may extend the pairing $\Psi$ to the even pairing $\Psi_\bullet$ on the whole complex by
\begin{align*}
\begin{array}{cccl}
\Psi_n\colon & C^n(\g_1,M_1)\otimes C_n(\g_2,M_2)& \rightarrow &\C \\
&  f\otimes \left(\alpha \otimes x_1\cdots x_n\right)&\mapsto&
(-1)^{\bar{x}_1+\cdots \bar{x}_n}\Psi\left(f\left(^tx_n\cdots \ ^tx_1\right)\otimes \alpha\right).
\end{array}
\end{align*}
\begin{lemma}
If $\Psi_\bullet$ satisfies 
\begin{align}\label{compatibility with differentials}
\Psi_{n+1}(d_nf\otimes P)=\Psi_n(f\otimes \partial_{n+1} P),
\end{align} then $\Psi$ induces an even pairing 
$[\Psi_\bullet]\colon H^\bullet(\g_1,M_1)\otimes H_\bullet(\g_2,M_2)\rightarrow \C$.
Moreover, if $\Psi_\bullet$ is non-degenerate and $\iota$ is an isomorphism, then so is $[\Psi_\bullet]$.
\end{lemma}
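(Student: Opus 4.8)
The plan is to handle the two assertions in turn: the first (existence of $[\Psi_\bullet]$) is a formal consequence of the compatibility \eqref{compatibility with differentials}, while the second (non-degeneracy) follows once one recognizes the chain complex $C_\bullet(\g_2,M_2)$ as the graded dual of the cochain complex $C^\bullet(\g_1,M_1)$ and invokes exactness of dualization.

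First I would verify that $\Psi_n(f\otimes P)$ descends to (co)homology. Fix $n$, a cocycle $f\in C^n(\g_1,M_1)$ with $d_nf=0$, and a cycle $P\in C_n(\g_2,M_2)$ with $\partial_nP=0$. Replacing $P$ by $P+\partial_{n+1}Q$ changes $\Psi_n(f\otimes P)$ by $\Psi_n(f\otimes \partial_{n+1}Q)=\Psi_{n+1}(d_nf\otimes Q)=0$, using \eqref{compatibility with differentials} and $d_nf=0$; replacing $f$ by $f+d_{n-1}g$ changes it by $\Psi_n(d_{n-1}g\otimes P)=\Psi_{n-1}(g\otimes \partial_nP)=0$. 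Hence $[\Psi_\bullet]([f]\otimes[P]):=\Psi_n(f\otimes P)$ is well-defined on $H^n(\g_1,M_1)\otimes H_n(\g_2,M_2)$, and it is even because every $\Psi_n$ is. This step is routine diagram chasing.

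For the second assertion I would argue as follows. When ${}^t\colon \g_2\to\g_1$ (the map called $\iota$ in the statement) is an isomorphism and $\Psi_\bullet$ is non-degenerate, the maps $\Psi_n$ identify $C_n(\g_2,M_2)$ with the graded dual of $C^n(\g_1,M_1)$, and \eqref{compatibility with differentials} states precisely that under these identifications the differential $\partial_{n+1}$ goes over to the transpose $d_n^{\,\vee}$ of $d_n$. Thus $(C_\bullet(\g_2,M_2),\partial_\bullet)$ is isomorphic, as a complex, to the dual complex $(C^\bullet(\g_1,M_1)^\vee,d^{\,\vee}_\bullet)$. Since taking the graded linear dual is an exact contravariant functor on graded vector spaces with finite-dimensional graded pieces, it commutes with passing to cohomology, so $H_n(\g_2,M_2)\cong H^n(\g_1,M_1)^\vee$; a direct chase through the identifications shows this isomorphism is exactly the one induced by $[\Psi_n]$. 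Therefore $[\Psi_\bullet]$ is a perfect pairing.

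The point needing care --- the main obstacle --- is the meaning of ``graded dual'' and ``non-degenerate'' in the infinite-dimensional setting: the complexes $C^\bullet(\g_i,M_i)$ (with $\g_i$ the relevant loop algebras in the application) are infinite-dimensional, so one must fix the energy/conformal grading from \S\ref{sec: spectral sequences}, check that $\Psi_\bullet$ pairs complementary graded components, and observe that each such component is finite-dimensional (this is the content of Remark \ref{decomposition of complexes} together with the finiteness statements in \S\ref{Relative semi-infinite cohomology}). With respect to that grading ``non-degenerate'' means a perfect pairing of finite-dimensional spaces in each component, and it is on those pieces that exactness of dualization is applied; once the bookkeeping is in place, the remaining verifications are purely formal.
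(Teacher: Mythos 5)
Your proof is correct, and in substance it is the paper's argument: both reduce the statement to linear algebra about a complex paired against its dual. The packaging differs slightly. The paper proves the lemma by quoting an abstract statement — for an adjoint pair of maps $f\colon S_1\to S_2$, $g\colon T_2\to T_1$ with respect to non-degenerate even pairings, the induced pairings $\Ker f\otimes \Coker g\to\C$ and $\Coker f\otimes\Ker g\to\C$ are again non-degenerate — applied degree by degree (first to $d_n,\partial_{n+1}$, then to $d_{n-1},\partial_n$ on the resulting spaces). You instead identify $(C_\bullet(\g_2,M_2),\partial)$ with the graded dual of $(C^\bullet(\g_1,M_1),d)$ and invoke exactness of dualization; this is the same computation in universal-coefficients clothing. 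One point where your write-up is actually more careful than the paper's: the auxiliary linear-algebra lemma is stated there for arbitrary vector superspaces with proof ``straightforward,'' but it is false without a finiteness hypothesis — on $S_1=T_1=\bigoplus_{n\geq 1}\C a_n$ and $S_2=T_2=\bigoplus_{n\geq 1}\C b_n$ with the standard pairings, the adjoint pair $f(a_n)=b_n-b_{n+1}$, $g(b_m)=a_m-a_{m-1}$ has $\Coker f\cong\C$ but $\Ker g=0$, so the induced pairing $\Coker f\otimes\Ker g\to\C$ is degenerate. Your explicit reduction to finite-dimensional graded components (available here because the complexes split into finite-dimensional subcomplexes by conformal weight, as in Remark \ref{decomposition of complexes}) is exactly the hypothesis under which both arguments go through, so your proof stands as written.
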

This is immediate from the following statement in linear algebra:
\begin{lemma}
Let $S_i$ and $T_i$ $(i=1,2)$ be vector superspaces (over $\C$) equipped with odd homomorphisms
$f\colon S_1\rightarrow S_2,\quad g\colon T_2\rightarrow T_1$
and even pairings
$(\cdot,\cdot)_i\colon S_i\otimes T_i\rightarrow \C$ $(i=1,2)$ such that 
$(f\alpha,\beta)_2=(\alpha,g\beta)_1$ for $\alpha\in S_1$, $\beta\in T_2$.
Then the pairings induce even pairings 
$$\langle\cdot,\cdot\rangle_1\colon \Ker f\otimes \Coker g\rightarrow \C,\quad \langle\cdot,\cdot\rangle_2\colon \Coker f\otimes \Ker g\rightarrow \C.$$
If $(\cdot,\cdot)_i$ are non-degenerate, then so are $\langle\cdot,\cdot\rangle_i$.
\end{lemma}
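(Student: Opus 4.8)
The plan is to treat this as a purely linear-algebraic statement: first construct the two induced pairings and check that they are well defined, then deduce their non-degeneracy from that of $(\cdot\,,\cdot)_1$, $(\cdot\,,\cdot)_2$ together with the adjointness $(f\alpha,\beta)_2=(\alpha,g\beta)_1$. The conceptual shortcut I would keep in mind is that, in the situation where this lemma is applied, all spaces carry a compatible grading with finite-dimensional homogeneous components and all maps are homogeneous; a non-degenerate $(\cdot\,,\cdot)_i$ then identifies $T_i$ with the (graded) dual $S_i^{*}$ and $g$ with the transpose of $f$, so the statement becomes the standard dualities $\Coker(f^{*})\simeq(\Ker f)^{*}$ and $\Ker(f^{*})\simeq(\Coker f)^{*}$ together with the fact that the induced pairings are the canonical evaluation pairings. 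It is just as easy to argue directly, as follows.

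First I would define the pairings. For $\alpha\in\Ker f$ and $\overline\beta=\beta+\im g\in\Coker g$ put $\langle\alpha,\overline\beta\rangle_1:=(\alpha,\beta)_1$; this is independent of the chosen lift $\beta$ because $(\alpha,g\gamma)_1=(f\alpha,\gamma)_2=0$ for all $\gamma\in T_2$. Symmetrically, for $\overline s=s+\im f\in\Coker f$ and $t\in\Ker g$ put $\langle\overline s,t\rangle_2:=(s,t)_2$, which is independent of the lift $s$ since $(f\alpha,t)_2=(\alpha,gt)_1=0$. Evenness of both pairings is clear from the evenness of the given data, and $\Ker f,\im f,\Ker g,\im g$ are $\Z_2$-graded, so everything is compatible with parity.

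Next I would establish non-degeneracy. Writing $U^{\perp}$ for the orthogonal complement of a subspace $U$ under the relevant $(\cdot\,,\cdot)_i$, the adjointness gives $(\im g)^{\perp}=\Ker f$ in $S_1$ (using non-degeneracy of $(\cdot\,,\cdot)_2$) and $(\im f)^{\perp}=\Ker g$ in $T_2$ (using non-degeneracy of $(\cdot\,,\cdot)_1$). The left kernel of $\langle\cdot\,,\cdot\rangle_1$ is then $\Ker f$ intersected with the left kernel of $(\cdot\,,\cdot)_1$, hence $0$; and, since $\im g\subseteq(\Ker f)^{\perp}$, its right kernel is $(\Ker f)^{\perp}/\im g=\bigl((\im g)^{\perp}\bigr)^{\perp}/\im g$, which vanishes once one knows the double-orthogonal identity $\bigl((\im g)^{\perp}\bigr)^{\perp}=\im g$. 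The non-degeneracy of $\langle\cdot\,,\cdot\rangle_2$ is obtained in the same way (one kernel is immediate from non-degeneracy of $(\cdot\,,\cdot)_2$, the other uses $\bigl((\im f)^{\perp}\bigr)^{\perp}=\im f$), or simply by noting that the hypotheses are symmetric under $(S_i,f)\leftrightarrow(T_i,g)$ with the pairings transposed and applying the first case to the swapped system.

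The only genuine point, and the place I expect to need care, is the double-orthogonal identity $\bigl((\im g)^{\perp}\bigr)^{\perp}=\im g$: it is the surjectivity half of non-degeneracy for the induced pairing, and it can fail for infinite-dimensional spaces equipped with merely non-degenerate pairings. In the appendix it holds because each $(\cdot\,,\cdot)_i$ is homogeneous for a grading (by conformal weight) whose graded pieces are finite-dimensional and all relevant maps are homogeneous, so the identity reduces degree by degree to the elementary finite-dimensional fact. I would therefore record this finiteness as a standing assumption — it is satisfied by all the cochain and chain complexes occurring in Appendix~\ref{proof of formality of relcoh} — and leave the rest of the proof at the level sketched above.
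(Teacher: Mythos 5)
Your construction of the induced pairings and the well-definedness checks are exactly right, and your analysis of non-degeneracy is correct: the left kernel of $\langle\cdot,\cdot\rangle_1$ vanishes immediately from non-degeneracy of $(\cdot,\cdot)_1$, while the right kernel is $(\Ker f)^{\perp}/\im g=\bigl((\im g)^{\perp}\bigr)^{\perp}/\im g$, so everything hinges on the double-orthogonal identity. The paper offers no argument here -- its proof of this lemma is literally the word ``Straightforward'' -- so there is nothing of substance to compare against; if anything, you have done the work the paper skips. More importantly, the subtlety you flag is genuine and not merely a scruple: for arbitrary vector superspaces with merely non-degenerate pairings the lemma as stated is \emph{false}. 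For instance, take $S_i=T_i=\C^{(\N)}$ with the standard pairings, $g(f'_i)=f_i-f_{i+1}$ and $f$ its adjoint; then $f$ and $g$ are injective, $\Ker f=0$, but $\Coker g\neq 0$, so $\langle\cdot,\cdot\rangle_1\colon 0\otimes\Coker g\to\C$ is degenerate -- precisely because $\bigl((\im g)^{\perp}\bigr)^{\perp}=T_1\supsetneq\im g$. Your repair -- observing that in every application the spaces are graded by conformal weight with finite-dimensional homogeneous components, the maps and pairings are homogeneous, and the double-perp identity then reduces degree by degree to the finite-dimensional fact -- is exactly the right fix and is satisfied by all the (co)chain complexes in the appendix (cf.\ Remark \ref{decomposition of complexes}). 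So your proof is correct under a hypothesis that should really be part of the statement; the only criticism is cosmetic, namely that the finiteness assumption ought to be stated up front rather than appended as a remark, since without it the conclusion you are asked to prove does not hold.
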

\proof
Straightforward.
\endproof

\begin{proposition}\label{condition for pairing}
Let $\g_i$ ($i=1,2$) be Lie superalgebras and $M_i$ ($i=1,2$) $\g_i$-modules. Suppose that we have an even linear isomorphism $\iota\colon\g_2\xrightarrow{\simeq} \g_1$ and a non-degenerate even pairing $\Psi\colon M_1\otimes M_2\rightarrow \C$ (as vector superspaces). Then 
the condition \eqref{compatibility with differentials} holds if and only if 
\begin{enumerate}
\item[(Q1)] $^t$ is an anti-isomorphism of Lie superalgebras,
\item[(Q2)] $\Psi$ satisfies $\Psi(m_1\otimes x.m_2)=\Psi( ^t x. m_1\otimes m_2)$ ($x\in \g_2,\ m_i\in M_i$).
\end{enumerate}
\end{proposition}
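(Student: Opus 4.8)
The plan is to deduce both implications from a single term-by-term comparison of the explicit formulas for $d_n$ and $\partial_{n+1}$. Write each of these differentials as the sum of its ``module part'' (the sum over a single index $i$, built from the actions $x_i\cdot$) and its ``bracket part'' (the sum over pairs $i<j$, built from $[x_i,x_j]$). Since $\Psi_\bullet$ evaluates a cochain on the reversed transposed tuple ${}^tx_{n+1}\cdots{}^tx_1$, the module part of $d_nf$ contributes terms of shape $\Psi\big({}^tx_k\cdot m\otimes\alpha\big)$ while the module part of $\partial_{n+1}P$ contributes terms $\Psi\big(m\otimes x_k\cdot\alpha\big)$; and the bracket part of $d_nf$ contributes $\Psi\big(f([{}^tx_k,{}^tx_l]\cdots)\otimes\alpha\big)$ while that of $\partial_{n+1}P$ contributes $\Psi\big(f({}^t[x_k,x_l]\cdots)\otimes\alpha\big)$. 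The module terms match exactly when $\Psi$ intertwines the two actions as in (P2), and the bracket terms match exactly when ${}^t={}\iota$ is a graded anti-homomorphism, i.e.\ when (P1) holds (recall that $\iota$ is already assumed even and invertible). Thus \eqref{compatibility with differentials} splits into a ``module identity'' equivalent to (P2) and a ``bracket identity'' equivalent to (P1), which is the content of the proposition.

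For the ``if'' direction I would fix $f\in C^n(\g_1,M_1)$ and $P=\alpha\otimes x_1\cdots x_{n+1}$, expand both sides, and on the module-part terms apply (P2) to transport the $\g$-action from $M_1$ to $M_2$, on the bracket-part terms apply (P1) to rewrite $[{}^tx_k,{}^tx_l]$ as ${}^t[x_l,x_k]$ up to the anti-homomorphism sign. In each case what remains is a purely combinatorial identity between $(-1)^{A_i}$ (resp.\ $(-1)^{A_{i,j}}$), evaluated on the reversed transposed tuple and corrected by the prefactor $(-1)^{\sum_j\bar x_j}$ appearing in $\Psi_n$, and $(-1)^{B_i}$ (resp.\ $(-1)^{B_{i,j}}$). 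These sign identities hold on every component on which the pairing is nonzero: using that $\Psi$ and $\iota$ are even, the relevant parity equals $\bar x_k$ (resp.\ $\bar x_i+\bar x_j$) on such a component, and then identities like $\bar x(\bar x+1)\equiv0$ make the two signs agree; one also needs to track how the partial-parity indices $k_i$ transform under reversal. I expect this sign bookkeeping, rather than any conceptual difficulty, to be the main obstacle; in the write-up I would isolate it as a single lemma and invoke it for both the module and the bracket matching.

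For the ``only if'' direction I would read off (P2) from the $n=0$ instance of \eqref{compatibility with differentials}: with $P=\alpha\otimes x\in C_1(\g_2,M_2)$ and $f=m_1\in C^0(\g_1,M_1)=M_1$ one has $\partial_1P=(-1)^{\bar\alpha(\bar x+1)}\,x\cdot\alpha$ and $d_0m_1=\big(y\mapsto(-1)^{\bar y+\bar m_1(\bar y+1)}\,y\cdot m_1\big)$, and evaluating $\Psi_1$ and $\Psi_0$ the identity becomes $(-1)^{\bar m_1(\bar x+1)}\Psi({}^tx\cdot m_1\otimes\alpha)=(-1)^{\bar\alpha(\bar x+1)}\Psi(m_1\otimes x\cdot\alpha)$; on the components where the pairing is nonzero one has $\bar m_1+\bar\alpha=\bar x$, so the two signs coincide and this is precisely (P2). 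Feeding (P2) back into the $n=1$ instance with $P=\alpha\otimes x_1x_2$, the module-part terms cancel on both sides, leaving $\Psi\big(f({}^t[x_1,x_2])\otimes\alpha\big)=\Psi\big(f([{}^tx_2,{}^tx_1])\otimes\alpha\big)$ for all $f\in C^1(\g_1,M_1)$ and all $\alpha\in M_2$ (the comparison of $A_{1,2}$ and $B_{1,2}$ again contributes a trivial sign on the nonvanishing components); since every element of $M_1$ is $f(v)$ for a suitable $f$ and $v={}^t[x_1,x_2]$ or $v=[{}^tx_2,{}^tx_1]$, nondegeneracy of $\Psi$ forces ${}^t[x_1,x_2]=[{}^tx_2,{}^tx_1]=-(-1)^{\bar x_1\bar x_2}[{}^tx_1,{}^tx_2]$. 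Together with $\iota$ being an even linear isomorphism this says ${}^t$ is a Lie superalgebra anti-isomorphism, i.e.\ (P1), completing the proof.
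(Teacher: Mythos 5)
Your proposal is correct and follows essentially the same route as the paper: read off (P2) and (P1) from the $n=0$ and $n=1$ instances of \eqref{compatibility with differentials}, then establish sufficiency by expanding $\Psi_{n+1}(d_nf\otimes P)$ term by term, applying (P2) to the single-index (module-action) terms and (P1) to the bracket terms. The only difference is one of emphasis: the paper dismisses the $n=0,1$ cases as straightforward but writes out the general-$n$ sign computation as an explicit chain of equalities, whereas you do $n=0,1$ in detail and defer the general sign bookkeeping to a lemma --- your outline of that lemma (reversal of the transposed tuple, the prefactor $(-1)^{\sum_j \bar x_j}$, and parity constraints on components where $\Psi$ is nonzero) matches exactly what the paper verifies.
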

\proof
It is straightforward to see that \eqref{compatibility with differentials} for $n=0$ is (Q2) and then that \eqref{compatibility with differentials} for $n=1$ is (Q1). We prove that (Q1) and (Q2) are sufficient. Suppose $n\geq2$ and take
$f\in \Hom_\C(\mathrm{Sym}^n(\Pi \g_1),M_1)$ and $P=\alpha\otimes x_1 x_2\cdots x_{n+1} \in M_2\otimes \mathrm{Sym}^{n+1}(\Pi \g_2)$. 
Firstly, we have 
$$\Psi_{n+1}(d_nf,P)=(-1)^\bigstar\Psi(d_nf(\ ^tx_1\cdots \tran x_{n+1}),\alpha)$$
with $\bigstar=\sum_p\bar{x}_p+\sum_p(\bar{x}_{p}+1)k_{p-1}$. Then, the right-hand side is equal to
\begin{align*}
&\sum_{i=1}^{n+1}(-1)^{\bigstar+A_i}\Psi\left( \tran x_if( \tran x_1\cdots \tran\widehat{x}_i\cdots \tran x_{n+1}),\alpha\right)\\
&\hspace{5mm}+\sum_{1\leq i<j\leq n+1}(-1)^{\bigstar+A_{i,j}}\Psi\left(f([\tran x_i,\tran x_j] \tran x_1 \cdots \tran\widehat{x}_i\cdots \tran\widehat{x}_j\cdots \tran x_{n+1}),\alpha\right)\\
&=\sum_{i=1}^{n+1}(-1)^{\bigstar+A_i}\Psi(f(\tran x_1\cdots \tran\widehat{x}_i\cdots \tran x_{n+1}),x_i\alpha)\\
&\hspace{5mm}+\sum_{1\leq i<j\leq n+1}(-1)^{\bigstar+A_{i,j}+\bar{x}_i\bar{x}_j+1}\Psi\left(f(\tran [x_i,x_j] \tran x_1 \cdots \tran\widehat{x}_i\cdots \tran\widehat{x}_j\cdots \tran x_{n+1}),\alpha\right)\\
&=\sum_{i=1}^{n+1}(-1)^{A_i+\bar{x}_i+\sum_p(\bar{x}_{p}+1)k_{p-1}}\Psi(f,x_i\alpha\otimes x_{n+1}\cdots \widehat{x}_i\cdots x_1)\\
&\hspace{5mm}+\sum_{1\leq i<j\leq n+1}(-1)^{\bigstar+A_{i,j}+\bar{x}_i\bar{x}_j+1}\Psi\left(f,\alpha\otimes x_{n+1}\cdots \widehat{x}_j \cdots \widehat{x}_i\cdots x_1[x_i,x_j]\right)\\
&=\sum_{i=1}^{n+1}(-1)^{B_i}\Psi(f,x_i\alpha\otimes x_1\cdots \widehat{ x}_i\cdots x_{n+1})\\
&\hspace{5mm}+\sum_{1\leq i<j\leq n+1}(-1)^{B_{i,j}}\Psi\left(f,\alpha\otimes [x_i,x_j] x_1 \cdots\widehat{x}_i\cdots \widehat{x}_j\cdots x_{n+1}\right)\\
&=\Psi_n(f,\partial_{n+1}P).
\end{align*}
This completes the proof.
\endproof

\subsection{Proof of Theorem \ref{main theorem of relcoh}}
The following is well-known.
\begin{proposition}[e.g.\ \cite{Fuks}]
For an arbitrary Lie superalgebra $\g$, and a free $U(\g)$-module $M$, we have $$H_n(\g,M)\simeq \delta_{n,0}M/\g M.$$
\end{proposition}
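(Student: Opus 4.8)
The plan is to identify the Chevalley--Eilenberg chain complex $C_\bullet(\g,M)$ with a complex computing $\mathrm{Tor}$ over $U(\g)$ and then invoke flatness of a free module. Concretely, let $P_\bullet$ be the graded left $U(\g)$-module with $P_n=\mathrm{Sym}^n(\Pi\g)\otimes U(\g)$, equipped with the differential obtained from the formula for $\partial_n$ in \S\ref{pairing} by replacing the $\g$-action on the coefficient module with right multiplication in $U(\g)$; this is the super version of the standard Koszul-type free resolution of the trivial right $U(\g)$-module $\C$. First I would record the isomorphism of complexes
\[
C_\bullet(\g,M)\;\cong\;P_\bullet\otimes_{U(\g)}M ,
\]
which is a direct comparison of the two defining differentials, with the usual Koszul signs matched up, and note that $P_\bullet\to\C$ is a resolution (the augmentation $U(\g)\twoheadrightarrow\C$ has kernel the augmentation ideal, which equals $\g U(\g)$).

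For the acyclicity of $P_\bullet$ in positive degrees I would give a self-contained argument rather than merely cite it. Equip $P_\bullet$ with the filtration induced by the PBW degree filtration on $U(\g)$. The associated graded complex is the super Koszul complex on the supersymmetric algebra $\mathrm{Sym}(\g)\otimes\mathrm{Sym}(\Pi\g)$, built by pairing each generator $x_i$ of the first factor with the corresponding odd generator of the second; this is the even Koszul resolution on $\g_{\bar 0}$ tensored with the odd Koszul complex on $\g_{\bar 1}$, hence its homology is $\C$ concentrated in degree $0$. Since the filtration is bounded below and exhaustive in each fixed internal degree (finite dimensionality of $\g$ keeps every graded piece finite dimensional), the associated spectral sequence degenerates and yields $H_n(P_\bullet)=\delta_{n,0}\C$.

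Finally, since $M$ is free over $U(\g)$ it is in particular flat, so $-\otimes_{U(\g)}M$ is exact and commutes with homology; applying it to $P_\bullet$ gives
\[
H_n\big(C_\bullet(\g,M)\big)\;\cong\;H_n(P_\bullet)\otimes_{U(\g)}M\;\cong\;\delta_{n,0}\,\big(\C\otimes_{U(\g)}M\big)\;=\;\delta_{n,0}\,M/\g M ,
\]
using that $\C\otimes_{U(\g)}M=M/(\g U(\g))M=M/\g M$ because $U(\g)M=M$. An equally short alternative is to observe that $C_\bullet(\g,-)$ is additive in the coefficient variable (the complex is $(-)\otimes_\C\mathrm{Sym}(\Pi\g)$), so homology commutes with the direct sum decomposition of a free module, reducing to $M=U(\g)$; for $M=U(\g)$ one then exhibits the standard explicit contracting homotopy on $C_{>0}(\g,U(\g))$ assembled from multiplications by the $x_i$ and contractions against the $\varphi_i$.

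The only real obstacle is sign bookkeeping: checking that the Koszul signs in the differentials of \S\ref{pairing} agree with those of the super Koszul differential, so that the identification $C_\bullet(\g,M)\cong P_\bullet\otimes_{U(\g)}M$ and the acyclicity of the associated graded complex hold on the nose. There is no conceptual difficulty beyond the Lie-algebra case, and the super PBW theorem supplies everything needed about $\mathrm{gr}\,U(\g)$.
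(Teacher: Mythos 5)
Your proof is correct and follows essentially the same route as the paper's: the decisive step in both is the PBW filtration whose associated graded complex is the super Koszul complex on $\mathrm{Sym}(\g)\otimes\mathrm{Sym}(\Pi\g)$, acyclic except in degree zero. The only difference is organizational --- the paper first reduces to $M=U(\g)$ by writing a free module as an inductive limit of direct sums of copies of $U(\g)$ and then filters $C_\bullet(\g,U(\g))$ directly, whereas you filter the resolution $P_\bullet$ and invoke flatness of $M$; these packagings are interchangeable (modulo the left/right module bookkeeping in $P_\bullet\otimes_{U(\g)}M$, which is part of the sign-checking you already flag).
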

\proof 
We include a sketch of proof for the completeness of the paper. 
Since free modules are inductive limits of direct sums of $U(\g)$ and inductive limit commutes with taking cohomology, we may assume $M=U(\g)$ from the beginning.
Then it is straightforward to see $H_0(\g,U(\g))\simeq U(\g)/\g U(\g)(\simeq \C)$ and thus it suffices to show $H_n(\g,U(\g))=0$ for $n\neq0$. 
By using a PBW filtration $U_{\leq n}(\g)$, we define the following decreasing filtration on the complex  
$$F_p C_\bullet (\g,U(\g))=\bigoplus_{n+m=p}U_{\leq n}(\g)\otimes \mathrm{Sym}^m(\Pi\g).$$
The filtration $F_\bullet C_{\bullet} (\g, U(\g))$ is exhaustive, separated and preserved by the differential. Thus, we have the associated spectral sequence 
$$E_1=H_\bullet (\gr_F C_\bullet(\g,U(\g)),\gr_F\partial)\Rightarrow E_\infty=\gr_F H_\bullet(\g,U(\g))$$
which converges since each $C_n(\g,U(\g))$ is finitely filtered.
Now, $\gr_F C_\bullet(\g, U(\g))= \mathrm{Sym}(\g)\otimes \mathrm{Sym}(\Pi\g)$ with 
$$\gr_F\partial\colon \alpha\otimes x_1\cdots x_n\mapsto \sum_{i=1}^n (-1)^{B_i}x_i\alpha \otimes x_1 \cdots \hat{x}_i\cdots x_n.$$
Then, it is straightforward to show that the spectral sequence collapses at the $r=1$: $E_1^{p,q}\simeq \delta_{(p,q),(0,0)}\C$. This completes the proof.
\endproof
Applying it to the tensor products of Weyl modules as $\lpp{-}\g$-modules, we obtain:
\begin{corollary}\label{homology vanishing}
For $\lambda,\mu\in P_+$ and $k,\ell\in \C$, we have  \\
\textup{(1)} $H_n(\lpp{-}\g,\weyl^k_\lambda)\simeq \delta_{n,0}L_\lambda$,\\
\textup{(2)} $H_n(\lpp{-}\g,\weyl^k_\lambda\otimes \weyl^\ell_\mu)=0$ $(n>0)$.
\end{corollary}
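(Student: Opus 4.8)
The plan is to derive both parts directly from the Proposition just stated, applied with the Lie superalgebra there taken to be $\lpp{-}\g$. The only things one needs are: that the modules in question are free over $U(\lpp{-}\g)$, and, for part (1), an identification of the degree-zero homology.

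First I would treat (1). By the PBW theorem and the vector-superspace decomposition $\widehat{\g}_k=\lpp{-}\g\oplus(\g[t]\oplus\C K)$, the Weyl module $\weyl^k_\lambda=\mathrm{Ind}^{\widehat{\g}_k}_{\g[t]\oplus\C K}L_\lambda$, restricted to $\lpp{-}\g$, is isomorphic to $U(\lpp{-}\g)\otimes_\C L_\lambda$ with $\lpp{-}\g$ acting on the first factor only; in particular it is a free $U(\lpp{-}\g)$-module. (Note that $\lpp{-}\g=\g[t^{-1}]t^{-1}$ inherits no central term inside $\widehat{\g}_k$, so this action does not see the level.) The Proposition then gives $H_n(\lpp{-}\g,\weyl^k_\lambda)\simeq\delta_{n,0}\,\weyl^k_\lambda/\lpp{-}\g\cdot\weyl^k_\lambda$, and since $\lpp{-}\g\cdot(U(\lpp{-}\g)\otimes L_\lambda)$ is the augmentation-ideal part $U(\lpp{-}\g)_{>0}\otimes L_\lambda$, the quotient is $L_\lambda$. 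This proves (1).

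For (2) I would equip $\weyl^k_\lambda\otimes\weyl^\ell_\mu$ with the diagonal $\lpp{-}\g$-action and invoke the standard fact that the tensor product of a free $U(\mathfrak a)$-module with an arbitrary module is again free: for any (super) Lie algebra $\mathfrak a$ and $\mathfrak a$-modules $A,B$, the $\C$-linear map $U(\mathfrak a)\otimes_\C B\to U(\mathfrak a)\otimes_\C B$, $u\otimes b\mapsto\sum u_{(1)}\otimes u_{(2)}b$ (Sweedler notation for the coproduct of $U(\mathfrak a)$), is an isomorphism carrying the left-regular action on the first tensor factor to the diagonal action, with inverse $u\otimes b\mapsto\sum u_{(1)}\otimes S(u_{(2)})b$. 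Combined with (1), this shows $\weyl^k_\lambda\otimes\weyl^\ell_\mu$ is free over $U(\lpp{-}\g)$, so the Proposition yields $H_n(\lpp{-}\g,\weyl^k_\lambda\otimes\weyl^\ell_\mu)=0$ for $n>0$.

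The step with the least obvious content — hence the one I would regard as the ``main obstacle'', though it is still routine — is the freeness of the tensor product in (2), which rests on the Hopf-algebra untwisting above; beyond this the only care needed is that the Proposition is being used for the infinite-dimensional Lie superalgebra $\lpp{-}\g$ and, in (2), for a module of infinite rank, both of which are covered by the Proposition as stated since its proof uses a PBW filtration compatible with inductive limits. I anticipate no genuine difficulty: the substance has been absorbed into the preceding Proposition, and this corollary is its direct application to Weyl modules.
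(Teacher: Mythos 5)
Your proposal is correct and follows the same route as the paper, which simply applies the preceding Proposition to the Weyl modules viewed as $\lpp{-}\g$-modules; you have merely made explicit the two implicit facts (PBW freeness of $\weyl^k_\lambda\simeq U(\lpp{-}\g)\otimes L_\lambda$ and the Hopf-algebra untwisting showing that tensoring a free module with anything stays free). Both fillings-in are the standard ones, so there is nothing to object to.
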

Next, we will show the analogous statement for the cohomology.
\begin{lemma}
For $\lambda\in P_+$ and $k\in \C\backslash \Q$, $\weyl^k_\lambda$ is simple as a $V^k(\g)$-module.
\end{lemma}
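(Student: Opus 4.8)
The plan is to realize $\weyl^k_\lambda$ as a highest weight module over $\widehat{\g}_k$ and to exclude proper submodules by means of the Kac--Kazhdan strong linkage principle, the key point being that at an irrational level no affine root with nonzero $\delta$-component can give a direct link.

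First I would write $\Lambda=k\Lambda_0+\lambda$ and note that $\weyl^k_\lambda=\mathrm{Ind}_{\g[t]\oplus\C K}^{\widehat{\g}_k}L_\lambda$ is the quotient of the affine Verma module $\widehat{M}_\Lambda$ by the submodule generated by the finite singular vectors $f_i^{\langle\lambda,\alpha_i^\vee\rangle+1}v_\Lambda$. Thus $\weyl^k_\lambda$ lies in category $\mathcal{O}$, is graded by the conformal weight $\Delta\in\Z_{\geq0}$, and (by PBW) its conformal-weight-$0$ subspace is precisely the finite dimensional irreducible $\g$-module $L_\lambda$. Next I would invoke the Kac--Kazhdan theorem: if $L(\Lambda')$ is a composition factor of $\widehat{M}_\Lambda$, then there is a chain $\Lambda=\Lambda_0,\Lambda_1,\dots,\Lambda_t=\Lambda'$ with $\Lambda_j=\Lambda_{j-1}-n_j\beta_j$ for positive roots $\beta_j$ of $\widehat{\g}$ and integers $n_j\geq1$ satisfying $2(\Lambda_{j-1}+\widehat{\rho}\mid\beta_j)=n_j(\beta_j\mid\beta_j)$. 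I would then check root by root that, when $k\notin\Q$, every $\beta_j$ has $\delta$-component $0$: the level $(\Lambda_{j-1}+\widehat{\rho}\mid\delta)=k+h^\vee$ is constant along the chain and the finite part of each $\Lambda_{j-1}$ is an integral weight, so a real root $\beta_j=\alpha+m\delta$ with $m\geq1$ would force $m(k+h^\vee)\in\Q$ and an imaginary root $\beta_j=m\delta$ would force $k+h^\vee=0$; both are impossible since $h^\vee\in\Q$ and $k\notin\Q$. Hence every $\beta_j$ is a finite root and $\Lambda'$ is at conformal weight $0$, so all composition factors of $\widehat{M}_\Lambda$ --- and a fortiori of its quotient $\weyl^k_\lambda$ --- are concentrated in conformal weight $0$.

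To conclude, I would use that the conformal-weight-$0$ subspace of $\weyl^k_\lambda$ is the $\g$-irreducible $L_\lambda$: each $\widehat{\g}$-composition factor $L(\Lambda')$ of $\weyl^k_\lambda$ being concentrated in conformal weight $0$, it contributes a nonzero $\g$-submodule to $L_\lambda$, so there can be at most one composition factor, namely $L(\Lambda)$; therefore $\weyl^k_\lambda=L(\Lambda)$ is simple. Equivalently, one can run the argument through the contravariant form on $\weyl^k_\lambda$, whose radical is the maximal proper submodule and whose Gram determinant on each graded piece, by the parabolic (Weyl-module) refinement of the Kac--Kazhdan determinant formula, is a product of factors of the shape $m(k+h^\vee)-r$ with $m\in\Z_{\geq1}$ and $r\in\Q$, hence nonzero for $k\notin\Q$. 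For $\g=\osp_{1|2n}$ the same reasoning goes through once one has the corresponding (super) Kac--Kazhdan formula, noting $h^\vee=n+\tfrac12\in\Q$. I expect the main obstacle to be exactly this super case --- making sure the Kac--Kazhdan machinery, or an adequate substitute such as restriction along $V^{k'}(\symp_{2n})\hookrightarrow V^k(\osp_{1|2n})$ combined with the lowest-weight lemma of \S\ref{tensor representation}, is available for $\widehat{\osp}_{1|2n}$; the non-super part is routine bookkeeping.
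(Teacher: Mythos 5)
Your proof is correct, but it takes a heavier route than the paper. The paper's argument is a direct $L_0$-eigenvalue comparison: a proper graded submodule $N\subset\weyl^k_\lambda$ has a lowest conformal-weight piece $N_{\Delta_\lambda+n}$ with $n\geq 1$ (the degree-zero piece $L_\lambda$ would generate everything), and since that piece is a finite-dimensional integrable $\g$-module annihilated by $\g[t]t$, it contains a $\widehat{\g}$-highest weight vector of some dominant integral weight $\mu$, yielding a nonzero map $\weyl^k_\mu\to\weyl^k_\lambda$; the Sugawara formula then forces $\Delta_\mu-\Delta_\lambda=\bigl((\mu|\mu+2\rho)-(\lambda|\lambda+2\rho)\bigr)/2(k+h^\vee)=n\in\Z_{\geq1}$, which is impossible for $k\notin\Q$ since the left-hand side is either zero or irrational. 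Your Kac--Kazhdan linkage argument rests on exactly the same numerical fact (the linkage condition $2(\Lambda+\widehat{\rho}|\beta)=n(\beta|\beta)$ is the Casimir/conformal-weight matching condition), but routes it through the composition series of the full Verma module, which buys you more information (all composition factors of $\widehat{M}_\Lambda$ sit in conformal degree zero) at the cost of needing the strong linkage principle for the affine superalgebra. The obstacle you flag for $\osp_{1|2n}$ is real but surmountable --- Kac--Kazhdan's original paper covers contragredient Lie superalgebras, and the affine odd real roots $\alpha+m\delta$ of $\widehat{\osp}_{1|2n}$ are non-isotropic, so the same $m(k+h^\vee)\in\Q$ contradiction applies --- yet the paper's version of the argument sidesteps this entirely, since the Sugawara eigenvalue computation is uniform in the super case; this is precisely why the paper can say "the proof is the same as the purely even case." Your closing step (only one composition factor can contribute to the irreducible degree-zero piece $L_\lambda$) is sound.
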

\proof
The proof is the same as the purely even case and thus we only sketch it.
Note that $L_0=\int T(z)zdz$ acts on $\weyl^k_\lambda$ semisimply and the $L_0$-eigenspace decomposition is $\weyl_{\lambda}^k=\bigoplus_{n\in \Z_{\geq0}} \weyl_{\lambda,\Delta_\lambda+n}^k$ with $\Delta_\lambda=\frac{(\lambda|\lambda+2\rho)}{2(k+h^\vee)}$.
Each $\weyl_{\lambda,\Delta_\lambda+n}^k$ is finite dimensional and $\weyl_{\lambda,\Delta_\lambda}^k=L_\lambda$.
Suppose that $\weyl^k_\lambda$ has a proper submodule $N$. Then $N$ must have a highest weight vector $v$ and gives rise to a non-zero homomorphism $\weyl_\mu^k\rightarrow \weyl_\lambda^k$ for some $\mu\in P_+$. 
By considering the $L_0$-eigenvalue of $v$, we have $\Delta_\mu=\Delta_\lambda+n$ for some $n\geq1$, which contradicts $k\in \C\backslash \Q$. This completes the proof.
\endproof
To apply Proposition \ref{condition for pairing}, we use the anti-isomorphism of Lie superalgebras
\begin{align*}
\begin{array}{cccc}
\tran\colon &\g[t^{\pm1}]\oplus \C K & \rightarrow & \g[t^{\pm1}]\oplus \C K\\
& X_{n}&\mapsto & (\iota X)_{-n}\\
& K& \mapsto & K
\end{array}
\end{align*}
where $X_n=Xt^n$ and  $\iota$ is the Chevalley anti-involution of $\g$.
It is obvious that $\tran$ restricts to $\tran\colon \lpp{-}\g\xrightarrow{\simeq} \lpp{+}\g$.
Let $\mathbb{M}^k_\lambda$ denote the Verma module of $\widehat{\g}$ generated by a highest weight vector $v_{\lambda,k}$ of highest weight $\lambda+k \Lambda_0$ and $\mathbb{M}^k_\lambda\twoheadrightarrow \weyl^k_\lambda$ the natural projection. Recall that $\mathbb{M}^k_\lambda$ has a unique bilinear form
$$\Psi_{\lambda,k}\colon\mathbb{M}^k_\lambda\otimes \mathbb{M}^k_\lambda \rightarrow \C,\quad v_{\lambda,k}\otimes v_{\lambda,k}\rightarrow 1$$
which satisfies (P2) in Proposition \ref{condition for pairing}. 
It induces a non-degenerate bilinear form on the simple quotient 
$\Psi_{\lambda,k}\colon \weyl^k_\lambda\otimes \weyl^k_\lambda \rightarrow \C$
for $k\in \C\backslash \Q$.
\begin{proposition}\label{cohomology vanishing}
For $\lambda,\mu\in P_+$ and $k,\ell\in \C\backslash \Q$, we have  \\
\textup{(1)} $H^n(\lpp{+}\g,\weyl^k_\lambda)\simeq \delta_{n,0}L_\lambda$,\\
\textup{(2)} $H^n(\lpp{+}\g,\weyl^k_\lambda\otimes \weyl^\ell_\mu)=0$ $(n>0)$.
\end{proposition}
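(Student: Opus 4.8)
The plan is to deduce both parts from the homology vanishing in Corollary \ref{homology vanishing} by invoking the duality between Chevalley--Eilenberg cochain and chain complexes developed in \S\ref{pairing}, taking the identification $\iota$ there to be the affine Chevalley anti-involution $\tran$. Concretely, one applies Proposition \ref{condition for pairing} with $\g_1=\lpp{+}\g$, $\g_2=\lpp{-}\g$ and the restriction $\tran\colon\lpp{-}\g\xrightarrow{\simeq}\lpp{+}\g$ as $\iota$; condition (P1) is exactly the statement that $\tran$ is an anti-involution of $\widehat{\g}$.

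For part (1), I would take $M_1=M_2=\weyl^k_\lambda$, regarded as a module over $\lpp{+}\g$ and over $\lpp{-}\g$ respectively by restriction, and let $\Psi=\Psi_{\lambda,k}$ be the contravariant form, which is non-degenerate for $k\in\C\backslash\Q$ by simplicity of $\weyl^k_\lambda$ and satisfies (P2) by construction. First I would check that the extended pairing $(\Psi_{\lambda,k})_\bullet$ is itself non-degenerate: the complexes decompose into finite-dimensional conformal-weight components (Remark \ref{decomposition of complexes}), and on each the pairing is perfect, being assembled from the perfect pairing of each conformal-weight space of $\weyl^k_\lambda$ under $\Psi_{\lambda,k}$ together with the perfect pairing $\mathrm{Sym}^n(\Pi\lpp{+}\g)\otimes\mathrm{Sym}^n(\Pi\lpp{-}\g)\to\C$ induced by $\iota$. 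Then Proposition \ref{condition for pairing} and the accompanying lemmas of \S\ref{pairing} yield a non-degenerate pairing $H^n(\lpp{+}\g,\weyl^k_\lambda)\otimes H_n(\lpp{-}\g,\weyl^k_\lambda)\to\C$, so $H^n(\lpp{+}\g,\weyl^k_\lambda)\simeq H_n(\lpp{-}\g,\weyl^k_\lambda)^{*}$; by Corollary \ref{homology vanishing}(1) this vanishes for $n\neq0$ and has dimension $\dim L_\lambda$ for $n=0$. To identify $H^0(\lpp{+}\g,\weyl^k_\lambda)$ with $L_\lambda$ as a $\g$-module, I would note that $\lpp{+}\g$ strictly lowers the conformal weight, so the lowest-weight space $L_\lambda$ lies in $(\weyl^k_\lambda)^{\lpp{+}\g}=H^0(\lpp{+}\g,\weyl^k_\lambda)$, and the dimension count forces equality.

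For part (2), I would run the identical argument with $M_1=M_2=\weyl^k_\lambda\otimes\weyl^\ell_\mu$ carrying the diagonal $\lpp{\pm}\g$-actions and $\Psi=\Psi_{\lambda,k}\otimes\Psi_{\mu,\ell}$, which is non-degenerate as a product of forms non-degenerate on each conformal-weight space and satisfies (P2) since the diagonal action is compatible with the tensor form via the Leibniz rule. This produces a non-degenerate pairing $H^n(\lpp{+}\g,\weyl^k_\lambda\otimes\weyl^\ell_\mu)\otimes H_n(\lpp{-}\g,\weyl^k_\lambda\otimes\weyl^\ell_\mu)\to\C$, whence $H^n(\lpp{+}\g,\weyl^k_\lambda\otimes\weyl^\ell_\mu)\simeq H_n(\lpp{-}\g,\weyl^k_\lambda\otimes\weyl^\ell_\mu)^{*}=0$ for $n>0$ by Corollary \ref{homology vanishing}(2). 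I expect the main obstacle to be the bookkeeping needed to confirm that $(\Psi)_\bullet$ is non-degenerate on the infinite-dimensional complexes and satisfies the compatibility \eqref{compatibility with differentials} with all signs in order; pinning down $H^0$ in part (1) at the level of $\g$-modules rather than merely its dimension is a minor additional point, and everything else is a formal consequence of \S\ref{pairing} and Corollary \ref{homology vanishing}.
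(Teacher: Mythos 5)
Your proposal is correct and follows essentially the same route as the paper: the paper's proof of Proposition \ref{cohomology vanishing} likewise invokes Proposition \ref{condition for pairing} with the anti-isomorphism $\tran\colon \lpp{-}\g\xrightarrow{\simeq}\lpp{+}\g$ and the contravariant forms $\Psi_{\lambda,k}$ (resp.\ $\Psi_{\lambda,k}\otimes\Psi_{\mu,\ell}$), then deduces the cohomology from Corollary \ref{homology vanishing}. Your write-up simply makes explicit the details (non-degeneracy of the extended pairing on conformal-weight components, identification of $H^0$ with $L_\lambda$ as a $\g$-module) that the paper leaves implicit.
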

\proof
 (1) is a direct consequence of Proposition \ref{condition for pairing} and Corollary \ref{homology vanishing}. (2) is also immediate by using the non-degenerate bilinear form
$$\Psi_{\lambda,k}\otimes \Psi_{\mu,\ell}\colon \left(\weyl^k_\lambda\otimes \weyl^\ell_\mu\right)\otimes  \left(\weyl^k_\lambda\otimes \weyl^\ell_\mu\right) \rightarrow \C.$$

\proof[Proof of Theorem \ref{main theorem of relcoh}]
It follows from Proposition \ref{spectral seq for relcoh}, Corollary \ref{homology vanishing} (2) and Proposition \ref{cohomology vanishing} (2) that $\relsemicoh{n}(\g,\mathbb{V}^k_\lambda\otimes \mathbb{V}^\ell_\mu)=0$ ($n\neq0$).
Then, the character of $\relsemicoh{0}(\g,\mathbb{V}^k_\lambda\otimes \mathbb{V}^\ell_\mu)$ is calculated by the Euler--Poincar\'{e} principle \cite{CFL}.
Let $\chi_{L_\lambda}$ denote the character of $L_\lambda$ as a $\g$-module, namely, 
$$\chi_{L_\lambda}(z)=\frac{\sum_{w\in W}(-1)^{\ell(w)}e^{w(\lambda+\rho)-\rho}}{\prod_{\alpha\in\Delta_{+}}(1-(-1)^{\bar{\alpha}}e^{-\alpha})^{\bar{\alpha}}}$$
and set
$$\Pi(z,q):=\prod_{n=1}^\infty(1-q^n)^{\mathrm{rank}\g}\prod_{\alpha\in \Delta_+}(1-(-1)^{\bar{\alpha}}z^\alpha q^n)^{\bar{\alpha}}(1-(-1)^{\bar{\alpha}}z^{-\alpha}q^n)^{\bar{\alpha}}.$$
It follows that 
\begin{align*}
\mathrm{ch}[\weyl^k_\lambda](z,q)=\frac{q^{\frac{(\lambda|\lambda+2\rho)}{2(k+h^\vee)}}\chi_{L_\lambda}(z)}{\Pi(z,q)},\quad 
\mathrm{sch}\left[\relsemi\right](z,q)=\Pi(z,q)^2
\end{align*}
and thus
\begin{align}\label{EP}
\mathrm{sch}\left[\weyl^k_\lambda\otimes \weyl^\ell_\mu\otimes \relsemi\right](z,q)=q^{\frac{|\lambda+\rho|^2-|\mu+\rho|^2}{2(k+h^\vee)}} \chi_{L_\lambda\otimes L_\mu}(z).
\end{align}
Here we have used the relation $k+h^\vee=-(\ell+h^\vee)$.
Since $C^{\frac{\infty}{2}+\bullet}_{\mathrm{rel}}(\g,\weyl^k_\lambda\otimes \weyl^\ell_\mu)=(\weyl^k_\lambda\otimes \weyl^\ell_\mu\otimes \relsemi)^\g$, we have 
$$\mathrm{ch}\left[\relsemicoh{0}(\g,\mathbb{V}^k_\lambda\otimes \mathbb{V}^\ell_\mu)\right](z,q)
=\mathrm{sch}\left[(\weyl^k_\lambda\otimes \weyl^\ell_\mu\otimes \relsemi)^\g\right](z,q).$$
By \eqref{EP}, it follows from the complete reducibility of integrable $\g$-modules that the right-hand side is  
\begin{align*}
\dim \Hom_\g(L_0,L_\lambda\otimes L_\mu) q^{\frac{|\lambda+\rho|^2-|\mu+\rho|^2}{2(k+h^\vee)}}=\delta_{\lambda,\mu^\dagger}q^{\frac{|\lambda+\rho|^2-|\mu+\rho|^2}{2(k+h^\vee)}}.
\end{align*}
Therefore, $\relsemicoh{0}(\g,\mathbb{V}^k_\lambda\otimes \mathbb{V}^\ell_\mu)$ is of one dimension and of the lowest conformal weight of the complex. The corresponding subcomplex is $(L_\lambda\otimes L_\mu)^\g$ with trivial differential, and thus the cohomology is $(L_\lambda\otimes L_\mu)^\g$ itself. Therefore, the assertion follows from Lemma \ref{multi of trivial rep}.
\endproof

\end{document}